\numberwithin{equation}{section}
\theoremstyle{plain}
\newtheorem*{theorem*}{Theorem}
\newtheorem{theorem}{Theorem}
\numberwithin{theorem}{section}
\newtheorem{proposition}[theorem]{Proposition}
\newtheorem{lemma}[theorem]{Lemma}
\newtheorem{corollary}[theorem]{Corollary}
\theoremstyle{definition}
\newtheorem{definition}[theorem]{Definition}
\newtheorem{example}[theorem]{Example}
\newtheorem{remark}[theorem]{Remark}
\newcommand{\Z}{{\mathbb Z}}
\newcommand{\PP}{{\mathbb P}}
\def\codim{{\rm codim}}
\newcommand{\ko}{{\mathcal O}}
\newcommand{\Mg}{{\mathcal M}_G}
\newcommand{\Vc}{{\mathcal V}_{X,G}}
\renewcommand*\env@matrix[1][*\c@MaxMatrixCols c]{%
    \hskip -\arraycolsep
    \let\@ifnextchar\new@ifnextchar
    \array{#1}}
\def\ci{\perp\!\!\!\perp}
\title[{Totally mixed conditional independence equilibria of generic games}]{Totally mixed conditional independence equilibria \\ of generic games}
\author[Matthieu Bouyer]{Matthieu Bouyer}
\address{Ecole polytechnique and Max Planck Institute for Mathematics in the Sciences}
\email{matthieu.bouyer@polytechnique.org} 
\author[Irem Portakal]{Irem Portakal}
\address{Max Planck Institute for Mathematics in the Sciences}
\email{mail@irem-portakal.de} 
\author[Javier Sendra-Arranz]{Javier Sendra-Arranz}
\address{Department of Mathematics, CUNEF Universidad, Madrid, Spain}
\email{javier.sendraarranz@cunef.edu}
\begin{document}

\begin{abstract}
    This paper further develops the algebraic--geometric foundations of conditional independence (CI) equilibria, a refinement of dependency equilibria that integrates conditional independence relations from graphical models into strategic reasoning and thereby subsumes Nash equilibria. Extending earlier work on binary games, we analyze the structure of the associated Spohn CI varieties for generic games of arbitrary format. We show that for generic games the Spohn CI variety is either empty or has codimension equal to the sum of the players' strategy dimensions minus the number of players in the parametrized undirected graphical model. When non-empty, the set of totally mixed CI equilibria forms a smooth manifold for generic games. For cluster graphical models, we introduce the class of Nash CI varieties, prove their irreducibility, and describe their defining equations, degrees, and conditions for the existence of totally mixed CI equilibria for generic games.
\end{abstract}
\maketitle

\section{Introduction}
The concept of dependency equilibria, introduced by Spohn in 2003 \cite{spohn2003dependency}, was first illustrated through the classical Prisoner's Dilemma, where the cooperative outcome emerges as a dependency equilibrium but not as a Nash equilibrium (\cite{nash1950equilibrium}). This notion allows for communication among all players, enabling outcomes that lie beyond classical non-cooperative reasoning. \emph{Conditional independence (CI) equilibria} generalize this idea by permitting only selected subsets of dependencies among players through \emph{undirected discrete graphical models}, a viewpoint particularly natural for certain game structures (Section~\ref{subsec: examples}). While Nash equilibria generically are finitely many, that is not the case for the set of CI equilibria, which often has high dimension. In order to understand their geometry and behavior, we study their algebro-geometric model called \emph{Spohn CI varieties}. Consider, for instance, a generic $2 \times 3$ game: such a game admits no totally mixed Nash equilibrium, that is, no equilibrium where every pure strategy is played with strictly positive probability. Yet, the corresponding algebraic model of dependency equilibria forms a del Pezzo surface of degree two (\cite[Example 10]{portakal2022geometry}). Similarly, one observes a $2 \times 2 \times 2$ game (Example~\ref{ex 2 2-2}) which, when two of the players act in partial coordination, yields totally mixed CI equilibria which Pareto dominate the Nash equilibrium of this game. In this case, the set of CI equilibria forms a smooth curve. These two scenarios motivate the relevance of CI equilibria and their algebro-geometric study in this paper. 

Building on this, the present paper extends the framework of dependency equilibria to conditional independence equilibria, addressing specific limitations of classical non-cooperative game-theoretic models.
Recently in \cite{portakal2024nash,portakal2024game}, such equilibria have been examined for generic binary games, revealing structural properties of their underlying algebro-geometric model Spohn conditional independence varieties. In this paper, we broaden this analysis to games of arbitrary size, exploring their dimensions, degrees, and conditions for non-emptiness, and connect these findings to parametrized undirected graphical models. The results highlight cases where the set of conditional independence equilibria may be empty, in contrast to the binary case, and provide partial answers to open questions such as irreducibility and smoothness of these varieties.\\
\indent In Section~\ref{sec: framework and examples}, we begin with an overview of the classical notion of discrete conditional independence models from undirected graphical models in algebraic statistics. We then introduce the concept of dependency equilibria, presenting its algebraic formulation via the so-called Spohn variety $\mathcal V_X$, and explain its connections to the established notion of Nash equilibria. We recall the original definition of the Spohn CI variety (Definition~\ref{Definition 1.25}) from \cite{portakal2022geometry}, which is obtained by removing all the irreducible components lying in specific hyperplanes from the intersection of $\mathcal V_X$ with the parametrized undirected graphical model $\mathcal M_G$. We then define and illustrate the concept with several detailed examples. In Section~\ref{sec:dim}, we focus on the dimension of Spohn CI varieties for generic games, which was left open except for the case of games with binary choices (\cite[Theorem 9]{portakal2024game}).  Using the monomial parametrization map $\phi_G$ of $\mathcal{M}_G$ (Definition~\ref{def: parametrized CI model}) and removing certain components according to the definition of Spohn CI varieties (which includes a saturation), we derive generators for a variety containing the preimage of the Spohn CI variety under $\phi_G$. Moreover, we analyze its linear system (Lemma~\ref{lemma:gens lin system}) and base locus (Lemma~\ref{lem:base locus}). Finally, by applying Bertini's theorem, we determine the codimension of the Spohn CI variety in $\mathcal{M}_G$ for a generic game $X$ which is independent of $G$.

\begin{theorem}[Theorem~\ref{theo:dimension}, Theorem~\ref{theo:smooth}]
Let $G$ be an undirected discrete graphical model. Then the Spohn CI variety $\mathcal{V}_{X,G} \subseteq \mathcal M_G$ is empty for a generic $d_1 \times \cdots \times d_n$ game $X$, or it has codimension $d_1 + \cdots + d_n - n$ in $\mathcal M_G$ for a generic $d_1 \times \cdots \times d_n$ game $X$. Moreover, for a generic game the set of totally mixed conditional independence equilibria is either empty or a smooth manifold.
\end{theorem}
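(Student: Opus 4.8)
The plan is to transport the problem to the parameter space of the graphical model via the monomial map $\phi_G$ and then apply Bertini's theorem to linear systems whose members are cut out linearly by the generic payoff data $X$. First I would replace $\mathcal{V}_{X,G}$ by its preimage under $\phi_G$. Recall that the Spohn variety $\mathcal{V}_X$ is the determinantal locus where, for every player $i$, the $2\times d_i$ matrix whose first row records the conditional payoff sums $P_{i,k}=\sum_{j:\,j_i=k} X^{(i)}_j\,p_j$ and whose second row records the marginals $Q_{i,k}=\sum_{j:\,j_i=k} p_j$ has rank at most one. On the totally mixed (positive) locus this rank condition is equivalent to the existence of equilibrium payoffs $\lambda_i$ with $P_{i,k}-\lambda_i Q_{i,k}=0$ for all $k$, so the defining equations become linear in $X$ and in $\lambda=(\lambda_1,\dots,\lambda_n)$. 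Substituting $p=\phi_G(\theta)$ turns each $P_{i,k}-\lambda_i Q_{i,k}$ into a form on the parameter space depending linearly on $X$; by Lemma~\ref{lemma:gens lin system} these forms are precisely the generators of explicit linear systems, one family per player, with a generic payoff $X^{(i)}$ selecting a generic member.

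Next I would assemble the codimension count. Each player contributes $d_i-1$ independent rank conditions, so the combined system imposes $\sum_i (d_i-1)=d_1+\cdots+d_n-n$ conditions. The content of the theorem is that for generic $X$ these conditions are in general position on $\mathcal{M}_G$: applying Bertini's theorem successively to the $d_1+\cdots+d_n-n$ divisors singled out by a generic $X$, the successive intersections either drop in dimension by exactly one at each step or become empty, yielding codimension exactly $d_1+\cdots+d_n-n$ (or emptiness) away from the base locus. The emptiness alternative is genuinely present here, in contrast to the full Spohn variety which always attains this codimension, because restricting to $\mathcal{M}_G$ can leave no room for the intersection once the base locus is excised; this is exactly the phenomenon the saturation in Definition~\ref{Definition 1.25} encodes.

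The smoothness statement then follows from the same argument applied on the totally mixed locus. By Lemma~\ref{lem:base locus} the base locus of the combined linear system is contained in the union of coordinate hyperplanes, i.e.\ in the non-totally-mixed region where some $p_j=0$. Consequently, on the open torus where every coordinate is positive the system is base-point free, and Bertini guarantees that the intersection cut out by a generic $X$ is smooth there. Since on the positive locus $\phi_G$ is a submersion onto the smooth positive part of $\mathcal{M}_G$, and the preimage of a subvariety under a submersion is smooth if and only if the subvariety is, smoothness of the preimage descends to smoothness of the totally mixed CI equilibria in $\mathcal{M}_G$. Working over $\mathbb{R}$, full rank of the Jacobian at each totally mixed point lets the implicit function theorem present the positive real solutions as a smooth manifold of the asserted codimension.

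The main obstacle I anticipate is the base-locus analysis underlying Lemma~\ref{lem:base locus}: one must show that the common zeros of the pulled-back minors that persist independently of $X$ lie entirely on the coordinate hyperplanes, so that no spurious base point intrudes on the totally mixed region and corrupts the Bertini conclusion. Controlling this base locus --- and checking that the saturation in the definition of $\mathcal{V}_{X,G}$ removes precisely the components supported on these hyperplanes --- is the delicate step; once it is in place, the general-position count and generic smoothness are formal consequences of Bertini over a field of characteristic zero.
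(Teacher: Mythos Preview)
Your high-level strategy matches the paper's: pull back through $\phi_G$, identify the defining minors as members of linear systems depending linearly on $X$, and apply Bertini. However, there is a genuine gap in your base-locus analysis, and it is precisely the step you flag as ``delicate''.

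You assert that by Lemma~\ref{lem:base locus} the base locus is contained in the union of coordinate hyperplanes $\{p_j=0\}$. This is not what the lemma says. For a nonisolated vertex $k$ and fixed slice $X^{(k)}_a$, the base locus $B^{(k)}_{a,b}(X^{(k)}_a)$ has three pieces: $\mathbb{V}(\mathfrak{S}^{(k)}_{j,b}:j\in\mathscr{C}_k)$, which does lie in coordinate hyperplanes, but also $\mathbb{V}(L^{(k)}_a,\mathcal{G}^{(k)}_a)$ and $\mathbb{V}(L^{(k)}_a,L^{(k)}_b)$, which lie only in the marginal hyperplane $\{L^{(k)}_a=0\}$, i.e.\ in the pullback of $\{p_{+\cdots+a+\cdots+}=0\}$, not in any $\{p_j=0\}$. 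So a single application of Bertini leaves uncontrolled components on these marginal loci. The paper's key step, which your sketch omits, is to vary the anchor row: for each choice $\mathbf{a}=(a_k)_k\in\mathcal{R}_{[n]}$ one gets a variety $Y_{X,G,\mathbf{a}}$ with base locus $B_{\mathbf{a}}$, and the actual variety $Y_{X,G}$ is the intersection $\bigcap_{\mathbf{a}}Y_{X,G,\mathbf{a}}$. One must then argue that the common base locus $B=\bigcap_{\mathbf{a}}B_{\mathbf{a}}$ lands in the pullback of $\{p_{+\cdots+}=0\}$: if a point $p$ avoided $\mathbb{V}(L^{(k)}_a:a\in[d_k])$ for every $k$, one could choose $\mathbf{a}$ with $L^{(k)}_{a_k}(p)\neq 0$ for all $k$ and derive a contradiction from the structure of $B_{\mathbf{a}}$. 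This combinatorial argument over all $\mathbf{a}$ is the heart of the proof and is absent from your proposal.

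A smaller point: your reformulation with auxiliary payoffs $\lambda_i$ is not the route the paper takes. The paper works directly with the $2\times2$ minors and, crucially, \emph{fixes} the slice tensor $X^{(k)}_a$ to obtain the linear system $\Lambda^{(k)}_{a,b}(X^{(k)}_a)$, which is why the base locus depends on which row is anchored and why the intersection over all anchors $\mathbf{a}$ is needed. Your $\lambda$-formulation could perhaps be made to work, but you would still need an analogous argument to confine the residual base locus to $\mathcal{W}$, and that argument is not supplied.
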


In Section~\ref{sec:saturation}, we explore that for generic games, the Spohn CI varieties can be defined equivalently through an alternative construction, using different hyperplanes for the saturation appearing in the definition of Spohn varieties. Section~\ref{sec:nash ci} focuses on the special family of graphical models whose associated graph is a cluster graph. A cluster graph is a disjoint union of $k$ cliques for $k\in\mathbb{N}$. We denote by $D_i$ the product of pure strategies of each player in the $i$th clique. Then the parametrized discrete undirected graphical model is the Segre variety $\Mg=\PP^{D_1-1}\times\cdots\times\PP^{D_k-1}$. This is the case where players are acting together in groups and as Nash equilibria can be seen as the trivial case (disjoint union of zero dimensional cliques), we call the Spohn CI variety in this case the \textit{Nash CI variety}. We determine the defining equations of the Nash CI variety in Proposition~\ref{prop:eq Nash CI} and prove that the Nash CI variety is irreducible for generic games in Theorem~\ref{theo: irreducibility}. The former result allows us to study their Chow classes and determine their degrees in Proposition~\ref{prop:degree Nash} and the sufficient and necessary condition when the Nash CI variety is nonempty for a generic game.
\begin{theorem}[Theorem~\ref{theo:emptyness}]\label{theorem 1.2}
    Let $\mathcal S$ be the set of isolated vertices of a cluster graph $G$. Then, the Nash CI variety is nonempty for a generic $d_1 \times \cdots \times d_n$ game if and only if
    \begin{equation}\label{eq: final nonempty1}
    d_i\leq1+\frac12 \sum_{l=1}^k(D_l-1)
    \end{equation}
    for every $i\in\mathcal S$.
\end{theorem}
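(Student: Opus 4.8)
The plan is to work on the dense open locus of totally mixed points, where by Proposition~\ref{prop:eq Nash CI} the defining generators of $\Vc$ simplify, and then to translate non-emptiness into the non-vanishing of a product of divisor classes on the Segre product $\Mg=\PP^{D_1-1}\times\cdots\times\PP^{D_k-1}$. First I would record the shape of the generators. Because a point of $\Mg$ corresponds to a factorization $p=\prod_l q^{(l)}$ of the joint distribution over the cliques, for an isolated vertex $i$ the $2\times2$ minors of the payoff matrix of player $i$ split off the factor $q^{(i)}_s q^{(i)}_t$; on the totally mixed locus the generators attached to $i$ therefore become the $d_i-1$ differences $A^{(i)}_s-A^{(i)}_t$ of conditional expected payoffs. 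Each such difference is multilinear of multidegree $\mathbf{1}$ in the factors $l\ne i$ and does \emph{not} involve $q^{(i)}$; as $X$ varies its coefficient matrix ranges over all matrices, so for generic $X$ these are generic members of $|\mathcal O(1,\dots,1)|$ on $\prod_{l\ne i}\PP^{D_l-1}$. By contrast, the generators attached to a player inside a nontrivial clique $C$ have strictly positive degree in every factor, hence define ample classes.

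For necessity, let $\pi_i\colon\Mg\to\prod_{l\ne i}\PP^{D_l-1}$ forget the $i$-th factor. Since the generators attached to $i$ vanish on $\Vc$ and are pulled back along $\pi_i$, we have $\pi_i(\Vc)\subseteq Z_i$, where $Z_i\subseteq\prod_{l\ne i}\PP^{D_l-1}$ is the common zero locus of the $d_i-1$ forms $A^{(i)}_s-A^{(i)}_t$. These are generic linear sections of the Segre variety $\prod_{l\ne i}\PP^{D_l-1}$, which has dimension $\sum_{l\ne i}(D_l-1)$, so $Z_i$ is non-empty precisely when $d_i-1\le\sum_{l\ne i}(D_l-1)$. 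Thus if $\Vc$ is non-empty this inequality holds for every $i\in\mathcal S$, and since $D_i-1=d_i-1$ it rearranges to $2(d_i-1)\le\sum_{l}(D_l-1)$, which is exactly \eqref{eq: final nonempty1}.

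For sufficiency, by Theorem~\ref{theo:dimension} and Theorem~\ref{theo: irreducibility} the variety $\Vc$ is, for generic $X$, either empty or irreducible of codimension $d_1+\cdots+d_n-n$, equal to the number of defining generators; hence $\Vc$ is non-empty if and only if the product of the classes of these generators is non-zero in $A^*(\Mg)=\bigotimes_l\Z[h_l]/(h_l^{D_l})$. By the class computation of Proposition~\ref{prop:degree Nash} this product equals
\[
\prod_{i\in\mathcal S}\Bigl(\sum_{l\ne i}h_l\Bigr)^{d_i-1}\;\prod_{C}\Bigl(2h_C+\sum_{l\ne C}h_l\Bigr)^{\sum_{i\in C}(d_i-1)},
\]
where $C$ ranges over the nontrivial cliques. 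The clique factors are ample and contribute positively in every variable, so they cannot create an obstruction; the only way vanishing can occur is through an isolated factor $\bigl(\sum_{l\ne i}h_l\bigr)^{d_i-1}$, which is non-zero in $\bigotimes_{l\ne i}\Z[h_l]/(h_l^{D_l})$ exactly when $d_i-1\le\sum_{l\ne i}(D_l-1)$. Assuming \eqref{eq: final nonempty1} for all $i\in\mathcal S$, it then remains to prove that the \emph{full} product is non-zero, e.g.\ by exhibiting one monomial $\prod_l h_l^{a_l}$ with $a_l\le D_l-1$ surviving in the expansion.

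The main obstacle is precisely this last combinatorial non-vanishing. The isolated generators are only nef, since $\sum_{l\ne i}h_l=H-h_i$ has degree zero in its own factor, so several of them compete for the same bounded exponents $a_l\le D_l-1$, and the minus signs permit cancellation between monomials; one must show that the per-vertex conditions \eqref{eq: final nonempty1} are not merely necessary but \emph{jointly} sufficient, i.e.\ that the degenerate directions attached to distinct isolated vertices do not interfere. I expect to handle this either by a careful choice of surviving monomial together with a sign-free coefficient count, or geometrically: intersect the ample clique divisors first, so that Bertini yields an irreducible variety surjecting onto each $\prod_{l\ne i}\PP^{D_l-1}$ under $\pi_i$, and then pull back the generic sections cutting out $Z_i$, so that the non-emptiness of every $Z_i$ propagates to $\Vc$.
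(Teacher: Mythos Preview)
Your necessity argument is clean and correct, and in fact more transparent than the paper's: projecting away the factor of an isolated vertex $i$ and observing that the $d_i-1$ generators attached to $i$ are generic pullback sections of $\mathcal O(1,\dots,1)$ on $\prod_{l\ne i}\PP^{D_l-1}$ gives the bound $d_i-1\le\sum_{l\ne i}(D_l-1)$ immediately.

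The sufficiency direction, however, is not proved. You correctly reduce to showing that the Chow class is nonzero and correctly isolate the difficulty (the isolated factors are only nef), but you stop at ``I expect to handle this\dots''. Two remarks on the setup:
\begin{itemize}
\item Your displayed class for a player $(i,j)$ in a nontrivial clique is wrong beyond the binary case: Porteous gives $\sum_{l=0}^{d_{i,j}-1}x_i^{\,l}\bigl(\sum_u x_u\bigr)^{d_{i,j}-1-l}$, not $(2x_i+\sum_{l\ne i}x_l)^{d_{i,j}-1}$. Fortunately both polynomials have the same support (all degree--$(d_{i,j}-1)$ monomials with positive coefficients), so your qualitative claim that these factors impose no constraint survives.
\item Your worry that ``the minus signs permit cancellation'' is misplaced. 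Written in the basis $h_1,\dots,h_k$, every factor---including $\widehat{x_i}=\sum_{l\ne i}h_l$---has only nonnegative coefficients. So vanishing in $A^*(\Mg)$ is a pure \emph{support} question: does some monomial $\prod_l h_l^{a_l}$ with $a_l\le D_l-1$ appear?
\end{itemize}

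This is exactly how the paper completes the argument. It first characterizes the support of the class: a monomial $x^\alpha$ of total degree $\sum_l(S_l-n_l)$ appears with positive coefficient if and only if $\alpha_i\le\sum_l(S_l-n_l)-(d_i-1)$ for every $i\in\mathcal S$ (no constraint from the clique factors). The problem then becomes: is there such an $\alpha$ with $\alpha_l\le D_l-1$ for all $l$? The key combinatorial observation is that at most one isolated vertex $a$ can satisfy $2(d_a-1)>\sum_l(S_l-n_l)$; if none does, the monomial $\prod_l x_l^{S_l-n_l}$ already works (using $S_l-n_l\le D_l-1$), and if exactly one does, one sets $\alpha_a$ to its maximum allowed value and checks that the remaining degree fits into the clique factors precisely when \eqref{eq: final nonempty1} holds. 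Your proposed geometric alternative (intersect the ample divisors first, then pull back) does not sidestep this: after cutting by the clique divisors you must still intersect the pullbacks $\pi_i^{-1}(Z_i)$ for \emph{all} $i\in\mathcal S$ simultaneously, and showing this intersection is nonempty is the same joint combinatorial problem.
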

Combined with the filtration of the Spohn CI variety in Lemma~\ref{lem:inclusion} and Corollary~\ref{cor:emptiness}, this theorem provides a characterization of the emptiness for graphs beyond just cluster graphs. The same inequality \eqref{eq: final nonempty1}, though with slightly different representation of the parameters, also appears in \cite[Proposition 2.3, Chapter 13]{Gelfand1994} concerning the discriminant of a multihomogeneous form defined on the product of projective spaces $\Mg$. This inequality also generalizes the condition known for totally mixed Nash equilibria, namely $d_i -1 \leq \sum_{j \neq i} (d_j -1)\text{, for all } i \in [n]$
as shown in \cite[Theorem 2.7]{abo2025vector}.
\\
\indent Next, in Example~\ref{ex: CI beats Nash}, we present a $4 \times 2 \times 2$ game that does not admit any totally mixed Nash equilibria. However, the Nash CI variety of the one-edge graph, where vertices 2 and 3 are connected, is a line that intersects the interior of the probability simplex, implying the existence of totally mixed CI equilibria. The examples along the paper show that conditional independence equilibria can exist even when Nash equilibria do not, and that CI equilibria can give better expected payoff than Nash equilibria, highlighting the value of studying both equilibrium concepts. We conclude our paper with Appendix~\ref{appendix: degree CI} on the degree formula for certain discrete conditional independence models that may be useful for the future study of the degree formula for Spohn CI varieties in full generality.

\section{Framework and examples}\label{sec: framework and examples}
The goal of this section is to introduce \emph{conditional independence (CI) equilibria} for finite normal-form games. We begin with the definitions and notations for undirected graphical models arising in algebraic statistics. We then present the concept of \emph{Nash} and \emph{dependency equilibria}, where the latter is a new equilibrium notion in game theory that forms the basis of our investigation. Finally, we define CI equilibria as a natural generalization of Nash equilibria alongside its algebro-geometric model \emph{Spohn CI variety}.

\subsection{Discrete conditional independence models}
\label{subsec CI} We refer to \cite[Chapter 4, Chapter 13]{sullivant2018algebraic} for more details. Let $\mathcal{X}_1,\mathcal{X}_2,\ldots,\mathcal{X}_n$ be $n$ discrete random variables such that each $\mathcal{X}_k$ has state space $[d_k]:=\{1, \cdots, d_k\}$, for $k \in [n]$. For $S\subseteq [n]$ we use the notation $\mathcal{X}_S=(\mathcal{X}_k)_{k\in S}$ with state space $\mathcal R_S=\prod_{k\in S}[d_k]$. We denote by $p:=(p_{i_1i_2\cdots i_n})$ the joint distribution of the random variables $\mathcal{X}_k$. For any coordinate $i\in[n]$, replacing it by a ``$+$" means that we sum over this coordinate. Moreover, if $A_1,A_2,\ldots,A_r$ are $r$ disjoint subsets of $[n]$, we denote the marginal of $(\mathcal{X}_{A_i})_{1\le i\le r}$ by $(p_{i_{A_1},i_{A_2},\ldots,i_{A_r},+})$. We regard $p$ as a point in the probability simplex $\overline\Delta_{d_1 d_2\cdots d_n-1}\subset\mathbb P^{d_1 d_2 \cdots d_n-1}$. 
We finally consider an undirected graph $G$ on $[n]$, and each random variable $\mathcal{X}_k$ is associated to the corresponding vertex $k$ of $G$.
Conditional independence typically occurs when statistics try to derive different data samples from the same (hidden) parameters we want to know more about, because there is a certain causal inference playing a role.

\begin{definition}\label{Definition 1.2}
 For three pairwise disjoint subsets $A,B,C$ of $[n]$, we say that \textit{$\mathcal{X}_A$ is conditionally independent of $\mathcal{X}_B$ given $\mathcal{X}_C$} and we write $\mathcal{X}_A\ci \mathcal{X}_B\mid \mathcal{X}_C$ when $\forall(i_A,i_B,i_C)\in\mathcal R_A \times\mathcal R_B\times\mathcal R_C$,
\begin{align*}
   \Pr(\mathcal{X}_A=i_A,\mathcal{X}_B=i_B\mid \mathcal{X}_C=i_C)=\Pr(\mathcal{X}_A=i_A\mid \mathcal{X}_C=i_C)\cdot \Pr(\mathcal{X}_B=i_B\mid \mathcal{X}_C=i_C).
\end{align*}
The statement $\mathcal{X}_A\ci \mathcal{X}_B\mid \mathcal{X}_C$ is called a \textit{conditional independence statement}.   
\end{definition}

The discrete random variables are players in our context. For a subset of vertices representing players $A \subseteq [n]$, we let $\mathcal{R}_A : = \prod_{a \in A} [d_a]$ to be the set of pure strategy profiles for $A$. By \cite[Proposition $4.1.6$]{sullivant2018algebraic}, the CI statement $\mathcal{X}_A \ci \mathcal{X}_{B} \ | \ \mathcal{X}_{C}$ holds if and only if  \begin{equation}\label{eq: independence ideal} p_{i_A i_B i_C +} p_{j_A j_B i_C +} - p_{i_A j_B i_C+} p_{j_A i_B i_C +} = 0 \end{equation} for all $i_A, j_A \in \mathcal{R}_A$, $i_B, j_B \in \mathcal{R}_B$ and $i_C, j_C \in \mathcal{R}_C$. This means the set of CI statements translates into a system of homogeneous quadratic polynomial equations in the entries of the joint probability distribution. The complex projective variety corresponding to these equations is called the \textit{discrete conditional independence (CI) model}. For an undirected graph $G$ we define the \textit{global Markov statements} as the ones of the following form: 
$\mathcal{X}_A \ci \mathcal{X}_B\mid \mathcal{X}_C$ when $A,B,C$ are disjoint subsets of $[n]$ such that ``$C$ separates $A$ and $B$", which means that all paths from $A$ to $B$ in $G$ go through $C$. We denote the set of all global Markov statements by $\mathrm{global}(G)$. We define $\mathcal{M}_{\mathrm{global}(G)} \subseteq \mathbb P^{d_1 \cdots d_n -1}$ to be the projective variety defined by all probability distributions satisfying the equations (\ref{eq: independence ideal}) for all CI statements in $\mathrm{global}(G)$.
Aside from $\mathrm{global}(G)$, there are other graphical models arising from other CI statements. Namely, the pairwise and local Markov properties. However, since we restrict our attention to strictly positive joint probability distributions i.e.\ $p \in \Delta_{d_1 \cdots d_n -1}$, these three graphical models coincide.
Another equivalent definition by the Hammersley-Clifford theorem is the parametrized undirected graphical model.

\begin{definition}\label{def: parametrized CI model}
   Let $\mathcal C(G)$ be the set of all maximal cliques of the graph $G$. A point $p\in\mathbb P^{d_1 d_2 \cdots d_n-1}$ is said to \textit{factorize according to $G$} if it can be decomposed: $$(p_{j_1 j_2\cdots j_n})_{j_1,j_2,\ldots,j_n}=\bigg(\prod_{C\in \mathcal C(G)}\sigma^{(C)}_{j_C}\bigg)_{j_1,j_2,\ldots,j_n}$$ for some $\sigma^{(C)}\in\mathbb C^{\mathcal R_C}$ for $C\in \mathcal C(G)$. The \textit{parametrized undirected graphical model} $\mathcal M_G$ is the projective toric variety in $\mathbb P^{d_1 d_2 \cdots d_n-1}$ with respect to this monomial parametrization. 
\end{definition}
The intersection $\mathcal M_G\cap\Delta_{d_1 \cdots d_n -1}$ is also called the hierarchical log-linear model associated to the simplicial complex of cliques in $G$. We investigate the degree of these varieties for certain classes of graphs in Appendix~\ref{appendix: degree CI}.

\subsection{Nash and dependency equilibria}

The notion of dependency equilibria was introduced by the philosopher Wolfgang Spohn in \cite{spohn2003dependency} and in \cite{spohn2007dependency} with several detailed examples. These are a new type of game theory equilibria generalizing the (totally) mixed Nash equilibria by adding some dependencies for the choices of the players in another way than Aumann's correlated equilibria do \cite{aumann1974subjectivity, brandenburg2025combinatorics}.

We consider a normal-form finite $n$-player game, the strategies of the $k$-th player being $[d_k]$. Each player is given a real payoff tensor $X^{(k)}\in\mathbb R^{d_1 \times \cdots \times d_n}$ rewarding them according to their strategic choice and the ones of the other players. They may not have deterministic (also called pure) strategies, and in fact, we will focus on mixed strategies, i.e., probabilistic choices of options in $[d_k]$ for each player. In our setting, the players do not act independently: the set of mixed strategies is modeled by a joint distribution $p \in \overline\Delta_{d_1 d_2\cdots d_n-1}$. Its marginals represent the subjective strategy of each player, but there may be some correlation between them. The expected payoff of the $k$-th player is given by
\[
p \cdot X^{(k)} = 
\sum_{j_1 = 1}^{d_1} \cdots \sum_{j_n = 1}^{d_n}
X^{(k)}_{j_1 \dots j_n}\, p_{j_1 \dots j_n}.
\]

\noindent A mixed strategy $p \in \overline{\Delta}_{d_1 \cdots d_n -1}$ is called a \textit{Nash equilibrium} when no player can increase their expected payoff by deviating from their own mixed strategy, assuming that the other players' mixed strategies are fixed. A Nash equilibrium exists in every finite game \cite{nash1950equilibrium}, making it the most extensively studied equilibrium concept in game theory. Despite its foundational importance, the Nash equilibrium has notable limitations and does not fully capture all aspects of strategic interaction. In particular, it assumes that players act independently, which in real-life situations is often not the case; it may induce errors of modelling, hence erratic predictions.
The prisoner's dilemma is probably the most famous example of the paradoxical social situations coming from Nash's game-theoretic perspective. In this example, both concepts of Nash equilibrium and correlated equilibrium (see \cite[Section 3]{spohn2007dependency}) fail.

\begin{example}\label{Example $1.16$}
    Two suspects committed a crime together and were arrested by the police. However, the police have no evidence against them and need their confessions. Hence, the prisoners, that are held in two separate cells, are given two options: either keep silent (cooperate), or betray their accomplice by denouncing them (defect). They know that if they both keep silent, they will be free. Otherwise, the culprits will share a fine of $2000$ euros. But in order to encourage the prisoners to speak, the police promise to reward any non-guilty prisoner speaking with $1000$ euros. Labelling ``cooperate"  with $1$ and ``defect" with $2$, we present the payoff matrices as follows. 
\[
\begin{array}{ccc}
X^{(1)}=\begin{pmatrix}
    0&-2\\1&-1
\end{pmatrix} & \text{ and } &X^{(2)}=\begin{pmatrix}
    0&1\\-2&-1
\end{pmatrix}
\end{array}
\]
\end{example}

Whatever the other player does, the reward is better if one betrays. From the point of view of a lonely player, forgetting about the existence of the other player, it seems better to betray, although both cooperating would be a better option for both if they could trust each other. There is only one Nash equilibrium where both defect. In social situations, however, trust can lead to a better option, where both cooperate. Hence, the independence of the players is not the only problematic assumption in Nash's modelling. It is also a problem that players optimize the expected payoff assuming (wrongly) that the others' strategies are fixed, whereas they will also change at the same time in real social situations. This enables players to obey a different form of (collective) rationality than the one that is assumed in Nash equilibria. Spohn introduced dependency equilibria to address this issue: \emph{cooperate-cooperate must remain a rational possibility \cite{spohn2003dependency}}.

Dependency equilibria modify the two assumptions from the definition of Nash equilibria:
players do not act independently anymore, and they maximize their \textit{conditional} expected payoff. For $p \in \Delta_{d_1 \cdots d_n -1}$, the conditional expected payoff is defined for each player $k\in[n]$ and each strategy $i\in[d_k]$ by:
\[
\mathbb E_p\big(X^{(k)}\mid  k\text{ plays }i\big)=\sum_{j\in[d_1]\times[d_2]\times\cdots\times[d_n]}\text{Pr}(\text{``all play }j\text{"}\mid k\text{ plays }i)\cdot X^{(k)}_{j_1\cdots j_n}=\frac{\big(p\cdot_k X^{(k)}\big)_i}{p_{++\cdots+i+\cdots+}},
\]
where $ p\cdot_kX^{(k)}$ is the vector
\[
\displaystyle p\cdot_kX^{(k)}=\left(   \sum_{j_1\in [d_1]}\cdots \widehat{\sum_{j_k\in[d_k]}}\cdots \sum_{j_n\in [d_n]}  X^{(k)}_{j_1\cdots i\cdots j_n}p_{j_1\cdots i\cdots j_n} \right)_{i\in[d_k]}.
\]
Here, the $\widehat{ ... }$ corresponds to the only sum or term removed in the sequence of sums.

\begin{definition}\label{Definition $1.17$}
A probability distribution $p\in \Delta_{d_1 \cdots d_n -1}$ is a \textit{dependency equilibrium} when each player maximizes their conditional expected payoff with whatever they do with positive probability according to $p$. In other words, $p$ is a dependency equilibrium if and only if for each player $k$ and each strategy $i,j\in[d_k]$ such that $k$ plays $i$ with positive probability ($p_{++\cdots+i+\cdots+}>0$), $\mathbb E_p\big(X^{(k)}\mid k\text{ plays }i\big)\ge\mathbb E_p\big(X^{(k)}\mid k\text{ plays }j\big)$.
\end{definition}

In a dependency equilibrium, for every $k\in[n]$, $\mathbb E_p\big(X^{(k)}\mid k\text{ plays }i\big)$ does not depend on $i$ (for all $i$ such that $p_{++\cdots+i+\cdots+}\ne0$), so players have no strict preference of this strategy. Each strategy with strictly positive weight from the player has to have the same conditional expected payoff, which is then also equal to the usual notion of expected payoff $p\cdot X^{(k)}$ (\cite{portakal2022geometry}, Lemma $15$). Spohn's definition has a technical flow because the notion of conditional expected payoff relies on $p_{++\cdots+j+\cdots+}\ne0$. In fact, there are several ways to extend the definition in the non-negative real numbers when probabilities vanish: for this, see \cite{portakal2024dependency}, which formalizes mathematically an idea from \cite{spohn2007dependency}. These difficulties are the reason why later in this paper, we only focus on totally mixed dependency equilibria, i.e., $p\in \Delta_{d_1 \cdots d_n -1}$.

\begin{example}\label{Example $1.18$}
 While Nash equilibria fail in enabling win-win trust in Example~\ref{Example $1.16$}, dependency equilibria do not since they are given by two components $p = \begin{pmatrix}
    p_{11} & p_{12}\\
    p_{21} & p_{22}
\end{pmatrix} \in \overline{\Delta}_3$:\begin{align*}
    &~~~\begin{pmatrix}
    \frac{t(1+t)}2 & \frac{t(1-t)}2\\
    \frac{t(1-t)}2 & \frac{(1-t)(2-t)}2
\end{pmatrix},\text{ for } 0\le t\le1 \ \ 
\text{ and }\ \ \begin{pmatrix}
    \frac{3(1-t^2)}8 & \frac{(1-t)(1-3t)}8\\
    \frac{(1+t)(1+3t)}8 & \frac{3(1-t^2)}8
\end{pmatrix},\text{ for } -\frac13\le t\le\frac13.
\end{align*}   
\end{example}

The first component contains the unique Nash equilibrium and also a dependency equilibrium where both players cooperate. Conditional expected payoff adds reflexivity in the decision, enabling, in this case, the win-win solution. Interpreting it more precisely is done in detail in \cite{spohn2003dependency}, as part of the broader philosophical field called reflexive decision theory. There is some kind of past or history known by the players, enabling them to simultaneously anticipate the others' behaviors. They both interpret themselves and the others' choices (not assumed to be fixed) and take them into account in their decision, also knowing that the others do the same. The founding paper \cite{portakal2022geometry} introduces a better mathematical framework for the study of dependency equilibria.

\begin{definition}\label{Definition $1.22$}
   The \textit{Spohn variety} $\mathcal V_X\subseteq\mathbb P^{d_1 d_2 \cdots d_n-1}$ of a game $X = (X^{(1)}, \cdots, X^{(n)})$ is defined by the vanishing set of the $2\times2$ minors of the $d_k\times2$ matrices: $$M^{(k)}=\left(\begin{matrix}
    p_{++\cdots+1+\cdots+} & &\big(p\cdot_k X^{(k)}\big)_{1}\\
    p_{++\cdots+2+\cdots+} & &\big(p\cdot_k X^{(k)}\big)_{2}\\
    . & &.\\
    .&&.\\
    .&&.\\
    p_{++\cdots+d_k+\cdots+} && \big(p\cdot_k X^{(k)}\big)_{d_k}\\
\end{matrix}\right),~\text{for all }k\in[n].$$ 
\end{definition}

\noindent In other words, the Spohn variety is the intersection of $n$ determinantal varieties (one per player), given by the maximal minors of $M^{(k)}$. By construction, the intersection of the probability simplex with the Spohn variety is the set of dependency equilibria.

\begin{theorem}[{\cite[Theorem $6$]{portakal2022geometry}, \cite[Theorem $3.18$]{portakal2024dependency}}]\label{Proposition $1.19$}\label{Proposition $1.20$}
    The Spohn variety $\mathcal V_X$ has codimension $d_1 + \cdots +d_n -n$ and degree $d_1 \cdots d_n$ for a generic game $X$. Totally mixed Nash equilibria are the dependency equilibria lying on the intersection of $\mathcal V_X$ with the Segre variety $\mathbb P^{d_1 -1} \times\cdots\times \mathbb P^{d_n -1}$ in $\Delta_{d_1 \cdots d_n -1}$. Moreover, every Nash equilibrium of $X$ lies on the Spohn variety $\mathcal V_X$.
\end{theorem}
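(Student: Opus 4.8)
The plan is to handle the four assertions in turn: the codimension and degree come from analyzing $\mathcal{V}_X$ player-by-player as an intersection of determinantal loci and then multiplying their classes, while the two Nash statements follow from the geometry of the Segre variety together with a support argument. Throughout I write $N = d_1 d_2 \cdots d_n - 1$ and let $H$ denote the hyperplane class on $\mathbb{P}^N$.

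For the dimension and degree, first I would record that $\mathcal{V}_X = \bigcap_{k=1}^n \mathcal{V}^{(k)}$, where $\mathcal{V}^{(k)} \subseteq \mathbb{P}^N$ is cut out by the $2\times 2$ minors of $M^{(k)}$. The essential point is that every entry of $M^{(k)}$ is a linear form in the coordinates $p_{j_1\cdots j_n}$: the first column collects the marginals of player $k$ and the second column $p\cdot_k X^{(k)}$ is linear in $p$ for fixed $X$. Thus $\mathcal{V}^{(k)}$ is the rank-$\le 1$ degeneracy locus of a $d_k\times 2$ matrix of linear forms, equivalently the preimage under a linear map $\ell_k\colon \mathbb{P}^N \dashrightarrow \mathbb{P}^{2d_k-1}$ of the determinantal variety of $d_k\times 2$ matrices of rank $\le 1$. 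The latter is the cone over the Segre embedding $\mathbb{P}^{d_k-1}\times\mathbb{P}^1$; it has codimension $d_k-1$ and degree $\binom{d_k}{d_k-1}=d_k$. For generic $X$ the linear forms in the two columns are independent enough that $\ell_k$ is sufficiently general, so $\mathcal{V}^{(k)}$ has codimension $d_k-1$, and by Thom--Porteous its class is $[\mathcal{V}^{(k)}] = d_k\,H^{\,d_k-1}$.

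Next I would intersect the factors. If $\bigcap_k \mathcal{V}^{(k)}$ is \emph{proper}, meaning it attains the expected codimension $\sum_k(d_k-1) = d_1+\cdots+d_n-n$, then its class is the product $\prod_k d_k\,H^{\,d_k-1} = (d_1\cdots d_n)\,H^{\,d_1+\cdots+d_n-n}$, which yields simultaneously the stated codimension and the degree $d_1\cdots d_n$. Establishing properness for generic $X$ is what I expect to be the main obstacle. The cleanest route is to exhibit one game $X_0$ whose Spohn variety attains the expected codimension by a direct computation (for instance a suitably degenerate or diagonal payoff choice for which the minors are easy to analyze), and then to invoke upper semicontinuity of fiber dimension over the affine space of payoff tensors: the locus of $X$ giving a non-proper intersection is closed, and is proper once it omits $X_0$, so a generic $X$ gives a proper intersection and hence no excess components contaminate the class. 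Alternatively one can set up a Bertini/transversality argument treating the payoff entries as the moving parameters in a family.

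Finally, for the Nash statements I would work on the Segre variety $\Sigma = \mathbb{P}^{d_1-1}\times\cdots\times\mathbb{P}^{d_n-1}$, whose points are the product distributions $p = p^{(1)}\otimes\cdots\otimes p^{(n)}$. A direct computation shows that for such $p$ the $i$-th row of $M^{(k)}$ equals $p^{(k)}_i\,(1, u_k(i))$, where $u_k(i)$ is the expected payoff to player $k$ of playing the pure strategy $i$ against the others' marginals; hence the rank-$\le 1$ condition is exactly the requirement that $u_k(i)$ be constant over all $i$ with $p^{(k)}_i>0$, i.e.\ the Nash indifference condition. Intersecting with the open simplex $\Delta_{d_1\cdots d_n-1}$ forces every $p^{(k)}_i>0$, so $\mathcal{V}_X\cap\Sigma\cap\Delta$ is precisely the set of totally mixed Nash equilibria. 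For the last claim, that \emph{every} Nash equilibrium lies on $\mathcal{V}_X$, I would drop the totally-mixed hypothesis and use the support structure: if $i$ is in the support of player $k$ then $u_k(i)$ equals the common optimal value $c_k$, so the $i$-th row is $p^{(k)}_i\,(1,c_k)$; and if $i$ is outside the support then $p^{(k)}_i=0$ makes both entries of that row vanish, since the factor $p^{(k)}_i$ divides $(p\cdot_k X^{(k)})_i$ for a product distribution. Every row of $M^{(k)}$ is then a multiple of the fixed vector $(1,c_k)$, so $M^{(k)}$ has rank $\le 1$ for each $k$, placing the Nash equilibrium on $\mathcal{V}_X$ as a point of $\mathbb{P}^N$ even when it lies on the boundary of the simplex.
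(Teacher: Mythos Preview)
The paper does not prove this theorem; it is quoted from \cite{portakal2022geometry} and \cite{portakal2024dependency}. From the way the paper later invokes that result (see the proof of Proposition~\ref{prop:eq Nash CI}), the original argument for the codimension is the fibration picture: one resolves each determinantal locus by its ``generalized columns'' to obtain a map $\mathcal{V}_X \to (\mathbb{P}^1)^n$ whose fibers $\mathcal{C}_{X,\lambda}$ are linear spaces cut out by $d_1+\cdots+d_n$ linear forms, and one checks directly that for generic $X$ these linear forms are independent for every $\lambda$. This simultaneously yields the codimension and, via the count of fibers over a point, the degree.

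Your approach via Thom--Porteous on each $\mathcal{V}^{(k)}$ and multiplication in the Chow ring is a legitimate alternative, and indeed the paper itself uses Porteous' formula in the same spirit for the Nash CI case (Theorem~\ref{theo: class of Nash}). The two routes buy different things: the fibration argument makes properness of the intersection essentially automatic, since one is bounding every fiber directly; your argument packages the degree computation more cleanly but leaves properness as a separate obligation, which you correctly flag but do not discharge. Exhibiting a single $X_0$ with the expected codimension is workable, but note that it is not quite as trivial as a ``diagonal'' choice, since the first column of each $M^{(k)}$ is fixed independently of $X$; the fibration argument sidesteps this by absorbing the first column into the parameter $\lambda$.

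Your treatment of the Nash statements is correct and matches the standard argument: the factorization $p^{(k)}_i(1,u_k(i))$ of the rows of $M^{(k)}$ on the Segre variety is exactly the point, and your support observation handles the boundary case cleanly.
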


\subsection{Conditional independence equilibria}

From the definitions of Nash and dependency equilibria, several issues naturally emerge. The Spohn variety of a generic game is high-dimensional, whereas its set of Nash equilibria is zero-dimensional. This creates a substantial dimensional gap between Nash and dependency equilibria. From a game-theoretic standpoint, the two concepts occupy opposite ends of the spectrum of player dependencies: Nash equilibria correspond to fully independent behaviors, while dependency equilibria capture complete collaboration among players. Between these extremes, however, one might encounter situations with partial dependencies where some players act collectively while others remain independent. The notion of \emph{conditional independence equilibria}, first introduced in \cite[Section 6]{portakal2022geometry}, bridges this gap in both dimension and dependency structure between Nash and dependency equilibria. Conditional independence equilibria generalize the idea behind dependency equilibria using undirected graphical models from algebraic statistics to take into account only the dependencies between the players encoded by a graph whose vertices are the players of the game. The generic case was analyzed in detail for binary games (where each player has two pure strategies) with the one-edge graphical model in \cite{portakal2024nash}, and further generalized to arbitrary graphical models in \cite{portakal2024game}.

Let $G$ be an undirected graph with $n$ vertices. We identify the vertices of $G$ with the players of the game. Moreover, we consider the players as random variables evaluating according to a joint distribution $p$. Then, the undirected graph $G$ on the players represents the conditional independence relationships between the players through the global Markov properties introduced in Section \ref{subsec CI}. We introduce the algebro-geometric model for this notion of equilibria using algebraic geometry.

\begin{definition}\label{Definition 1.25}
The \textit{Spohn conditional independence (CI) variety} $\mathcal V_{X,G}$ is defined as the variety
\[
\mathcal V_{X,G}=\overline{\mathcal V_X\cap\mathcal M_G\setminus \mathcal W}^{Zar},
\]
where $\mathcal W$ is the union of the hyperplanes
\[
\displaystyle \mathcal W:=
\bigcup_{j\in\mathcal R_{[n]}}\left\{p\in\mathbb P^{d_1 d_2 \cdots d_n-1}\mid p_j=0\right\}\cup \left\{p\in\mathbb P^{d_1 d_2 \cdots d_n-1}\mid p_{++\cdots+}=0\right\}
\]
In other words, the Spohn CI variety $\mathcal V_{X,G}$ is the variety obtained by erasing from the intersection $\mathcal V_X\cap\mathcal M_G$ all the irreducible components lying in the hyperplanes of the form $\{ p_{++\cdots+}=0\}$ or $\{p_j=0\}$ for $j\in\mathcal{R}_{[n]}$.
\end{definition}

We say that $p\in \Delta_{d_1 \cdots d_n -1}$ is a \textit{totally mixed conditional independence equilibrium} if $p$ is contained in the Spohn CI variety $\mathcal{V}_{X,G}$.
For most graphs, the intersection $\mathcal V_X\cap\mathcal M_G$ is not irreducible and has irreducible components contained in $\mathcal W$. Since we focus on totally mixed equilibria, we want to avoid components of our varieties contained in the hyperplanes $\{p_j=0\}$. In addition, motivated by Definition \ref{Definition $1.17$}, we also aim to avoid components lying on the hyperplane $\{ p_{++\cdots+}=0\}$. Therefore, erasing these extra components favors and simplifies the geometry of Spohn CI varieties while agreeing with the game-theoretic motivation behind these concepts. However,
the saturation by $\mathcal W$ becomes computationally heavy quite rapidly. 
Alternatively, we show in Theorem~\ref{theo:saturation} that for generic games one can saturate with hyperplanes of the form $\{p_{+\cdots+a+\cdots +}=0\}$. The following example shows how CI equilibria generalize Nash and dependency equilibria.

\begin{example}\label{ex: first CI equilibria}
For the complete graph, $\mathcal{M}_G=\PP^{d_1\cdots d_n-1}$. The Spohn CI variety and the Spohn variety coincide up to some possible irreducible components contained in the hyperplanes $\mathcal W$ that only appear in nongeneric games (for the saturation with the hyperplanes in Theorem~\ref{theo:saturation}, see e.g.\ \cite[Example 3.11]{portakal2024dependency}). Therefore, the set of totally mixed CI equilibria coincides with the set of totally mixed dependency equilibria for generic games. For the graph with no edge, $\mathcal{M}_G$ is the Segre variety $\PP^{d_1-1}\times \cdots \times \PP^{d_n-1}$. By Theorem~\ref{Proposition $1.20$}, the set of totally mixed CI equilibria coincides with the set of totally mixed Nash equilibria. 
\end{example}

In the set of graphs with $n$ vertices, we have a partial order given by the inclusion of the edge sets. The graph with no edge and the complete graph are placed at the two opposite extremes of this partial order. Given two graphs with $n$ vertices $G$ and $G'$ with $G\subseteq G'$, then, 
$\mathcal{V}_{X,G}\subseteq \mathcal{V}_{X,G'}$ and the same inclusion happens for the set of totally mixed CI equilibria of each graph (see \cite[Section 4]{portakal2024game}). This induces a filtration among the sets of totally mixed CI equilibria (and Spohn CI varieties) associated with each graph, with the totally mixed Nash and dependency equilibria occupying the opposite extremes respectively.

In Sections \ref{sec:nash ci} and \ref{sec:Emptyness}, we focus on cluster graphs. That is a graph whose connected components are complete. For this type of graphs, the Spohn CI variety is called \textit{Nash conditional independence variety}. In this case, the CI equilibria is called \textit{Nash CI equilibria}.
Nash CI varieties and equilibria were studied in \cite[Section 5]{portakal2024game} for binary games.

\begin{remark}
    The definition of CI equilibria in $\overline\Delta$ rather than $\Delta$ would need some more work. If we want to intersect $\mathcal V_{X,G}$ with $\overline\Delta$ rather than $\Delta$, the different sets of CI statements from $G$ will not be equivalent anymore. One needs to distinguish between the global, local, and pairwise Markov properties, and the corresponding models are not equivalent to the parametrizable model for non-chordal graphs.
\end{remark}

\subsection{Examples}\label{subsec: examples}

In order to make these concepts from algebraic game theory more concrete, this section provides explicit computations of Spohn CI varieties and CI equilibria for certain binary normal-form games. We compute some invariants of the Spohn CI varieties as their dimension and degree, using \texttt{Macaulay2} \cite{connelly2025gametheory}. All the examples yield the generic dimensions 0, 1, 2, and 4 for the 0-, 1-, 2-, and 3-edge graphs, respectively, as proven later in Section~\ref{sec:dim}. However, the degrees in these examples might differ from those of the generic Spohn CI varieties, which are 2, 8, 28, and 8, respectively.

\begin{example}[{Highly non-generic binary game}]\label{Example $1.28$}
  We consider the following $2 \times 2 \times 2$ game. We fix the payoff tensors to be 
  \[\begin{array}{cccc}
    X^{(1)}_{i_1i_2i_3}=\left\{\begin{array}{ll}
    1 & \text{ if } i_1=i_2=i_3,\\
    0 & \text{ else,}
    \end{array}
  \right.  &X^{(2)}=\pm X^{(1)} &\text{ and }& X^{(3)}=\pm X^{(1)}.
    \end{array}
  \]
  Here, some players want all of them to make the same choice (the ones with payoff tensor $+X^{(1)}$), and the others do not (the ones with payoff tensor $-X^{(1)}$). Because of the symmetry, it can be checked that either choice $\pm$ does not change the equations defining $\mathcal V_X$. Again by symmetry, the graphs we choose for the distinct CI equilibria do not depend on the labeling of the vertices. Therefore, the distinct Spohn CI varieties only depend (up to labeling) on the number of edges of the graph. It follows that $\emptyset\subsetneq\mathcal V_{X,G_0}\subsetneq\mathcal V_{X,G_1}\subsetneq\mathcal V_{X,G_2}\subsetneq\mathcal V_{X,G_3}\subsetneq\mathbb P^7$.\\
  
\emph{$G_0$ with no edges.} The players act purely independently from each other. The Spohn CI variety $\mathcal V_{X,G_0}$ has dimension $0$ and degree $2$, and hence, consists of two distinct points. However, after intersecting with $\Delta$ and renormalizing to $1$, only one of these two points is contained in $\Delta$. Therefore, 
      there is only one totally mixed Nash equilibrium: $\frac12$ for each player on each option, which means $p_{i_1 i_2 i_3}=\frac18$ for all $i_1,i_2,i_3\in[2]$. \\ 
\indent \emph{$G_1$ with one edge.} The Spohn CI variety $\mathcal V_{X,G_1}$ has now dimension $1$ and degree $2$. If the first player is the isolated vertex, the set of CI equilibria is described as the set of probabilities $p$ with
      \[
        \left\{\begin{array}{l}
       \displaystyle p_{i,j,j} = \frac{t}{2},\\ \displaystyle
        p_{i,j,3-j} = \frac{1-2t}{4},
        \end{array}
        \right. \text{ for } i,j\in[2] \text{ and } 0<t<\frac{1}{2}.
      \]
For $t=\frac14$ we recover the totally mixed Nash equilibrium ($0$-edge graph).
Unlike totally mixed Nash equilibria, the CI equilibria are now characterized by the fact that players $2$ and $3$ coordinate their way to behave randomly through the parameter $t$.\\
\indent \emph{$G_2$ with two edges.} The Spohn CI variety $\mathcal V_{X,G_2}$ has dimension $2$ and degree $6$. Assuming that players $1$ and $3$ are the players not connected by an edge, the CI equilibria are given by: $$\left\{\begin{array}{l}
p_{i,i,i}=\frac12-u_1-u_2-u_3,\\
p_{3-i,i,i}=u_1,\\
p_{i,3-i,i}=u_2,\\
p_{i,i,3-i}=u_3,
\end{array}\right.\text{ for } i\in[2],\text{ where }\left\{\begin{array}{ll}
    u_1,u_2,u_3>0,\\ u_1+u_2+u_3<\frac12,\\ (u_2+u_1)(u_2+u_3)=\dfrac{u_2}2.
\end{array}\right.$$
To recover the case of the $1$-edge graph between players $2$ and $3$, take the parameters $u_1=\frac t2$ and $u_2=u_3=\frac{1-2t}4$ for $0<t<\frac12$.\\
\indent \emph{$G_3$ the complete graph.} We have that $\dim(\mathcal V_{X,G_3})=4$ and $\deg(\mathcal V_{X,G_3})=2$. In fact, we have that $\mathcal V_{X,G_3}=\overline{\mathcal V_X\setminus\mathcal W}^{Zar}$. The CI equilibria are the probabilities $p$ such that:
$$\forall i\in[2],~\left\{\begin{array}{l}
\vspace*{1mm}p_{i,i,i}=\big(\frac12-u_1-u_2-u_3\big)\cdot\big(1+\frac{\delta(i-\frac32)}{u_1+u_2+u_3}\big),\\
\vspace*{1mm}p_{3-i,i,i}=u_1+\delta\big(i-\frac32\big),\\
\vspace*{1mm}p_{i,3-i,i}=u_2+\delta\big(i-\frac32\big),\\
p_{i,i,3-i}=u_3+\delta\big(i-\frac32\big),
\end{array}\right.$$ with parameters $u_1,u_2,u_3,\delta$ such that $$\left\{\begin{array}{l}
    u_1+u_2+u_3<\frac12,\\
    u_1,u_2,u_3>\frac{|\delta|}2.
\end{array}\right.$$
We recover the case of the $2$-edge graph by choosing $\delta=0$ and $u_1,u_2,u_3$ verifying $(u_2+u_1)(u_2+u_3)=\frac12\cdot u_2$.
      
  \end{example}

\begin{example}[Discrete El Farol Bar problem]\label{Example $1.29$}

In this game, $n$ people living in the same village are given two choices to have fun tonight: they can either stay at home (strategy 1) or go to the village's bar (strategy 2). If more than
$a \%$ of the population of $n$ players go to the bar, then the bar is overcrowded: the people in the bar have less fun than if they had stayed home. Otherwise, they have more fun going to the bar than staying home.
The payoff tensor of player $k\in[n]$ is given for all $i_1,i_2,\ldots,i_n\in[2]$ by $$X^{(k)}_{i_1i_2\cdots i_n}=\left\{\begin{array}{l}
      ~0~\text{ if }i_k=1,\\
      ~\displaystyle1~\text{ if }i_k=2\text{ and }\sum_{l=1}^n(i_l-1)\le a\cdot n,\\
      \displaystyle-1\text{ if }i_k=2\text{ and }\sum_{l=1}^n(i_l-1)>a\cdot n.
\end{array}\right.$$

\noindent In fact, since $X_{\cdots1\cdots}^{(k)}=0$ for $1$ as $k$-th index, the equations of our varieties will be linear. We fix $n=3$ for the computations. As in Example \ref{Example $1.28$}, the payoff tensors are generic enough to get the expected dimension of $\mathcal V_{X,G}$, but the obtained degree is not the generic one. By symmetry of the payoff tensors, the Spohn CI variety is the same up to relabeling if we exchange two vertices of the graph $G$. We distinguish two cases: $a=1/2$ and $a=3/4$. \\

\noindent The value $a=1/2$ means that the bar is overcrowded as soon as two players go.\\
\indent \emph{$G_0$ with no edges.} $\dim(\mathcal V_{X,G_0})=0$ and $\deg(\mathcal V_{X,G_0})=2$. However, there is only one totally mixed Nash equilibrium, given by $p_{i_1i_2i_3}=\big(\frac1{\sqrt2}\big)^y\big(1-\frac1{\sqrt2}\big)^z$, where $y$ and $z$ are the number of $1$'s and $2$'s among the indexes $i_1,i_2,i_3$.\\
\indent \emph{$G_1$ with one edge.} $\dim(\mathcal V_{X,G_1})=1$ and $\deg(\mathcal V_{X,G_1})=3$. Assuming that the edge is between the second and the third player, the totally mixed CI equilibria are given by: $$\left\{\begin{array}{l}
   \vspace*{1mm}  p_{111}=\frac1{8y}(1-2y)\text{ and }p_{122}=\frac1{8y}(1-2y)(1-4y),\\
   \vspace*{1mm}  p_{112}=p_{121}=\frac14(1-2y),\\
  \vspace*{1mm}   p_{212}=p_{221}=\frac14(6y-1),\\
      p_{211}=\frac1{8y}(6y-1)\text{ and }p_{222}=\frac1{8y}(6y-1)(1-4y),
\end{array}\right.\text{ where }\frac16<y<\frac14.$$

For $y \rightarrow \frac14$, we get that $p_{i22} \rightarrow 0$, which means that players $2$ and $3$ agree not to go together to the bar since they know that both going would be worse for both of them. Player $1$ plays at random (probability $\frac12$ of coming to the bar); the existence of such a player justifies why players $2$ and $3$ do both not come to the bar with a high probability, namely $\frac12$. But when they do, ``$2$ goes and $3$ does not" and ``$3$ goes and $2$ does not" have the same probability to occur, which is $\frac14$.\\
\indent \emph{$G_2$ with two edges.} We have that $\dim(\mathcal V_{X,G_2})=2$, $\deg(\mathcal V_{X,G_2})=4$, and there is no direct  explicit formula for the equilibrium points $p>0$ in this case. \\ 
\indent \emph{$G_3$ complete graph.} We have that $\dim(\mathcal V_{X,G_3})=4$ and $\deg(\mathcal V_{X,G_3})=1$. The equilibria $p>0$ are given by

$$\left\{\begin{array}{l}
 \vspace*{1mm}     p_{222}=x\text{ and }p_{111}=1+\dfrac x2-\dfrac32(u_1+u_2+u_3),\\
 \vspace*{1mm}     p_{211}=u_1\text{ and }p_{122}=\dfrac{u_2+u_3-u_1-x}2,\\
\vspace*{1mm}     p_{121}=u_2\text{ and }p_{212}=\dfrac{u_1+u_3-u_2-x}2,\\
      p_{112}=u_3\text{ and }p_{221}=\dfrac{u_1+u_2-u_3-x}2,
\end{array}\right.$$
        where
        parametrized by $$\left\{\begin{array}{l}
 \vspace*{1mm}     u_1,u_2,u_3,x>0, \\
\vspace*{1mm}     u_i+x<u_j+u_k\,\,\text{ for distinct }i,j,k,\\
      u_1+u_2+u_3<\frac13(2+x).
\end{array}\right.$$

\noindent The value $a=3/4$ means that the bar is overcrowded only if all the players go. \\
\indent \emph{$G_0$ with no edges.} We get that  $\dim(\mathcal V_{X,G_0})=0$ and $\deg(\mathcal V_{X,G_0})=2$. However, there is only one totally mixed Nash equilibrium, given by $$p_{i_1i_2i_3}=\big(\frac1{\sqrt2}\big)^y\big(1-\frac1{\sqrt2}\big)^z,$$ where $y$ and $z$ are the number of indexes among $i_1,i_2,i_3$ that are equal to $2$ and $1$ respectively.\\
\indent \emph{$G_1$ with one edge.} We get that  $\dim(\mathcal V_{X,G_1})=1$ and $\deg(\mathcal V_{X,G_1})=2$. Assuming that the only edge connects the second and third player, the Nash CI equilibria $p>0$ are given by 
    $$\left\{\begin{array}{l}
    \vspace*{1mm}
     p_{i11}=\big(\frac12-2x\big)\cdot\big(\frac12+(i-\frac32)x\big),\\
     \vspace*{1mm}
      p_{i12}=p_{i21}=x\cdot\big(\frac12+(i-\frac32)x\big),\\
      p_{i22}=\frac12\big(\frac12+(i-\frac32)x\big),
\end{array}\right.\text{ for } i\in[2]\text{ and }0<x<\frac14.$$

It can be interpreted as follows. Because of the symmetry of the game, every player has the same partial probability with a higher probability of going to the bar, since it remains better in most cases. However, players $2$ and $3$ can anticipate, thus better coordinate, whether they go to the bar or not. Hence, if player $1$ is more likely to go to the bar (high value of $x$), then players $2$ and $3$ not going at the same time to the bar will become a more likely event: ``only $2$ comes" or ``only $3$ comes" have an equal probability $x$. Player $1$, who can not coordinate with the others, keeps the same expected payoff (which is $0$) as in the Nash equilibrium (which is $x=\frac1{\sqrt2}-\frac12$) whereas players $2$ and $3$ get better expected payoffs $\frac x2$. For $x\rightarrow\frac14$, at least one player among $2$ and $3$ comes to the bar: this looks more rational than the Nash equilibrium since it avoids the situation where none of them benefits from going to the bar.
\\
\indent \emph{$G_2$ with two edges.} We get that $\dim(\mathcal V_{X,G_2})=2$ and $\deg(\mathcal V_{X,G_2})=4$. In this case, the formula for the equilibrium points $p>0$ is computable but hard to interpret.\\
\indent \emph{$G_3$ complete graph.} We get that  $\dim(\mathcal V_{X,G_3})=4$ and $\deg(\mathcal V_{X,G_3})=1$. The CI equilibria $p>0$ are given by
    \begin{align*}
    &\left\{\begin{array}{l}
         p_{222}=x\text{ and }p_{111}=1+u_1+u_2+u_3-4x ,\\
          p_{122}=u_1\text{ and }p_{211}=x-u_2-u_3,\\
          p_{212}=u_2\text{ and }p_{121}=x-u_1-u_3,\\
          p_{221}=u_3\text{ and }p_{112}=x-u_1-u_2,
    \end{array}\right.\\
    \text{where }&\left\{\begin{array}{l}
          x,u_1,u_2,u_3>0, \\
          u_i+u_j<x<\frac14(1+u_1+u_2+u_3)\text{ for }i\ne j.\\
    \end{array}\right.
\end{align*}
 
\end{example}

\begin{example}[Variant of the Cournot oligopoly]\label{Example $1.30$}

Fishers $1,2,\ldots,n$ respectively fish $i_1,i_2,\ldots,i_n$ fishes early in the morning, then they want to sell them during the day in the same port. However, the
price of a fish decreases when the total amount of fished fishes $m=i_1+i_2+\cdots+i_n$ increases, according to the law of supply and demand. If $m$ fishes have been fished in total, the price of one fish is given by $\max\{0,~M-m\}$. We also assume that it is humanly impossible for one fisher to fish more than $K$ fishes in the morning. Hence, the payoff tensor of fisher $k$ is for all $i_1,i_2,\ldots,i_n\in[K]$: $$X^{(k)}_{i_1i_2\cdots i_n}=i_k\cdot\max\left\{0,~M-\sum_{j=1}^ni_j\right\}.$$
In this example we fix $n=3$, $K=2$ and $M=6$. The prices of the fishes and the benefits of each fisher are gathered in Table~\ref{table: Fishers}. As in the previous examples, the symmetry among the players allow us to distinguish graphs only by the number of edges.

\begin{table}[h]
\begin{tabular}{|c|c|c|c|c|}
\hline
 & \makecell{ Everybody fishes \\ one fish} &\makecell{Fisher $1$ fishes two\\ and the others one}&\makecell{ Fishers $1$ and $2$ fishes two \\ and the other one} & \makecell{Everyone fishes\\ two fishes}\\ \hline
 \makecell{Price \\of a fish} & 3&2&1&0\\ \hline
 \makecell{Gains of \\ fishers $1$, $2$, $3$}& $3,3,3$& $4,2,2$ & $2,2,1$& $0,0,0$ \\ \hline 
 \makecell{Collective \\ gain }& 9 & 8 & 5 & 0 \\ \hline
 
\end{tabular}
\caption{Table with the prices and gains of the fisher in the game "Variant of the Cournot Oligopoly" in Example \ref{Example $1.30$}.}
\label{table: Fishers}
\end{table}

\emph{$G_0$ with no edges.} $\dim(\mathcal V_{X,G_0})=0$ and $\deg(\mathcal V_{X,G_0})=1$. Therefore, the Spohn CI variety consists of a point. There is indeed only one totally mixed Nash equilibrium $p=\big(\frac18\big)_{i_1,i_2,i_3}$: fishers play randomly because there is no information about each other's behavior.\\
\indent \emph{$G_1$ with one edge.} $\dim(\mathcal V_{X,G_1})=1$ and $\deg(\mathcal V_{X,G_1})=3$. Assuming that the edge connects players $2$ and $3$, the Nash CI equilibria $p>0$ are given by
    $$\left\{\begin{array}{l}
      p_{111}=p_{122}=x(6x-1), \\
      p_{112}=p_{121}=(\frac12-x)(6x-1),\\
      p_{211}=p_{222}=x(2-6x),\\
      p_{212}=p_{221}=(1-2x)(1-3x),
\end{array}\right.\text{ for }\frac16<x<\frac13.$$ \\
\noindent For two and three edge graphs there is no easy representation of the set of totally mixed CI equilibria. For a $2$--edge graph $G_2$, we have that $\dim(\mathcal V_{X,G_2})=2$ and $\deg(\mathcal  V_{X,G_2})=28$. For a $3$--edge graph $G_3$, we have that $\dim(\mathcal V_{X,G_3})=4$ and $\deg(\mathcal V_{X,G_3})=8$.

\end{example}

\section{Dimension of generic Spohn CI varieties}\label{sec:dim}

The dimension of Spohn varieties was carried out in \cite[Theorem 6]{portakal2022geometry}, where the authors show that the Spohn variety of a generic game has codimension $d_1+\cdots+d_n-n$. The dimension of the discrete conditional independence model $\mathcal M_G$ is computed in \cite[Corollary 2.7]{hocsten2002grobner}. Therefore, we focus on the codimension of Spohn CI varieties in $\mathcal{M}_G$.
In the case of generic binary games, it was conjectured in \cite[Conjecture 24]{portakal2022geometry} that $
\mathrm{codim}\,_{\mathcal{M}_G} \mathcal{V}_{X,G} = n$.
In \cite[Proposition 4]{portakal2024nash}, this conjecture was first proven for one-edge graphs, and in \cite[Theorem 9]{portakal2024game} for any graph. However, the case of nonbinary games remained open. In this section, we resolve this gap.

Before moving to the computation of this dimension, we highlight the slightly pathological behavior of the nonbinary case. In the setting of generic binary games,  the Spohn CI variety of any graph is nonempty. However, depending on the graph, the Spohn CI variety of a generic nonbinary game might be empty. This already appears for the Nash equilibria, i.e., for no-edge graphs. 

\begin{remark}[{\cite[Theorem 2.7]{abo2025vector}}]\label{rem: nash emptyness}
    Let $G$ be the graph with no edges, then for a generic game $X$, the Spohn CI variety $\mathcal{V}_{X,G}$ is non-empty if and only if
    \[
    d_i\leq 2-n+\sum_{j\neq i} d_j \text{ for } i\in[n].
    \]
\end{remark}

\noindent This is not the only example of this behaviour. For instance, consider a $5 \times 2 \times 2$-game where $G$ is the graph connecting the second and third vertex. Then, a \texttt{GameTheory.m2} \cite{connelly2025gametheory} computation shows that there exists a \texttt{randomGame} $X$ for which the Spohn CI variety is empty. By the upper semicontinuity of the fibers, the Spohn CI variety of $G$ and a generic game $X$ will be empty\footnote{The upper semicontinuity theorem ensures that the set of $X$ such that $\mathcal V_{X,G}=\emptyset$ is open.  But the complementary (closed) set has at least codimension $1$ since it doesn't recover the whole space of $X$. Thus, the set of $X$ for which $\mathcal V_{X,G}=\emptyset$ is also dense in the Zariski topology.}. 
The problem of analyzing when is the Spohn CI variety of a generic game empty is left for Section \ref{sec:Emptyness}. In this section we focus on the computation of the dimension of generic Spohn CI varieties whenever they are nonempty.

The strategy used in \cite[Section 3]{portakal2024game} 
is to study the linear systems defined by the determinants of the matrices $M^{(i)}$, and then apply Bertini's Theorem (see Theorem \ref{theo:Bertini}). The difference with respect to the binary case is that $M^{(i)}$ might not be a square matrix, and hence, we obtain more equations coming from its $2\times 2$ minors of these matrices. To find the equations of these minors, we use the monomial parametrization of the graphical model $\mathcal{M}_G$ (Definition~\ref{def: parametrized CI model}).
\subsection{Linear systems and their base locus}\label{subsec:base locus}

Let $G$ be a graph with $n$ vertices and let $\mathcal{C}(G)$ be the set of all maximal cliques of $G$. Given a clique $C\in\mathcal{C}(G)$, we denote the state space of $C$ by $\mathcal{R}_C:=\prod_{i\in C}[d_i]$ whose cardinality is $D_C:=\prod_{i\in C}d_i$. To each maximal clique $C\in \mathcal{C}(G)$ we associate a projective space 
\[
\mathbb{P}_C:=\mathbb{P}^{D_C-1}
\]
with coordinates $\sigma^{(C)}_{j_C}$ for $j_C\in \mathcal{R}_C$. We denote the torus of $\mathbb{P}_C$ by $\mathbb{T}_C$. In other words, 
\[
\mathbb{T}_C:=(\mathbb{C}^{*})^{D_C}/\mathbb{C}^{*}\simeq (\mathbb{C}^{*})^{D_C-1}.
\]
Here, $\mathbb{C}^{*}$ acts on $(\mathbb{C}^{*})^{D_C}$ by scalar multiplication.
We also denote the coordinates of $\mathbb{T}_C$ by $\sigma^{(C)}_{j_C}$ for  $j_C\in\mathcal{R}_C$.
Using this notation, we associate to $G$ the Segre variety and the torus 
\[ \begin{array}{ccc}
\mathbb{P}_G:= \displaystyle\prod_{C\in\mathcal{C}(G)} \mathbb{P}_C & \text{ and }& \displaystyle
\mathbb{T}_G:=\prod_{C\in\mathcal{C}(G)}\mathbb{T}_C.
\end{array}
\]
Then, the monomial map parametrizing the toric variety $\mathcal{M}_G$ given in Definition~\ref{def: parametrized CI model}  is 
\begin{equation}
    \label{eq: mon map}
    \phi_G:\mathbb{T}_G\longrightarrow\mathcal{M}_G
\end{equation}

given by 
\begin{equation}\label{eq:mon map}
p_{j_1\cdots j_n}:=\prod_{C\in\mathcal{C}(G)}\sigma^{(C)}_{j_C}.
\end{equation}
Note that one can replace $\mathbb{T}_G$ by the Segre variety $\PP_G$ as domain of the monomial map \eqref{eq: mon map}.
 
\subsubsection{Intersecting the Spohn variety with the parametrized undirected graphical model} Our goal is to calculate the dimension of  $\mathcal V_{X,G}$ by pulling it back via the monomial map $\phi_G$. To understand this pullback, we first substitute \eqref{eq:mon map} in the matrices $M^{(k)}$.  We follow the notation used in \cite{portakal2024game}. 
For a vertex $k\in[n]$, we denote by $G_k$ its connected component in $G$. In particular, we have that the set $\mathcal{C}(G_k)$ of maximal cliques of $G_k$ is a subset of $\mathcal{C}(G)$.
We also consider the sets
$$\mathscr C_k=\displaystyle\prod_{s\in G_k\setminus\{k\}}[d_s] \text{ and }\mathscr D_k=\displaystyle\prod_{s\in[n]\setminus G_k}[d_s].$$
For $j\in \mathscr C_k$ and $a\in [d_k]$, we define the index $j(a)\in [d_k]\times \mathscr C_k$ as $j(a)_k=a$ and $j(a)_i=j_i$ for every $i\in G_k\setminus\{k\}$. Using this notation, we define the monomials
\begin{equation*}
\begin{array}{cl}
\displaystyle\mathfrak S_{j,a}^{(k)}:=\prod_{C\in\mathcal C(G_k)}\sigma_{j_C(a)}^{(C)} &\text{for }j\in \mathscr C_k \text{ and } a\in [d_k],\\
\displaystyle\mathfrak O_{j}^{(k)}:=\prod_{C\not\in\mathcal C(G_k)}\sigma_{j_C}^{(C)} &\text{for }j\in \mathscr D_k,
\end{array}
\end{equation*}
and the polynomial
\begin{equation}\label{eq: poly L}
L_a^{(k)}:=
\sum_{j\in\mathscr C_k}\mathfrak S_{j,a}^{(k)}.
\end{equation}
Note that using this notation we have that 
\[
\begin{array}{ccc}
p_{j_1\ldots j_{k-1}aj_{k+1}\cdots j_n}=\displaystyle\mathfrak S_{j,a}^{(k)}\, \mathfrak O_{j}^{(k)}
&\text{and}&\displaystyle
\sum_{j_1,\ldots j_{k-1},j_{k+1}\ldots,j_n}p_{j_1\ldots j_{k-1}aj_{k+1}\cdots j_n}= L_a^{(k)}\, \left(
\sum_{j'\in\mathscr D_k}\mathfrak D_{j'}^{(k)}\right).
\end{array}
\]

\noindent Therefore, evaluating the monomial map $\phi_G$ at $M^{(k)}$ we get the matrix
 \begin{equation}\label{eq:matrix 1}
\begin{pmatrix}
\displaystyle
\sum_{(j,j')\in\mathscr C_k\times\mathscr D_k}\mathfrak S_{j,1}^{(k)}\,\mathfrak D_{j'}^{(k)} & \displaystyle
\sum_{(j,j')\in\mathscr C_k\times\mathscr D_k}X_{\cdots 1\cdots}^{(k)}\,\mathfrak S_{j,1}^{(k)}\,\mathfrak D_{j'}^{(k)}\\ \vdots &\vdots \\
\displaystyle
\sum_{(j,j')\in\mathscr C_k\times\mathscr D_k}\mathfrak S_{j,d_k}^{(k)}\,\mathfrak D_{j'}^{(k)} & \displaystyle
\sum_{(j,j')\in\mathscr C_k\times\mathscr D_k}X_{\cdots d_k\cdots}^{(k)}\,\mathfrak S_{j,d_k}^{(k)}\,\mathfrak D_{j'}^{(k)}
    
\end{pmatrix}
 \end{equation}
 where $X^{(k)}_{\cdots a\cdots}$ has $k$-th index $a\in[d_k]$ and other indices match with the ones of $j$ and $j'$.  
\subsubsection{Removing components with respect to the saturation} Note that the $a$--th entry of the first column of \eqref{eq:matrix 1} factors as the product
 \begin{equation}\label{eq:extra factor}
 \displaystyle 
 L_a^{(k)}
 \left(
\sum_{j'\in\mathscr D_k}\mathfrak D_{j'}^{(k)}\right).
 \end{equation}
In particular, the second factor appears in each entry of the first column of \eqref{eq:matrix 1}, and hence, it will appear as a factor in each minor of \eqref{eq:matrix 1}. Since the equation of the hyperplane  $\{p_{+\cdots+}=0\}$ factors as
\[
\displaystyle p_{+\cdots+}=\left( \sum_{a\in[d_k]} L_a^{(k)}\right)\left(\sum_{j\in\mathscr{D}_k}\mathfrak{D}_j^{(k)}\right),
\]
the second factor of \eqref{eq:extra factor} leads to irreducible components  contained in the hyperplane of the form $\{p_{+\cdots+}=0\}$. Therefore,  this factor can be removed, and we obtain the matrix 
 \begin{equation}\label{eq:matrix 2}
\widetilde{M}^{(k)}=\begin{pmatrix}
\displaystyle
L_1^{(k)}& \displaystyle
\sum_{(j,j')\in\mathscr C_k\times\mathscr D_k}X_{\cdots 1\cdots}^{(k)}\,\mathfrak S_{j,1}^{(k)}\,\mathfrak D_{j'}^{(k)}\\ \vdots &\vdots \\
\displaystyle
L_{d_k}^{(k)}& \displaystyle
\sum_{(j,j')\in\mathscr C_k\times\mathscr D_k}X_{\cdots d_k\cdots}^{(k)}\,\mathfrak S_{j,d_k}^{(k)}\,\mathfrak D_{j'}^{(k)}
\end{pmatrix}.
 \end{equation}
 In the case where $k$ is an isolated vertex, we can do further simplifications on this matrix. In this case, the monomial $\mathfrak S_{j,a}^{(k)}$ equals $\sigma^{(k)}_a$ for $a\in [d_k]$ and the $a$--th row of the matrix is 
 \[
\begin{pmatrix}
    \displaystyle
\sigma^{(k)}_a& \displaystyle
\sum_{j'\in\mathscr D_k}X_{\cdots a\cdots}^{(k)}\,\sigma^{(k)}_a\,\mathfrak D_{j'}^{(k)}
\end{pmatrix}.
 \]
 We note that $\sigma^{(k)}_a$ is a common factor in the row. As before, since this factor leads to components lying on the hyperplane of the form $\{p_{j_1\cdots j_{k-1} a j_{k+1}\cdots j_n}=0\}$, we may remove this factor. Therefore, for an isolated vertex $k$, we define the matrix $\widetilde{M}^{(k)}$ as the matrix
 \begin{equation}\label{eq:matrix 3}
\widetilde{M}^{(k)}=\begin{pmatrix}
\displaystyle
1& \displaystyle
\sum_{j'\in\mathscr D_k}X_{\cdots 1\cdots}^{(k)}\,\mathfrak D_{j'}^{(k)}\\ \vdots &\vdots \\
\displaystyle
1& \displaystyle
\sum_{j'\in\mathscr D_k}X_{\cdots d_k\cdots}^{(k)}\,\mathfrak D_{j'}^{(k)}
\end{pmatrix}.
 \end{equation}

\noindent For a player $k$ and two distinct strategies $a,b\in[d_k]$, we define the polynomial $F_{a,b}^{(k)}$ as the $2\times 2$ minor of $\widetilde{M}^{(k)}$ given by the $a$--th and $b$--th rows. In other words, if $k$ is not an isolated vertex, we have
\[
F_{a,b}^{(k)}=\left|\begin{array}{cc}
    \displaystyle L_a^{(k)}&\displaystyle\sum_{(j,j')\in\mathscr C_k\times\mathscr D_k}X_{\cdots a\cdots}^{(k)}\mathfrak S_{j,a}^{(k)}\,\mathfrak D_{j'}^{(k)}\\
    &\\
    \displaystyle L_b^{(k)}&\displaystyle\sum_{(j,j')\in\mathscr C_k\times\mathscr D_k}X_{\cdots b\cdots}^{(k)}\mathfrak S_{j,b}^{(k)}\,\mathfrak D_{j'}^{(k)}
\end{array}\right|.
\]
If $k$ is an isolated vertex, we have the following form: 
\[
F_{a,b}^{(k)}=\left|\begin{array}{cc}
   1~~~~~~&\displaystyle\sum_{j'\in\mathscr D_k}X_{\cdots a\cdots}^{(k)}\,\mathfrak D_{j'}^{(k)}\\
    &\\
   1~~~~~~&\displaystyle\sum_{j'\in\mathscr D_k}X_{\cdots b\cdots}^{(k)}\,\mathfrak D_{j'}^{(k)}
\end{array}\right|
\]
In particular, $F_{a,b}^{(k)}$ is a multihomogeneous polynomial in the variables $\sigma_j^{(C)}$ for $C\in \mathcal{C}(G)$, i.e.\ in the Segre variety $\mathbb{P}_G$. Let $e_C$ be the standard vector. The multidegree of $F_{a,b}^{(k)}$  is 

\[u_k:=\left\{\begin{array}{cl}
\displaystyle\sum_{C\not\in \mathcal{C}(G_k)} e_C + 2\sum_{C\in \mathcal{C}(G_k)} e_C & \text{ if }k\text{ is not an isolated vertex,}\\ & \\
\displaystyle\sum_{C\not\in \mathcal{C}(G_k)} e_C = (1,\ldots,1,\underset{(k)}{0},1,\ldots,1)&
\text{ if }k\text{ is an isolated vertex.}
\end{array}\right.\] 

\noindent For a game $X$, we define the variety $Y_{X,G}$ as the subvariety of $\mathbb{T}_G$ given by the $2\times 2$ minors of the matrices $\widetilde{M}^{(1)},\ldots,\widetilde{M}^{(n)}$. That is $Y_{X,G}$ is the variety defined by the polynomials $F^{(k)}_{a,b}$ for all $k\in[n]$ and $a,b\in[d_k]$.
Let $\widetilde{\mathcal{V}}_{X,G}$ be the preimage of the Spohn CI variety via the monomial map $\phi_G$. By construction, we have that $\widetilde{\mathcal{V}}_{X,G}\subseteq Y_{X,G}$. Moreover, $\widetilde{\mathcal{V}}_{X,G}$ is obtained by removing from $ Y_{X,G}$ the irreducible components contained in the pullback of hyperplanes of the form $\{p_{j_1\cdots j_n}=0\}$ and $\{p_{+\cdots +}=0\}$. In Proposition~\ref{prop:eq Nash CI}, we show that $\widetilde{\mathcal{V}}_{X,G}= Y_{X,G}$ for generic $X$. However, this equality does not hold for any graph.

\begin{example}\label{ex: wrong saturaion}
Consider a generic $d_1 \times d_2 \times d_3$ game and let $G$ be the path graph, i.e. the graph where the edge between the first and third vertex is removed from the complete graph. Since there is no isolated vertex nor separate connected component in the graph (it has only two maximal cliques), neither of the simplifications \eqref{eq:matrix 2} and \eqref{eq:matrix 3} take effect. Let $\overline{Y_{X,G}}$ be the closure of $Y_{X,G}$ in $\PP_G$. Then, we get $\phi_G(\overline{Y_{X,G}})=\mathcal M_G\cap\mathcal V_X$. Here, we see the monomial map $\phi_G$ as a map from $\PP_G$ instead of from $\mathbb{T}_G$, which makes $\phi_G$ surjective onto $\mathcal{M}_G$. If we assume $\widetilde{\mathcal{V}}_{X,G}= Y_{X,G}$, we get that 
\[
\mathcal M_G\cap\mathcal V_X=\phi_G\left(\overline{Y_{X,G}}\right)=\phi_G\left(\overline{\widetilde{\mathcal{V}}_{X,G}}\right)=\mathcal V_{X,G}.
\]
 However,
Table \ref{table:2 edge graph} lists the dimension and degree of the varieties $\mathcal V_{X,G}$  and $\mathcal M_G\cap\mathcal V_X$
for distinct values of $d_1$, $d_2$, and $d_3$, which shows that the equality does not hold in general. We conclude that in general $\widetilde{\mathcal{V}}_{X,G}\subsetneq Y_{X,G}$.

\begin{table}
\begin{tabular}{|ccc|cc|cc|}
\hline
\multicolumn{3}{|c|}{Strategies} & \multicolumn{2}{c|}{$\mathcal M_G\cap\mathcal V_X$} & \multicolumn{2}{c|}{$\mathcal V_{X,G}$} \\ \hline
$d_1$ & $d_2$ & $d_3$ & Dimension & Degree & Dimension & Degree \\ \hline
2 & 2 & 2 & 2 & 32 & 2 & 28 \\ \hline
2 & 2 & 3 & 4 & 2 & 3 & 87 \\ \hline
2 & 3 & 2 & 4 & 96 & 4 & 92 \\ \hline
2 & 2 & 4 & 6 & 2 & 4 & 196 \\ \hline
2 & 4 & 2 & 6 & 256 & 6 & 252 \\ \hline
\end{tabular}
\caption{Table with the degrees and dimensions of generic Spohn CI varieties for the path graph.}
\label{table:2 edge graph}
\end{table}
\end{example}
\subsubsection{Linear system in the torus and the equations of its base locus}\label{subsec:linsys}
In order to calculate the dimension of Spohn CI variety, we first study the dimension of the variety $Y_{X,G}$ through its equations. In the binary case done in \cite[Section 3]{portakal2024game}, the strategy followed is to analyze the linear system $\Lambda_{a,b}^{(k)}$ of all multihomogeneous polynomials $F_{a,b}^{(k)}$. For a player $k$ and $a,b\in[d_k]$ distinct, the linear system $\Lambda_{a,b}^{(k)}$ is the image the linear map 
\[
\mathbb{R}^{d_1\cdots d_n}\longrightarrow |\mathcal{O}_{\PP_G}(u_k)|
\] 
that associates to a payoff tensor $X^{(k)}$ the multihomogeneous polynomial $F^{(k)}_{a,b}$. Here, $|\mathcal{O}_{\PP_G}(u_k)|$ denotes the projective space of multihomogeneous polynomials of multidegree $u_k$ in $\PP_{G}$. In the nonbinary case, our strategy is to look at a slightly different linear system.
For a payoff tensor $X^{(k)}$ and a strategy $a\in [d_k]$, we define the $a$--th \textit{slice payoff tensor} of $X^{(k)}$ as the subtensor of $X^{(k)}$ given by $$X^{(k)}_{a}:=(X_{j_1\cdots a\cdots j_n})_{j_i\in[d_i]\text{ for } i\neq k}.$$
In other words, the entries of $X^{(k)}_{a}$ are the entries of $X^{(k)}$ for which $j_k=a$. Let $V_a^{(k)}:=\mathbb{R}^{d_1\cdots d_{k-1}d_{k+1}\cdots d_n}$ be the vector space of all these tensors. In particular, the vector space of payoff tensors of the player $k$ is isomorphic to the product $V_1^{(k)}\times \cdots \times V_{d_k}^{(k)}$.  Using this notation, we get that the polynomial $F^{(k)}_{a,b}$ only depends on the slice payoff tensors $X^{(k)}_a$ and $X^{(k)}_b$.
Now, fix a strategy $a\in[d_k]$ and the slice payoff tensor $X^{(k)}_a$. For $b\in[d_k]\setminus\{a\}$, we consider the map 
\begin{equation*}
    \label{eq:map linear system}
V_b^{(k)}\longrightarrow |\mathcal{O}_{\PP_G}(u_k)|
\end{equation*}
that sends the $b$--th slice payoff tensor $X^{(k)}_b$ to the polynomial $F_{a,b}^{(k)}$ defined by $X^{(k)}_a$ and $X^{(k)}_b$.  Note that the map \eqref{eq:map linear system} is linear but not projective. Therefore, the image is not a projective subspace, but its closure is.
We define the linear system $\Lambda_{a,b}^{(k)}(X^{(k)}_a)$ as the closure of the image of \eqref{eq:map linear system}.  Note that $\Lambda_{a,b}^{(k)}(X^{(k)}_a)$ depends on the slice payoff tensor $X^{(k)}_a$. Moreover, $\Lambda_{a,b}^{(k)}(X^{(k)}_a)$ is a linear subspace of $\Lambda_{a,b}^{(k)}$ and we have that
\[
\Lambda_{a,b}^{(k)}=
\bigcup_{X^{(k)}_a\in V_a^{(k)}} \Lambda_{a,b}^{(k)}(X^{(k)}_a).
\]

\noindent For a slice payoff tensor $X_a^{(k)}$, we denote the base locus of the linear system $\Lambda_{a,b}^{(k)}(X^{(k)}_a)$ by $B_{a,b}^{(k)}(X^{(k)}_a)$. 
Next, we calculate the equations of this base locus. First of all, note that if $k$ is an isolated vertex, the linear system $\Lambda_{a,b}^{(k)}(X^{(k)}_a)$ is complete, and hence, $B_{a,b}^{(k)}(X^{(k)}_a)$ is empty. Assume now that $k$ is not an isolated vertex and consider the linear system $W_{a,b}^{(k)}(X^{(k)}_a)$ of polynomials  of the form 
\begin{equation}\label{eq:det linear sys}
\left|\begin{matrix}
    \displaystyle\sum_{j\in\mathscr C_k}L_a^{(k)}&\displaystyle\sum_{j\in\mathscr C_k}X_{\cdots a\cdots}^{(k)}\mathfrak S_{j,a}^{(k)}\\
    &\\
    \displaystyle\sum_{j\in\mathscr C_k}L_b^{(k)}  &\displaystyle\sum_{j\in\mathscr C_k}X_{\cdots b\cdots}^{(k)}\mathfrak S_{j,b}^{(k)}
\end{matrix}\right|
\end{equation}
for any slice payoff tensor $X^{(k)}_b$.
As in \cite[Section 3]{portakal2024game}, the base locus of $W_{a,b}^{(k)}(X^{(k)}_a)$ and $B_{a,b}^{(k)}(X^{(k)}_a)$ coincide. To calculate this base locus we first describe the generators of $W_{a,b}^{(k)}(X^{(k)}_a)$.

\begin{lemma}\label{lemma:gens lin system}
    For a nonisolated vertex $k$ and a strategy $a\in[d_k]$, fix the slice payoff tensor $X^{(k)}_a$. Then, the linear system $W_{a,b}^{(k)}(X^{(k)}_a)$ is generated by the polynomials

    \begin{itemize}
        \item $ \mathfrak S_{j,b}^{(k)} L_{a}^{(k)}$ for $j\in\mathscr C_k
        $,
\item $\mathcal G_a^{(k)}\,L_b^{(k)}$,
    \end{itemize}
    where
    \[
 \mathcal G_a^{(k)}= \sum_{j\in\mathscr C_k} 
(X^{(k)}_{1\cdots a\cdots 1}-X^{(k)}_{j_1\cdots a\cdots j_n})
\mathfrak S_{i,a}^{(k)}.
    \]
\end{lemma}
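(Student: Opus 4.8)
The plan is to expand the determinant \eqref{eq:det linear sys} defining $W_{a,b}^{(k)}(X^{(k)}_a)$ and to track how it varies with the single free datum, the slice $X^{(k)}_b$, keeping $X^{(k)}_a$ frozen. Abbreviating the fixed second-column entry by $A:=\sum_{j\in\mathscr C_k}X^{(k)}_{j(a)}\,\mathfrak S^{(k)}_{j,a}$ and the varying one by $B:=\sum_{j\in\mathscr C_k}X^{(k)}_{j(b)}\,\mathfrak S^{(k)}_{j,b}$, a typical member of the system is
\[
F^{(k)}_{a,b}=\det\begin{pmatrix} L^{(k)}_a & A\\ L^{(k)}_b & B\end{pmatrix}=L^{(k)}_a B-L^{(k)}_b A .
\]
As $X^{(k)}_b$ ranges over $V^{(k)}_b$ the coefficients $\bigl(X^{(k)}_{j(b)}\bigr)_{j\in\mathscr C_k}$ are unconstrained, so $B$ sweeps out the entire linear span of the monomials $\mathfrak S^{(k)}_{j,b}$. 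The first point I would stress is that $X^{(k)}_b\mapsto F^{(k)}_{a,b}$ is therefore an \emph{affine}, not linear, map: its linear part is $B\mapsto L^{(k)}_a B$ and it carries a constant term $-L^{(k)}_b A$, obtained by setting $B=0$.

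Next I would read off the underlying vector space $U$ whose projectivization is $W_{a,b}^{(k)}(X^{(k)}_a)$, namely the linear span of this affine image. Putting $X^{(k)}_b=0$ shows $L^{(k)}_b A\in U$; taking $X^{(k)}_b$ supported on a single index $j\in\mathscr C_k$ gives $L^{(k)}_a\mathfrak S^{(k)}_{j,b}-L^{(k)}_b A$, and subtracting the base point places each $L^{(k)}_a\mathfrak S^{(k)}_{j,b}$ in $U$. Since these manifestly recover every $F^{(k)}_{a,b}$, one obtains
\[
U=\bigl\langle\, L^{(k)}_a\mathfrak S^{(k)}_{j,b} : j\in\mathscr C_k \,\bigr\rangle+\bigl\langle\, L^{(k)}_b A \,\bigr\rangle .
\]
This is already a generating set, matching the first stated family together with one base-point generator $L^{(k)}_b A$; it remains only to rewrite that base point in the claimed symmetric form.

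For the rewriting I would use the definition of $\mathcal G^{(k)}_a$ together with $L^{(k)}_a=\sum_{j}\mathfrak S^{(k)}_{j,a}$, which gives $\mathcal G^{(k)}_a=X^{(k)}_{1\cdots a\cdots 1}L^{(k)}_a-A$, hence
\[
\mathcal G^{(k)}_a L^{(k)}_b=X^{(k)}_{1\cdots a\cdots 1}\,L^{(k)}_a L^{(k)}_b-L^{(k)}_b A .
\]
Because $L^{(k)}_b=\sum_{j}\mathfrak S^{(k)}_{j,b}$, the product $L^{(k)}_a L^{(k)}_b=\sum_{j}L^{(k)}_a\mathfrak S^{(k)}_{j,b}$ already lies in the first family; thus $\mathcal G^{(k)}_a L^{(k)}_b\equiv -L^{(k)}_b A$ modulo that family, and trading $L^{(k)}_b A$ for $\mathcal G^{(k)}_a L^{(k)}_b$ leaves $U$ unchanged. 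This produces exactly the two stated families of generators.

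The one genuinely delicate point, which I would flag as the crux rather than the routine determinant expansion, is the affine-versus-linear distinction: because $X^{(k)}_a$ is held fixed, the constant term $L^{(k)}_b A$ is a bona fide extra generator that a naive ``linear'' treatment would miss, and recognizing that it can be absorbed into the clean polynomial $\mathcal G^{(k)}_a L^{(k)}_b$ via the redundancy of $L^{(k)}_a L^{(k)}_b$ is what makes the statement land in symmetric form. I would also note explicitly that I am relying on the already-recorded reduction that $W^{(k)}_{a,b}(X^{(k)}_a)$ and $\Lambda^{(k)}_{a,b}(X^{(k)}_a)$ share the same base locus, which is what licenses dropping the $\mathfrak D^{(k)}_{j'}$ factors from the outset.
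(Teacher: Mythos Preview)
Your argument is correct and follows essentially the same approach as the paper: both expand the $2\times 2$ determinant, separate the fixed contribution (depending on $X^{(k)}_a$) from the part that varies freely with $X^{(k)}_b$, and read off generators. The only cosmetic differences are that the paper expands via the coefficients $A^{(k)}_{i,j}$ from \cite{portakal2024game} and chooses the base point $X^{(k)}_{j(b)}\equiv X^{(k)}_{1\cdots a\cdots 1}$ (so that $g=L^{(k)}_b\mathcal G^{(k)}_a$ appears immediately), whereas you use the direct expansion $L^{(k)}_aB-L^{(k)}_bA$ with base point $X^{(k)}_b=0$ and then swap $L^{(k)}_bA$ for $\mathcal G^{(k)}_aL^{(k)}_b$; your route is slightly more self-contained.
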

\begin{proof}
    As in the proof of \cite[Lemma 7]{portakal2024game}, we can write the determinant \eqref{eq:det linear sys} as 
    \begin{equation}\label{eq:expand det}
    \begin{array}{c}
\displaystyle  A^{(k)}_{\mathbbm{1},\mathbbm{1}}\mathfrak S_{\mathbbm{1},a}^{(k)}\mathfrak S_{\mathbbm{1},b}^{(k)}+
\sum_{i\in\mathscr C_k\setminus\{\mathbbm{1}\}} A^{(k)}_{i,\mathbbm{1}}\mathfrak S_{i,a}^{(k)}\mathfrak S_{\mathbbm{1},b}^{(k)}+
\sum_{j\in\mathscr C_k\setminus\{\mathbbm{1}\}} A^{(k)}_{\mathbbm{1},j}\mathfrak S_{\mathbbm{1},a}^{(k)}\mathfrak S_{j,b}^{(k)}+\\ \\ \displaystyle+
\sum_{i,j\in\mathscr C_k\setminus\{\mathbbm{1}\}}(A^{(k)}_{i,\mathbbm{1}}+ A^{(k)}_{\mathbbm{1},j}-A^{(k)}_{\mathbbm{1},\mathbbm{1}}  )
\mathfrak S_{i,a}^{(k)}\mathfrak S_{j,b}^{(k)}
\end{array},
    \end{equation}
    where $\mathbbm{1}=(1,\cdots 1)$ and 
    $
    A_{i,j}^{(k)}=X^{(k)}_{j_1\cdots b\cdots j_n}-X^{(k)}_{i_1\cdots a\cdots i_n}.
    $
Since the slice payoff tensor $X^{(k)}_b$ is free of choice, the coefficients of the form $A^{(k)}_{\mathbbm{1},j}$  in \eqref{eq:expand det} are not subject to any restriction. However,
since the slice payoff tensor $X^{(k)}_a$ is fixed,  all the coefficients $A^{(k)}_{i,\mathbbm{1}}=A^{(k)}_{\mathbbm{1},\mathbbm{1}}+X_{1\cdots a\cdots1}^{(k)}-X_{i_1\cdots a\cdots i_n}^{(k)}$ with $i\neq \mathbbm{1}$ are fixed. 
Let $g$ be the polynomial defined by the fixed coefficients, that is
\[
g= 
\sum_{i\in\mathscr C_k\setminus\{\mathbbm{1}\}} (A^{(k)}_{i,\mathbbm{1}}-A^{(k)}_{\mathbbm{1},\mathbbm{1}})\mathfrak S_{i,a}^{(k)}\mathfrak S_{\mathbbm{1},b}^{(k)}
+
\sum_{i,j\in\mathscr C_k\setminus\{\mathbbm{1}\}}(A^{(k)}_{i,\mathbbm{1}}-A^{(k)}_{\mathbbm{1},\mathbbm{1}})
\mathfrak S_{i,a}^{(k)}\mathfrak S_{j,b}^{(k)}=L_b^{(k)}
\,\mathcal G_a^{(k)}(X_a^{(k)})
\]
This polynomial is achieved in the linear system by setting all the free coefficients to be zero, including $A^{(k)}_{\mathbbm{1},\mathbbm{1}}$. In other words, the polynomial $g$ is obtained when $X_{j_1\cdots b\cdots j_n}^{(k)}=X_{1\cdots a\cdots 1}^{(k)}$, and isolating $g$ in \eqref{eq:expand det} is the same as replacing the $A^{(k)}_{i,\mathbbm{1}}$ by $A^{(k)}_{\mathbbm{1},\mathbbm{1}}$.
Now, we can write the polynomial \eqref{eq:expand det} as
\[
g+A^{(k)}_{\mathbbm{1},\mathbbm{1}}L^{(k)}_a\mathfrak S_{\mathbbm{1},b}^{(k)}
+\sum_{j\in\mathscr C_k\setminus\{\mathbbm{1}\}} A^{(k)}_{\mathbbm{1},j}\mathfrak S_{\mathbbm{1},a}^{(k)}\mathfrak S_{j,b}^{(k)}+
\sum_{i,j\in\mathscr C_k\setminus\{\mathbbm{1}\}}A^{(k)}_{\mathbbm{1},j}
\mathfrak S_{i,a}^{(k)}\mathfrak S_{j,b}^{(k)}=g+\sum_{j\in\mathscr C_k} A^{(k)}_{\mathbbm{1},j}\mathfrak S_{j,b}^{(k)}L^{(k)}_a
\]
where $A^{(k)}_{\mathbbm{1},j}$ are free parameters. Since $W_{a,b}^{(k)}(X^{(k)}_a)$ is a projective subspace, the polynomials of the form
\[
\sum_{j\in\mathscr C_k} A^{(k)}_{\mathbbm{1},j}\mathfrak S_{j,b}^{(k)}L^{(k)}_a
\]
are also in the linear system for any $A^{(k)}_{\mathbbm{1},j}$. By setting all coefficients to zero except $A^{(k)}_{\mathbbm{1},j}$, we get the desired polynomials.
\end{proof}

\noindent We next calculate the equations of $B_{a,b}^{(k)}(X^{(k)}_a)$.

\begin{lemma}\label{lem:base locus}
        For a nonisolated vertex $k$ and a strategy $a\in[d_k]$, fix the slice payoff tensor $X^{(k)}_a$. Then, the base locus $B_{a,b}^{(k)}(X^{(k)}_a)$ is the union
\[
\mathbb{V}(L_a^{(k)},\mathcal G_a^{(k)})\cup\mathbb{V}(L_a^{(k)},L_b^{(k)})\cup\mathbb{V}(\mathfrak S_{j,b}^{(k)}:j\in\mathscr C_k).
\]
\end{lemma}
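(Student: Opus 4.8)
The base locus of a linear system is the common zero locus of any generating set, so the plan is to take the generators of $W_{a,b}^{(k)}(X^{(k)}_a)$ supplied by Lemma~\ref{lemma:gens lin system} --- namely $\mathfrak S_{j,b}^{(k)}L_a^{(k)}$ for $j\in\mathscr C_k$ together with $\mathcal G_a^{(k)}L_b^{(k)}$ --- and to show by a direct double inclusion that their common vanishing locus is the claimed union. The one structural fact I would lean on throughout is the summation identity $L_b^{(k)}=\sum_{j\in\mathscr C_k}\mathfrak S_{j,b}^{(k)}$ from \eqref{eq: poly L}, which ties the vanishing of $L_b^{(k)}$ to the simultaneous vanishing of all the $\mathfrak S_{j,b}^{(k)}$.

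For the inclusion of the base locus into the claimed union, I would observe that a point $p$ lies in $B_{a,b}^{(k)}(X^{(k)}_a)$ exactly when $\mathfrak S_{j,b}^{(k)}(p)\,L_a^{(k)}(p)=0$ for every $j\in\mathscr C_k$ and $\mathcal G_a^{(k)}(p)\,L_b^{(k)}(p)=0$, and then split on whether $L_a^{(k)}(p)=0$. If $L_a^{(k)}(p)=0$, the first family of equations is automatic, while the second forces $\mathcal G_a^{(k)}(p)=0$ or $L_b^{(k)}(p)=0$, placing $p$ in $\mathbb{V}(L_a^{(k)},\mathcal G_a^{(k)})\cup\mathbb{V}(L_a^{(k)},L_b^{(k)})$. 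If instead $L_a^{(k)}(p)\neq0$, the first family forces $\mathfrak S_{j,b}^{(k)}(p)=0$ for all $j$, so $p\in\mathbb{V}(\mathfrak S_{j,b}^{(k)}:j\in\mathscr C_k)$; the summation identity then gives $L_b^{(k)}(p)=0$, so the remaining equation holds for free.

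For the reverse inclusion I would verify the three pieces one at a time. On $\mathbb{V}(\mathfrak S_{j,b}^{(k)}:j\in\mathscr C_k)$ the first factor of each $\mathfrak S_{j,b}^{(k)}L_a^{(k)}$ vanishes, and $L_b^{(k)}$ vanishes by the summation identity, so both generator types vanish. On $\mathbb{V}(L_a^{(k)},L_b^{(k)})$ every generator contains $L_a^{(k)}$ or $L_b^{(k)}$ as a factor and hence vanishes. On $\mathbb{V}(L_a^{(k)},\mathcal G_a^{(k)})$ the factor $L_a^{(k)}$ kills the first type and $\mathcal G_a^{(k)}$ kills the second. Combining the two inclusions yields the asserted equality.

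The argument is essentially bookkeeping and mirrors the binary computation in \cite[Section~3]{portakal2024game}; the only point that requires care is recognizing that the generator $\mathcal G_a^{(k)}L_b^{(k)}$ becomes redundant precisely on the locus where all $\mathfrak S_{j,b}^{(k)}$ vanish, which is what makes the two sub-cases with $L_a^{(k)}\neq0$ collapse into the single component $\mathbb{V}(\mathfrak S_{j,b}^{(k)}:j\in\mathscr C_k)$. I do not anticipate a genuine obstacle, so the main thing to watch is simply keeping the indices $a$, $b$, $j$ and the clique labels straight.
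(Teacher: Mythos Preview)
Your proposal is correct and follows essentially the same route as the paper: both proofs invoke the generators of $W_{a,b}^{(k)}(X^{(k)}_a)$ from Lemma~\ref{lemma:gens lin system}, split on whether $L_a^{(k)}$ vanishes, and read off the three components accordingly. You are somewhat more explicit about the reverse inclusion and the role of the identity $L_b^{(k)}=\sum_{j}\mathfrak S_{j,b}^{(k)}$, whereas the paper leaves these as implicit consequences of the case analysis, but the argument is the same.
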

\begin{proof}
As in \cite[Section 3]{portakal2024game}, $B_{a,b}^{(k)}(X^{(k)}_a)$ and the base locus of $W_{a,b}^{(k)}(X^{(k)}_a)$ coincide. So it is enough to calculate the base locus of $W_{a,b}^{(k)}(X^{(k)}_a)$. Using the generators of the linear system described in Lemma \ref{lemma:gens lin system}, we distinguish two cases depending on whether $L_a^{(k)}$ is zero or not. If $L_{a}^{(k)}=0$, we obtain the variety $\mathbb{V}(L_{a}^{(k)},\mathcal G_a^{(k)}L_b^{(k)})$, which decompose into the irreducible components $\mathbb{V}(L_{a}^{(k)},\mathcal G_a^{(k)})$ and $\mathbb{V}(L_{a}^{(k)},L_b^{(k)})$. If $L_{a}^{(k)}\neq 0$, we get that $\mathfrak S_{j,b}^{(k)}=0$ for every $j\in\mathscr C_k$ which leads to the component $\mathbb{V}(\mathfrak S_{j,b}^{(k)}:j\in\mathscr C_k).$
\end{proof}

\subsection{Computation of the dimension}\label{subsec:dim}

To calculate the dimension of a generic Spohn CI variety, we use the linear systems introduced in 
Subsection \ref{subsec:linsys} and Bertini's Theorem. For the sake of clarity, we recall the version of Bertini's theorem used in this paper.

\begin{theorem}[{\cite[Theorem 8.18]{hartshorne2013algebraic}}]\label{theo:Bertini}
    Let $Z$ be a $d$--dimensional complex projective variety contained in a Segre variety $\PP$ and let $W$ be a nonzero linear system of $\PP$. Then
    \begin{itemize}
        \item for a generic divisor $D\in W$, every irreducible component of $D\cap Z$ not contained in the base locus of $W$  has dimension $d-1$.
        \item if $Z$ is irreducible and $d\geq 2$, then for a generic divisor $D\in W$, $D\cap Z$ is irreducible away from the base locus of $W$.
        \item for a generic divisor $D\in W$, $D\cap Z$ is smooth away from the base locus of $W$ and the singular locus of $Z$.
    \end{itemize}
\end{theorem}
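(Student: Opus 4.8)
The plan is to treat all three assertions through the \emph{incidence correspondence} attached to $W$, reserving the classical connectedness theorem for the irreducibility statement. Write $W=\PP(V)$ for a nonzero subspace $V\subseteq H^0(\PP,\mathcal L)$, where $\mathcal L=\mathcal O_{\PP}(\mathbf u)$ is the very ample line bundle whose sections cut out the divisors of the system, so that each $D\in W$ is the zero scheme $D_s=\{s=0\}$ of some $0\ne s\in V$. I would form
\[
\mathcal I=\{(x,[s])\in Z\times W \mid s(x)=0\},
\]
with its two projections $q\colon\mathcal I\to Z$ and $p\colon\mathcal I\to W$, so that the fiber $p^{-1}([s])$ is exactly $Z\cap D_s$. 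The base locus is $\mathrm{Bs}(W)\cap Z=\{x\in Z\mid s(x)=0 \text{ for all } s\in V\}$, and over any $x\in Z\setminus\mathrm{Bs}(W)$ the fiber $q^{-1}(x)$ is the hyperplane $\{s\mid s(x)=0\}\cong\PP^{\dim W-1}$; this structural fact drives the whole argument.

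For the dimension statement I would argue as follows. For generic $[s]$ the divisor $D_s$ contains no irreducible component $Z_0$ of $Z$ with $Z_0\not\subseteq\mathrm{Bs}(W)$, since the sections vanishing identically on such a $Z_0$ form a proper linear subspace of $V$. Hence on each such $Z_0$ the restriction $s|_{Z_0}$ is a nonzero section, and $Z_0\cap D_s$ is locally cut out by the single nonzero equation $s$. Krull's principal ideal theorem then forces every component of $Z_0\cap D_s$ to have codimension at most one, i.e.\ dimension at least $d-1$, while the nonvanishing of $s|_{Z_0}$ prevents any component from being all of $Z_0$; thus every component has dimension exactly $d-1$. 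Components lying inside $\mathrm{Bs}(W)$ are precisely those excluded by the statement.

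For smoothness I would restrict to the smooth, base-point-free locus $U:=Z^{\mathrm{sm}}\setminus\mathrm{Bs}(W)$. Over $U$ the restriction $\mathcal I_U:=q^{-1}(U)$ is a projective subbundle of $U\times W$ with hyperplane fibers, hence $\mathcal I_U$ is smooth of dimension $d+\dim W-1$. Applying the generic smoothness theorem in characteristic zero (\cite[III.10.7]{hartshorne2013algebraic}) to the dominant map $p|_{\mathcal I_U}\colon\mathcal I_U\to W$—component by component if $Z$ is reducible—yields a dense open $W^\circ\subseteq W$ over which the fibers are smooth of dimension $d-1$. For $[s]\in W^\circ$ this fiber is $D_s\cap U=(Z\cap D_s)\setminus(\mathrm{Bs}(W)\cup\mathrm{Sing}(Z))$, which is exactly the asserted smoothness away from the base locus and $\mathrm{Sing}(Z)$.

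The irreducibility statement is the genuine obstacle, because—unlike the other two—it is not a formal consequence of dimension counts and Sard's theorem. Assuming $Z$ irreducible, $d\ge 2$, and $Z\not\subseteq\mathrm{Bs}(W)$, the open piece $q^{-1}(Z\setminus\mathrm{Bs}(W))$ of $\mathcal I$ is an irreducible projective bundle over the irreducible base $Z\setminus\mathrm{Bs}(W)$, so $\mathcal I$ is irreducible and $p$ is dominant. The generic fiber of $p$ is then irreducible precisely when $k(W)$ is algebraically closed in $k(\mathcal I)$, and I would establish this by analysing the geometric generic fiber: characteristic-zero generic smoothness (from the previous paragraph) makes it regular, hence a disjoint union of integral schemes indexed by its connected components, while the Bertini--Zariski connectedness theorem—proved via the Stein factorization $\mathcal I\to W'\to W$ and Zariski's Main Theorem, using $d\ge 2$ to keep a general section of the irreducible $Z$ connected—shows that this geometric generic fiber is connected, forcing $W'\to W$ to have degree one and the fiber to be integral. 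Since irreducibility of the fiber is a generic property, the locus of $[s]$ with $(Z\cap D_s)\setminus\mathrm{Bs}(W)$ irreducible contains a dense open, giving the claim for a generic divisor. The delicate points I expect to fight with are precisely the bookkeeping of the base locus and of $\mathrm{Sing}(Z)$: one must ensure that both the connectedness and the smooth dense open survive on the relevant locus away from $\mathrm{Bs}(W)$, so that ``connected plus generically smooth'' genuinely upgrades to ``irreducible.''
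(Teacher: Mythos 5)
The paper contains no proof of this statement: Theorem~\ref{theo:Bertini} is imported as a classical black box with a citation to Hartshorne, so the only comparison available is with the standard literature argument, which your incidence-correspondence scheme essentially reproduces. Your first and third bullets are correct and complete in outline: genericity avoids the proper linear subspaces of sections vanishing on the components of $Z$ not inside $\mathrm{Bs}(W)$, purity of Cartier divisors (every component of the zero locus of a nonzero section of a line bundle on an irreducible variety has codimension exactly one) gives the dimension count, and the projectivized-kernel bundle over $Z^{\mathrm{sm}}\setminus\mathrm{Bs}(W)$ together with characteristic-zero generic smoothness gives the smoothness bullet. Two smaller repairs are needed along the way: $\mathcal I$ itself need not be irreducible, since over $\mathrm{Bs}(W)\cap Z$ the $q$-fibers are all of $W$, and a $(d-1)$-dimensional component of $\mathrm{Bs}(W)\cap Z$ contributes a component of $\mathcal I$ of the full dimension $d+\dim W-1$; the Stein factorization must therefore be run on $\mathcal I':=\overline{q^{-1}(Z\setminus\mathrm{Bs}(W))}$. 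Moreover $p|_{\mathcal I'}$ can fail to dominate $W$ (take a pencil $\langle s_0,s_1\rangle$ with $s_0|_Z\equiv 0$ and $s_1|_Z\not\equiv 0$: then $q^{-1}(x)=\{(x,[s_0])\}$ for all $x\in Z\setminus\mathrm{Bs}(W)$ and the image of $p|_{\mathcal I'}$ is a point), although in that degenerate situation every assertion is vacuous away from the base locus, so a sentence disposing of it suffices.

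The genuine gap is the connectedness input in your second bullet, and it is not the bookkeeping issue you anticipate: the step ``using $d\ge 2$ to keep a general section of the irreducible $Z$ connected'' is \emph{false} at the generality at which the theorem is stated. Take $Z=\PP=\PP^1\times\PP^1$ and $W=|\mathcal O_{\PP}(0,2)|$: this is a nonzero, base-point-free linear system on an irreducible surface, yet every generic member is $\PP^1\times\{p_1\}\sqcup\PP^1\times\{p_2\}$, a disjoint pair of lines, so the geometric generic fiber of $p$ is disconnected and the Stein factor $W'\to W$ has degree two rather than one. Bertini irreducibility requires, beyond $d\ge 2$, that the system restricted to $Z$ not be composite with a pencil, i.e.\ that the rational map $Z\dashrightarrow W$ defined by the system have image of dimension at least two. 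This is automatic in the setting Hartshorne actually proves (the complete very ample system, where the map embeds $Z$ and $d\ge 2$ does the work), but it is not implied by the hypotheses as quoted; nor does your assumption that $\mathcal L$ is very ample rescue it, since $W$ is allowed to be, and in the paper's applications genuinely is, a proper subsystem. So bullets one and three are provable exactly as you outline, while bullet two requires an extra non-composedness hypothesis both in your argument and in the statement itself; the paper's ``proof'' (the citation) silently elides this point, and its later invocations of the irreducibility bullet, e.g.\ in Theorem~\ref{theo: irreducibility}, implicitly rely on the relevant restricted systems having at least two-dimensional image.
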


We consider the integer vector $\mathbf{a}:=(a_k)_{k\in[n]}\in \mathcal{R}_{[n]}$. In other words, the integer vector $\mathbf{a}$ consists of a choice of strategy for each player. For such vector, we consider the variety
\[
    Y_{X,G,\mathbf{a}}:=\mathbb{V}(F_{a_k,b}^{(k)}\,:\,k\in[n]\text{ and } b\in[d_k]\setminus\{a_k\}).
    \]
    In other words, $Y_{X,G,\mathbf{a}}$ is the variety defined by the $2\times 2$ minors of the matrix $\widetilde{M}^{(k)}$ containing the $a_k$-th row for each player $k$. Therefore, we have 
    \begin{equation}\label{eq:intersection}
    Y_{X,G}:= \bigcap_{\mathbf{a}\in \mathcal{R}_G} Y_{X,G,\mathbf{a}}.
    \end{equation}

\noindent Using Lemma \ref{lem:base locus} and these varieties, we derive the dimension of Spohn CI varieties.

\begin{theorem}\label{theo:dimension}
Let $G$ be a graph with $n$ vertices. Then, either $\mathcal{V}_{X,G}$ is empty for a generic game $X$, or $\mathrm{codim}_{\mathcal{M}_G}\,\mathcal{V}_{X,G} = d_1+\cdots+d_n-n$ for a generic game $X$.
\end{theorem}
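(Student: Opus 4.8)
The plan is to compute the codimension of $Y_{X,G}$ in the torus $\mathbb{T}_G$ and transfer this to $\mathcal{V}_{X,G}\subseteq\mathcal{M}_G$ via the monomial map $\phi_G$. Since $\phi_G:\mathbb{T}_G\to\mathcal{M}_G$ is a finite-to-one dominant map between varieties of the same dimension, it preserves codimension, so it suffices to prove that $Y_{X,G}$ has codimension $\sum_k(d_k-1)=d_1+\cdots+d_n-n$ in $\mathbb{T}_G$ (i.e.\ that each player $k$ contributes $d_k-1$ to the codimension) whenever $Y_{X,G}$ is nonempty, and that $\widetilde{\mathcal{V}}_{X,G}=Y_{X,G}$ for generic $X$ so the removed components do not affect the count. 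The decomposition $Y_{X,G}=\bigcap_{\mathbf{a}}Y_{X,G,\mathbf{a}}$ in \eqref{eq:intersection} lets me focus on a single fixed strategy vector $\mathbf{a}$, where for each player $k$ I must cut by the $d_k-1$ divisors $F^{(k)}_{a_k,b}$ for $b\neq a_k$.

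First I would fix a strategy vector $\mathbf{a}$ and build $Y_{X,G,\mathbf{a}}$ by intersecting, player by player and then strategy by strategy, with the divisors coming from the linear systems $\Lambda^{(k)}_{a_k,b}(X^{(k)}_{a_k})$. The inductive step is an application of Bertini's Theorem (Theorem~\ref{theo:Bertini}): starting from $Z=\mathbb{T}_G$ (or its closure $\PP_G$, which is irreducible of the full dimension), each generic divisor $F^{(k)}_{a_k,b}$ cuts the dimension down by exactly one, \emph{provided} the current intersection is not entirely contained in the relevant base locus $B^{(k)}_{a_k,b}(X^{(k)}_{a_k})$ described in Lemma~\ref{lem:base locus}. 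After performing all $d_k-1$ cuts for player $k$ and ranging over all players, a generic $\mathbf{a}$--fiber will drop in dimension by $\sum_k(d_k-1)$, giving $\mathrm{codim}_{\mathbb{T}_G}Y_{X,G,\mathbf{a}}=d_1+\cdots+d_n-n$, hence the same bound for the intersection $Y_{X,G}$. The key point is that the base loci from Lemma~\ref{lem:base locus}, being cut out by $\{L^{(k)}_a=0\}$, $\{\mathfrak S^{(k)}_{j,b}=0\}$ and $\{L^{(k)}_b=0\}$, all lie inside the pullback of the forbidden hyperplanes $\mathcal{W}$; so the components of $Y_{X,G}$ meeting the torus $\mathbb{T}_G$ (equivalently contributing to $\widetilde{\mathcal{V}}_{X,G}$) are automatically the ones Bertini controls away from the base locus, and they have the predicted codimension.

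The main obstacle will be the bookkeeping needed to guarantee that at each stage the intersection we are cutting is genuinely \emph{not contained} in the base locus, so that Bertini produces a genuine dimension drop rather than a vacuous statement. This is exactly where the genericity of $X$ enters: I would argue that for generic $X$ the successive intersections remain reduced, of expected dimension, and meet the torus, so no divisor becomes superfluous and no component collapses into $\mathcal{W}$. The subtlety is that the linear systems $\Lambda^{(k)}_{a_k,b}(X^{(k)}_{a_k})$ are \emph{not} complete (as emphasized before Lemma~\ref{lemma:gens lin system}, the image is only linear, not projective, and depends on the fixed slice $X^{(k)}_{a_k}$), so I cannot invoke a naive ``generic hyperplane section'' argument; instead I must use the explicit generators from Lemma~\ref{lemma:gens lin system} to verify that, outside its base locus, the system genuinely moves. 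Care is also needed because the per-player cuts share variables across players through the clique structure, so independence of the dimension drops must be checked using the multidegrees $u_k$ rather than assumed.

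Finally, to conclude I would combine the codimension bound with the fact from \cite[Theorem 6]{portakal2022geometry} that the Spohn variety $\mathcal{V}_X$ has codimension $d_1+\cdots+d_n-n$ in $\PP^{d_1\cdots d_n-1}$: this gives the matching lower bound on $\mathrm{codim}_{\mathcal{M}_G}\mathcal{V}_{X,G}$, since intersecting with $\mathcal{M}_G$ can only increase (or preserve) codimension, while the Bertini construction shows the codimension is at most $d_1+\cdots+d_n-n$. When the two bounds meet, either the variety is empty (the degenerate alternative in the statement, occurring when the generic fiber is empty as in Remark~\ref{rem: nash emptyness}) or it attains exactly codimension $d_1+\cdots+d_n-n$, which is the claimed dichotomy. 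The independence of the answer from $G$ then follows because the total codimension $\sum_k(d_k-1)$ depends only on the strategy dimensions and not on the clique decomposition.
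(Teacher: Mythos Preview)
Your overall architecture matches the paper's, but there are two genuine gaps. First, your claim that the base loci from Lemma~\ref{lem:base locus} ``all lie inside the pullback of the forbidden hyperplanes $\mathcal{W}$'' is false: the components $\mathbb{V}(L_a^{(k)},L_b^{(k)})$ and $\mathbb{V}(L_a^{(k)},\mathcal{G}_a^{(k)})$ are \emph{not} contained in the pullback of $\{p_{+\cdots+}=0\}$, since that pullback involves $\sum_{c}L_c^{(k)}$ rather than any single $L_a^{(k)}$. The paper's crucial step is to show that although no single $B_{\mathbf{a}}$ lies in the pullback of $\mathcal{W}$, the intersection $B=\bigcap_{\mathbf{a}\in\mathcal{R}_{[n]}}B_{\mathbf{a}}$ does. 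This uses a pointwise argument: given $p\in B$, one assumes $p\notin\mathbb{V}(L_a^{(k)}:a\in[d_k])$ for every $k$, picks for each $k$ an $a_k$ with $L_{a_k}^{(k)}(p)\neq0$, and derives a contradiction from $p\in B_{\mathbf{a}}$ for this particular $\mathbf{a}$. Only then does one conclude $p\in\mathbb{V}(L_a^{(k)}:a\in[d_k])$ for some $k$, hence $p$ lies in the pullback of $\{p_{+\cdots+}=0\}$. Your proposal skips this entirely.

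Second, your plan asserts $\widetilde{\mathcal{V}}_{X,G}=Y_{X,G}$ for generic $X$, but Example~\ref{ex: wrong saturaion} (the path graph) shows this fails in general; what is true, and what the paper uses, is only that the two varieties agree \emph{away from} the pullback of $\mathcal{W}$. Finally, your last paragraph has the inequalities reversed: the codimension bound from \cite[Theorem~6]{portakal2022geometry} gives the \emph{upper} bound $\mathrm{codim}_{\mathcal{M}_G}\mathcal{V}_{X,G}\leq d_1+\cdots+d_n-n$ (since every component of $\mathcal{V}_X\cap\mathcal{M}_G$ has dimension at least $\dim\mathcal{M}_G-\mathrm{codim}\,\mathcal{V}_X$), while Bertini supplies the \emph{lower} bound away from $B$.
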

\begin{proof}
Assume that  $\mathcal{V}_{X,G}$ is nonempty for a generic game $X$.
We follow a similar structure as \cite[Theorem 9]{portakal2024game}.
By \cite[Theorem 6]{portakal2022geometry}, the codimension of $\mathcal{V}_{X}$ is $d_1+\cdots+d_n-n$. Now, since
every irreducible component of the Spohn CI variety is an irreducible component of $\mathcal{M}_G\cap \mathcal{V}_{X}$, we deduce that
\[
\mathrm{codim}_{\mathcal{M}_G}\,\mathcal{V}_{X,G} \leq d_1+\cdots + d_n-n.
\]
Recall that $\widetilde{\mathcal{V}}_{X,G}$ is the pullback of the Spohn CI variety via $\phi_G$. In particular, 
\[
\mathrm{codim}_{\mathbb{T}_G}\,\widetilde{\mathcal{V}}_{X,G}\leq\mathrm{codim}_{\mathcal{M}_G}\,\mathcal{V}_{X,G} \leq d_1+\cdots + d_n-n.
\]

Now, $\widetilde{\mathcal{V}}_{X,G}$ does not have any irreducible component contained in the pullback via $\phi_G$ of the hyperplanes of the form $\{p_{j_1\ldots j_n}=0\} $ and $\{p_{+\cdots+}=0\}$. Therefore, it is enough to show that $d_1+\cdots + d_n-n\leq\mathrm{codim}_{\mathbb{T}_G}\,\widetilde{\mathcal{V}}_{X,G}$ away from these hyperplanes.  Moreover, since the varieties  $Y_{X,G}$  and $\widetilde{\mathcal{V}}_{X,G}$ coincide away from these hyperplanes, it is enough to show that 
$d_1+\cdots + d_n-n\leq\mathrm{codim}_{\mathbb{T}_G}\,Y_{X,G}$.
By Bertini's Theorem
the variety $Y_{X,G,\mathbf{a}}$ has codimension  $d_1+\cdots + d_n-n$ away from the union of the base loci
\begin{equation}\label{eq: B a}
B_{\mathbf{a}}:=\bigcup_{k\in[n]}\,\,\bigcup_{b\in[d_k]\setminus\{a_k\}}B_{a_k,b}^{(k)}(X_{a_k}^{(k)})
\end{equation}
for every $\mathbf{a}\in\mathcal{R}_{[n]}$. By \eqref{eq:intersection}, we deduce that all irreducible components of $Y_{X,G}$ away from each $B_{\mathbf{a}}$ have codimension at least $d_1+\cdots + d_n-n$; hence, considering their union, $Y_{X,G}$ has codimension at least $d_1+\cdots + d_n-n$ away from
\begin{equation}\label{eq:big intersection}
B:=\bigcap_{\mathbf{a}\in\mathcal{R}_{[n]}}B_{\mathbf{a}}.
\end{equation}
To show that $Y_{X,G}$ has the correct codimension away from the pullback of the hyperplanes of the form $\{p_{j_1\ldots j_n}=0\} $ and $\{p_{+\cdots+}=0\}$, it is enough to check that $B$ is contained in this pullback. By Lemma \ref{lem:base locus}, $B_{a,b}^{(k)}(X_a^{(k)})$ has $3$ irreducible components. In particular, the component $\mathbb{V}(\mathfrak S_{j,b}^{(k)}:j\in\mathscr C_k)$ is already contained in a hyperplane of the form $\{p_{j_1\ldots j_n}=0\} $. Therefore, we may replace 
$B_{a_k,b}^{(k)}(X_{a_k}^{(k)})$ by 
\begin{equation}\label{eq:def new base locus}
B_{a_k,b}^{(k)}(X_{a_k}^{(k)})':= \mathbb{V}(L_{a_k}^{(k)},\mathcal G_{a_k}^{(k)})\cup\mathbb{V}(L_{a_k}^{(k)},L_b^{(k)})
\end{equation}
in the definition of $B_\mathbf{a}$ and $B$. Let $p$ be a point in $B$. We claim that $p$ is contained in $\mathbb{V}(L_a^{(k)}: a\in [d_k])$ for some player $k\in [n]$. Assume on the contrary that $p\not \in \mathbb{V}(L_a^{(k)}: a\in [d_k])$ for every $k\in [n]$. Thus, for every $k\in[n]$ there exists $a_k\in [d_k]$ such that $p\not\in \mathbb{V}(L_{a_k}^{(k)})$. Consider the integer vector $\mathbf{a}=(a_k)_{k\in[n]}$. By construction, $p$ is contained in $B_\mathbf{a}$, and hence, there exist $k\in [n]$ and $b\in[d_k]\setminus\{a_k\}$ such that 
$p\in B_{a_k,b}^{(k)}(X_{a_k}^{(k)})'$. By \eqref{eq:def new base locus}, we deduce that $p\in B_{a_k,b}^{(k)}(X_{a_k}^{(k)})'\subseteq \mathbb{V}(L_{a_k}^{(k)})$, which is a contradiction. We conclude that $p$ is contained in $\mathbb{V}(L_a^{(k)}: a\in [d_k])$ for some player $k\in [n]$. 
In particular, we get that $p\in \mathbb{V}(
\sum_{a\in [d_k]}L_a^{(k)} )$. Using that the evaluation of $\phi_G$ at $p_{+\cdots+}$ factors as:
\[
\phi_G^{*}\,p_{+\cdots+} = \left( \sum_{a\in [d_k]}L_a^{(k)}\right)\left(
\sum_{j'\in\mathscr D_k}\mathfrak D_{j'}^{(k)}\right),
\]
we deduce that $p$ is contained in the pullback of the hyperplane $\{p_{+\cdots+}=0\}$. Hence, $B$ is also contained in this pullback.
\end{proof}

\noindent Similarly to \cite[Section 3]{portakal2024game}, we can derive some partial results about the smoothness of Spohn CI varieties using the arguments of the proof of Theorem \ref{theo:dimension}. 

\begin{theorem}\label{theo:smooth}
    Let $G$ be a graph with $n$ vertices. Then, for generic games the Spohn CI variety $\mathcal{V}_{X,G}$ is smooth away from the hyperplanes of the form $\{p_{j_1\ldots j_n}=0\} $ and $\{p_{+\cdots+}=0\}$. In particular, the set of totally mixed CI equilibria is either empty for generic games or a smooth manifold of codimension $d_1+\cdots +d_n-n$ in $\mathcal{M}_G$ for generic games.
\end{theorem}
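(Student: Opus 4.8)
The plan is to reduce the statement to the torus $\mathbb{T}_G$ and then rerun the Bertini argument of Theorem~\ref{theo:dimension}, this time invoking the smoothness clause (third bullet) of Theorem~\ref{theo:Bertini}. Since every point of $\mathcal{M}_G$ lying off $\mathcal{W}$ has all coordinates $p_j\neq 0$ and hence lies in the dense torus of $\mathcal{M}_G$, and since $\phi_G$ restricts to a surjective homomorphism of tori onto that dense torus (hence a smooth, faithfully flat morphism), smoothness descends along $\phi_G$: the preimage $\widetilde{\mathcal{V}}_{X,G}=\phi_G^{-1}(\mathcal{V}_{X,G})$ is smooth on $\mathbb{T}_G$ if and only if $\mathcal{V}_{X,G}$ is smooth away from $\mathcal{W}$. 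As $\widetilde{\mathcal{V}}_{X,G}$ and $Y_{X,G}$ coincide away from the pullback of $\mathcal{W}$, it suffices to prove that $Y_{X,G}$ is smooth there.

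The first real step is a local reduction, needed because $Y_{X,G}$ is globally cut out by all the minors $F_{a,b}^{(k)}$, far more than the codimension $d_1+\cdots+d_n-n$, so no direct complete-intersection Bertini argument applies. Away from the pullback of $\{p_{+\cdots+}=0\}$, the factorization $\phi_G^{*}p_{+\cdots+}=\bigl(\sum_{a}L_a^{(k)}\bigr)\bigl(\sum_{j'}\mathfrak D_{j'}^{(k)}\bigr)$ forces $\sum_{a}L_a^{(k)}\neq 0$ for every $k$, so for each such point one may choose $\mathbf{a}=(a_k)_{k\in[n]}$ with $L_{a_k}^{(k)}\neq 0$ for all $k$. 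On the open chart $U_{\mathbf{a}}=\{L_{a_k}^{(k)}\neq 0 : k\in[n]\}$ the row indexed by $a_k$ of $\widetilde M^{(k)}$ is nonzero, so the condition $\operatorname{rank}\widetilde M^{(k)}\leq 1$ is equivalent to the vanishing of precisely the $d_k-1$ minors $F_{a_k,b}^{(k)}$ with $b\neq a_k$. Hence $Y_{X,G}=Y_{X,G,\mathbf{a}}$ on $U_{\mathbf{a}}$, and the finitely many charts $U_{\mathbf{a}}$ cover $\widetilde{\mathcal{V}}_{X,G}$ away from the pullback of $\mathcal{W}$.

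On each chart $U_{\mathbf{a}}$ the variety $Y_{X,G,\mathbf{a}}$ is obtained by intersecting the smooth variety $\mathbb{T}_G$ successively with generic divisors drawn from the linear systems $\Lambda_{a_k,b}^{(k)}(X_{a_k}^{(k)})$, one for each $k$ and each $b\neq a_k$. For a generic game these $d_1+\cdots+d_n-n$ divisors are simultaneously generic members of their systems, since the free slices $X_b^{(k)}$ are independent coordinates of $X$ and the maps $V_b^{(k)}\to\Lambda_{a_k,b}^{(k)}(X_{a_k}^{(k)})$ are dominant. Applying the third clause of Theorem~\ref{theo:Bertini} at each step yields that $Y_{X,G,\mathbf{a}}$ is smooth away from $B_{\mathbf{a}}$. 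By Lemma~\ref{lem:base locus} and the reduction in the proof of Theorem~\ref{theo:dimension}, every component of $B_{\mathbf{a}}$ lies either in some $\mathbb{V}(L_{a_k}^{(k)})$ or in a coordinate hyperplane $\mathbb{V}(\mathfrak S_{j,b}^{(k)}:j\in\mathscr C_k)$; the first kind is disjoint from $U_{\mathbf{a}}$ by construction, and the second lies in the pullback of $\mathcal{W}$. Thus $Y_{X,G}=Y_{X,G,\mathbf{a}}$ is smooth at every point of $U_{\mathbf{a}}$ off the pullback of $\mathcal{W}$, and gluing over the finitely many $\mathbf{a}$ gives smoothness of $\widetilde{\mathcal{V}}_{X,G}$, hence of $\mathcal{V}_{X,G}$, away from $\mathcal{W}$.

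For the final assertion, the totally mixed CI equilibria lie in the open simplex $\Delta_{d_1\cdots d_n-1}$, where $p_{+\cdots+}=1$ and all $p_j>0$, so they are real points of the smooth locus of $\mathcal{V}_{X,G}$; since the defining equations are real, the implicit function theorem shows this real locus is either empty or a smooth manifold whose dimension equals the complex dimension of $\mathcal{V}_{X,G}$, i.e.\ of codimension $d_1+\cdots+d_n-n$ in $\mathcal{M}_G$ by Theorem~\ref{theo:dimension}. The main obstacle I anticipate is the global-to-local passage: because $Y_{X,G}$ is defined by all the minors rather than by a minimal regular sequence, the smoothness form of Bertini only becomes applicable after restricting to the charts $U_{\mathbf{a}}$ and verifying that on each chart the relevant $d_1+\cdots+d_n-n$ minors genuinely cut out $Y_{X,G}$, and one must keep the genericity of $X$ compatible across all finitely many charts so that a single generic game works simultaneously for every $\mathbf{a}$.
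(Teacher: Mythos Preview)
Your proof is correct and follows essentially the same strategy as the paper's: pull back to $\mathbb{T}_G$, apply the smoothness clause of Bertini's theorem to each $Y_{X,G,\mathbf{a}}$, and use the base-locus description of Lemma~\ref{lem:base locus} together with the argument of Theorem~\ref{theo:dimension} to confine the possible singularities to the pullback of $\mathcal W$. The paper organizes the last step by passing to the global intersection $B=\bigcap_{\mathbf a}B_{\mathbf a}$ and invoking the containment $B\subseteq\phi_G^{-1}(\mathcal W)$ already established there, whereas you phrase it via the local charts $U_{\mathbf a}$ on which $Y_{X,G}=Y_{X,G,\mathbf a}$ and observe directly that $B_{\mathbf a}\cap U_{\mathbf a}\subseteq\phi_G^{-1}(\mathcal W)$; these are equivalent packagings of the same idea.
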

\begin{proof}
    The proof follows exactly the same idea as in \cite[Theorem 12]{portakal2024game}. Let $\mathcal{V}_{X,G}$ be the Spohn CI variety of a generic game $X$, and let $\widetilde{\mathcal{V}}_{X,G}$ be the preimage of $\mathcal{V}_{X,G}$ through the monomial map \eqref{eq:mon map}. As in the proof of Theorem \ref{theo:dimension}, consider the varieties $Y_{X,G}$ and $Y_{X,G,\mathbf{a}}$ for $\mathbf{a}\in\mathcal{R}_{[n]}$. By Bertini's Theorem,  $Y_{X,G,\mathbf{a}}$ is smooth away from the base locus $B_{\mathbf{a}}$ \eqref{eq: B a}. Moreover, away from this base locus, $Y_{X,G}$ is formed by irreducible components of $Y_{X,G,\mathbf{a}}$. Therefore, $Y_{X,G}$ is smooth away from $B_{\mathbf{a}}$. By \eqref{eq:intersection}, we deduce that   $Y_{X,G}$ is smooth away from $B$ defined in \eqref{eq:big intersection}.
     As shown in the proof of Theorem \ref{theo:dimension}, $B$ is contained in the pullbacks of hyperplanes of the form $\{p_{j_1\ldots j_n}=0\} $ and $\{p_{+\cdots+}=0\}$. Therefore, $Y_{X,G}$ is smooth away from the pullbacks of hyperplanes of the form $\{p_{j_1\ldots j_n}=0\} $ and $\{p_{+\cdots+}=0\}$. Similarly, $Y_{X,G}$ and $\widetilde{\mathcal{V}}_{X,G}$ coincide away from the pullbacks of these hyperplanes. We deduce that 
    $\widetilde{\mathcal{V}}_{X,G}$  is also smooth away from these hyperplanes. Using \cite[Tag 02KM]{stacks-project}, we conclude that the Spohn CI variety is smooth away from the hyperplanes of the form $\{p_{j_1\ldots j_n}=0\} $ and $\{p_{+\cdots+}=0\}$.

    Now, one notices that the hyperplanes of the form $\{p_{j_1\ldots j_n}=0\} $ and $\{p_{+\cdots+}=0\}$ intersect the probability simplex in the boundary. Therefore, the set of totally mixed CI equilibria is either empty or it lies in the smooth locus of $\mathcal{V}_{X,G}$. In the second case, by \cite[Theorem 2.2.9]{mangolte2020real}, the dimension of the set of totally mixed CI equilibria  is the one given in Theorem \ref{theo:dimension}.
\end{proof}

\subsection{Saturation and Spohn CI varieties}\label{sec:saturation}

 One of the main difficulties of dealing with Spohn CI varieties lies in their construction: $\mathcal{V}_{X,G}$ is obtained by removing from $\mathcal{M}_G\cap\mathcal{V}_X$ the irreducible components lying in hyperplanes of the form  $\{p_{j_1\ldots j_n}=0\} $ and $\{p_{+\cdots+}=0\}$. In terms of ideals, this process of erasing components is achieved via saturations. 
 A difficulty arising from this saturation description of Spohn CI varieties is the lack of an algebraic family of Spohn CI varieties. More precisely, if we fix a graph $G$, then the union
 \begin{equation}\label{eq:big union}
\bigcup_{\text{Games } X} \mathcal{V}_{X,G}
 \end{equation}
is not an algebraic variety but a constructible set. In order to build an algebraic family of Spohn CI varieties, one would need to take the Zariski closure of \eqref{eq:big union}. This subtle distinction creates some technical difficulties while dealing with these varieties as seen in \cite[Proposition 25]{portakal2024game}.

 However, the main disadvantage of this saturation process is the difficulty of finding a general formula for the defining equations of Spohn CI varieties. For instance, in Section \ref{sec:dim}, we introduced the variety $Y_{X,G}$, which is close to be a good description of the Spohn CI variety. 
 In \cite[Proposition 25]{portakal2024game}, it is shown that these two varieties are indeed equal for Nash CI Varieties and generic binary games.
 However, Example \ref{ex: wrong saturaion} shows that in some cases further saturation needs to be done in $Y_{X,G}$. This lack of equations leads to obstacles while calculating invariants and properties of these varieties, such as their degree, smoothness, irreducibility, etc. From a computational perspective, in order to analyze these properties for concrete examples, one has to first deal with saturation process, which becomes computationally heavy quite rapidly. This computational challenge leads to the following questions: do we need to saturate by all the hyperplanes of the form $\{p_{j_1\ldots j_n}=0\} $ and $\{p_{+\cdots+}=0\}$?  If not, what is the minimum set of hyperplanes we need in the process of saturation? Is there a better set of hyperplanes? We present a partial answer to these questions.

\begin{theorem}\label{theo:saturation}
Let $G$ be a graph with $n$ vertices. Then, for a generic games $X$, the Spohn CI variety $\mathcal{V}_{X,G}$ is obtained by removing from $\mathcal{M}_G\cap\mathcal{V}_X$  the irreducible components lying in hyperplanes of the form $\{p_{+\cdots+a+\cdots +}=0\}$.
\end{theorem}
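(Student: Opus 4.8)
The plan is to show that the two saturations discard exactly the same irreducible components of $\mathcal{M}_G\cap\mathcal{V}_X$. Write $\mathcal{H}_{\mathrm{orig}}=\mathcal{W}$ for the original hyperplane family and $\mathcal{H}_{\mathrm{new}}=\bigcup_{k\in[n],\,a\in[d_k]}\{p_{+\cdots+a+\cdots+}=0\}$ for the proposed one, and let $V_{\mathrm{new}}$ denote the variety obtained by removing from $\mathcal{M}_G\cap\mathcal{V}_X$ the components lying in $\mathcal{H}_{\mathrm{new}}$. Since both $\mathcal{V}_{X,G}$ and $V_{\mathrm{new}}$ are unions of irreducible components of $\mathcal{M}_G\cap\mathcal{V}_X$, it suffices to prove that, for a generic game $X$ and each such component $Z$, one has $Z\subseteq\mathcal{H}_{\mathrm{orig}}$ if and only if $Z\subseteq\mathcal{H}_{\mathrm{new}}$. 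I would run the whole comparison through the monomial parametrization $\phi_G$ of Subsection~\ref{subsec:base locus}, exploiting the factorizations $\phi_G^{*}p_{+\cdots+a+\cdots+}=L_a^{(k)}\bigl(\sum_{j'\in\mathscr{D}_k}\mathfrak{D}_{j'}^{(k)}\bigr)$ and $\phi_G^{*}p_{+\cdots+}=\bigl(\sum_{a\in[d_k]}L_a^{(k)}\bigr)\bigl(\sum_{j'\in\mathscr{D}_k}\mathfrak{D}_{j'}^{(k)}\bigr)$ already used in Theorem~\ref{theo:dimension}, together with the relations $p_{+\cdots+a+\cdots+}(p\cdot_k X^{(k)})_b=p_{+\cdots+b+\cdots+}(p\cdot_k X^{(k)})_a$ coming from the vanishing of the $2\times 2$ minors of $M^{(k)}$ on $\mathcal{V}_X$.

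For the direction $\mathcal{H}_{\mathrm{new}}\Rightarrow\mathcal{H}_{\mathrm{orig}}$ (i.e.\ $\mathcal{V}_{X,G}\subseteq V_{\mathrm{new}}$), suppose $Z\subseteq\{p_{+\cdots+a+\cdots+}=0\}$ for some player $k$. The minor relations then force $p_{+\cdots+b+\cdots+}(p\cdot_k X^{(k)})_a=0$ on $Z$ for every $b\neq a$. If all marginals $p_{+\cdots+b+\cdots+}$ vanish on $Z$, then $p_{+\cdots+}=0$ on $Z$ and $Z\subseteq\mathcal{H}_{\mathrm{orig}}$; the remaining case is $(p\cdot_k X^{(k)})_a\equiv 0$ on $Z$, which is the delicate one. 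The opposite direction $\mathcal{H}_{\mathrm{orig}}\Rightarrow\mathcal{H}_{\mathrm{new}}$ splits into components lying in a coordinate hyperplane $\{p_{j_1\cdots j_n}=0\}$ and components lying in $\{p_{+\cdots+}=0\}$. Here the guiding structural observation is that the three pieces of the base locus in Lemma~\ref{lem:base locus} all sit inside a single marginal pullback: $\mathbb{V}(L_a^{(k)},\mathcal{G}_a^{(k)})$ and $\mathbb{V}(L_a^{(k)},L_b^{(k)})$ lie in $\mathbb{V}(L_a^{(k)})$, while $\mathbb{V}(\mathfrak{S}_{j,b}^{(k)}:j\in\mathscr{C}_k)\subseteq\mathbb{V}(L_b^{(k)})$ because $L_b^{(k)}=\sum_{j}\mathfrak{S}_{j,b}^{(k)}$. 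This identifies the marginal hyperplanes as precisely the family capturing the boundary behaviour produced by the saturation, and it is what makes the replacement plausible.

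The hard part, and the single genericity input on which both directions rest, is to rule out \emph{exotic} components for generic $X$: namely, that $\mathcal{M}_G\cap\mathcal{V}_X$ has no irreducible component contained in a marginal hyperplane $\{p_{+\cdots+a+\cdots+}=0\}$ yet avoiding all coordinate hyperplanes and $\{p_{+\cdots+}=0\}$ (this settles the residual case $(p\cdot_k X^{(k)})_a\equiv 0$ above), and dually no component trapped in $\{p_{+\cdots+}=0\}$ or in a coordinate hyperplane while avoiding every marginal hyperplane. I expect to establish these by pulling back to $\mathbb{P}_G$ and showing, via a dimension count parallel to the base-locus analysis in the proof of Theorem~\ref{theo:dimension}, that the relevant intersections $\mathbb{V}(L_a^{(k)},\mathcal{G}_a^{(k)})\cap Y_{X,G}$ (and their analogues) have codimension strictly larger than $d_1+\cdots+d_n-n$ for generic $X$, so they cannot contribute a genuine component outside $\mathcal{H}_{\mathrm{orig}}$. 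Controlling these loci uniformly over all graphs $G$ — where the polynomials $L_a^{(k)}$ and $\mathcal{G}_a^{(k)}$ couple the clique parameters in ways sensitive to the shape of the connected component $G_k$ — is where the main technical effort will lie.
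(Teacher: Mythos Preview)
Your overall strategy is the paper's: pull everything back through $\phi_G$, work with $Y_{X,G}$, and control bad components via the base-locus description of Lemma~\ref{lem:base locus}. The structural observation you make---that every piece of $B_{a,b}^{(k)}(X_a^{(k)})$ lies in $\mathbb{V}(L_a^{(k)})\cup\mathbb{V}(L_b^{(k)})$---is exactly what the paper uses for the direction $\mathcal{H}_{\mathrm{orig}}\Rightarrow\mathcal{H}_{\mathrm{new}}$.

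Where you diverge is in the direction $\mathcal{H}_{\mathrm{new}}\Rightarrow\mathcal{H}_{\mathrm{orig}}$. Your opening move via the minor relations (either all marginals vanish, or $(p\cdot_k X^{(k)})_a\equiv 0$ on $Z$) is correct but leads you into the ``delicate case,'' which you then propose to handle by bounding the codimension of $\mathbb{V}(L_a^{(k)},\mathcal{G}_a^{(k)})\cap Y_{X,G}$. This is both harder than necessary and slightly mis-aimed: the pullback of $(p\cdot_k X^{(k)})_a$ is not $\mathcal{G}_a^{(k)}$ (the latter involves only the variables of $G_k$, while the former carries the $\mathfrak{D}^{(k)}$-factor), so the locus you'd actually need to control is different. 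The paper sidesteps this entirely by treating the two directions \emph{symmetrically}: for a hyperplane $H$ from \emph{either} family and any component $Z\subseteq H$ of $Y_{X,G}$, rerun the Bertini argument of Theorem~\ref{theo:dimension} inside $H$ to conclude that $H\cap Y_{X,G}$ has codimension $d_1+\cdots+d_n-n+1$ away from $H\cap B$, forcing $Z\subseteq B$. Once $Z\subseteq B$, both implications are immediate from what is already established: $B\subseteq\phi_G^{-1}(\mathcal{H}_{\mathrm{orig}})$ is the content of the proof of Theorem~\ref{theo:dimension}, and $B_{a_k,b}^{(k)}\subseteq\mathbb{V}(L_{a_k}^{(k)})\cup\mathbb{V}(L_b^{(k)})$ (hence $B\subseteq\phi_G^{-1}(\mathcal{H}_{\mathrm{new}})$) is your own observation. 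So drop the minor-relation preamble and the specific intersection $\mathbb{V}(L_a^{(k)},\mathcal{G}_a^{(k)})\cap Y_{X,G}$; the single Bertini-in-$H$ step does all the work uniformly.
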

\begin{proof}
    We need to show that for a generic game $X$,  an irreducible component of $\mathcal{M}_G\cap\mathcal{V}_X$ is contained in a hyperplane of the form $\{p_{j_1\ldots j_n}=0\}$ or $\{p_{+\cdots+}=0\}$  if and only if it is contained in a hyperplane of the form $\{p_{+\cdots+a+\cdots+}=0\}$. This statement can be checked on the pullback via the monomial map $\phi_G$. Recall that $Y_{X,G}$ is obtained by removing from $\phi_G^{-1}(\mathcal{M}_G\cap\mathcal{V}_X)$ some irreducible components contained in hyperplanes of the form $ \{p_{j_1\ldots j_n}=0\}$ and $ \{p_{+\cdots+}=0\}$. For instance, some of these irreducible components arise from the second factor of \eqref{eq:extra factor}. Such factor also divides the pullback of $p_{+\cdots+a+\cdots+}$. Therefore, these removed irreducible components are also contained in hyperplanes of the form $\{p_{+\cdots+a+\cdots+}=0\}$. We deduce that it is enough to check the statement for $Y_{X,G}$. Assume that $Z$ is an irreducible component of $Y_{X,G}$ contained in the pullback $H$ of a hyperplane of the form $\{p_{j_1\ldots j_n}=0\} $ or$\{p_{+\cdots+}=0\}$. As in Theorem \ref{theo:dimension}, $H\cap Y_{X,G}$ has codimension $d_1+\cdots+d_n-n+1$ away from $H\cap B$. Since the codimension of $Z$ is at most $d_1+\cdots+d_n-n$, we deduce that $Z$ is contained in $H\cap B$. By the construction of $B$ in \eqref{eq:big intersection}, for every $\mathbf{a}\in\mathcal{R}_{[n]}$ there exists $k\in [n]$ and $b_k\in[d_k]\setminus\{a_k\}$ such that $Z$ is contained in $B_{a_k,b_k}^{(k)}(X_{a_k}^{(k)})$. By Lemma \ref{lem:base locus}, the base locus  $B_{a_k,b_k}^{(k)}(X_{a_k}^{(k)})$ is contained in $\mathbb{V}(L_{a_k}^{(k)})\cup\mathbb{V}(L_{b_k}^{(k)})$. Therefore, $Z$ is contained in the pullback of a hyperplane of the form $\{p_{+\cdots+a+\cdots+}=0\}$.

Now,  assume that $Z$ is an irreducible component of $Y_{X,G}$ contained in the pullback $H$ of a hyperplane of the form $\{p_{+\cdots+a+\cdots +}=0\} $. As before, by Bertini's Theorem, we deduce that $Z$ is contained in $H\cap B$. As in the proof of Theorem \ref{theo:dimension}, we deduce that $Z$ is contained in the pullback of a  hyperplane of the form $ \{p_{j_1\ldots j_n}=0\}$ or $ \{p_{+\cdots+}=0\}$. Therefore, for generic $X$, the saturation of $Y_{X,G}$ by the hyperplanes of the form $ \{p_{j_1\ldots j_n}=0\}$ or $ \{p_{+\cdots+}=0\}$ equals the saturation by the hyperplanes of the form $\{p_{+\cdots+a+\cdots+}=0\}$.
\end{proof}

\section{Equations and degree of generic Nash CI varieties}\label{sec:nash ci}

In this section, we focus on the geometric properties of generic Nash CI varieties that were introduced in Section \ref{sec: framework and examples}. Let $X$ be an $n$--player game. During this section and Section~\ref{sec:Emptyness}, we fix $G$ to be a cluster graph with $n$ vertices, i.e., $G=G_1\sqcup\cdots\sqcup G_k$ is the disjoint union of $k$ complete graphs. Let $n_i$ be the number of vertices of $G_i$. 
We label the players of the game and the vertices of $G$ by $(1,1),\ldots,(1,n_1),\ldots,(k,1),\ldots,(k,n_k)$, where, $(i,1),\ldots,(i,n_i)$ are the vertices of $G_i$. Similarly, the number of pure strategies of the player $(i,j)$ is $d_{i,j}$. Using this notation we introduce the numbers
\[
D_i = \prod_{j=1}^{n_i}d_{i,j} \text{ and } S_i=\sum_{j=1}^{n_i}d_{i,j}.
\]
For this type of graphs, the graphical model associated to $G$, denoted by $\Mg$, coincides with the Segre variety
\begin{equation}
    \label{eq:Nash and Segree}
    \Mg=\PP^{D_1-1}\times\cdots\times\PP^{D_k-1}.
\end{equation}
Let $\sigma^{(i)}_j$ for $i\in[k]$ and $j\in\prod_{j=1}^{n_i}[d_{i,j}]$ be the coordinates of the $i$--th factor of $ \Mg$.
In this setting, the Spohn CI variety is called the \emph{Nash CI variety} of the game. Similarly, for a cluster graph, the Spohn CI equilibrium is called \emph{Nash CI equilibrium}.

\begin{remark}
    From a game-theoretic perspective, the Nash CI equilibria model the situation where the players are divided into $k$ groups of $n_1,\ldots,n_k$ players respectively. Each group corresponds to a connected component of the graph. Players in each group behave collectively, but there is no communication between distinct groups. From a geometric perspective, such division into groups is described through the Segre parametrization 
    \[
    p_{j_{1,1}\cdots j_{k,n_k}}=\sigma^{(1)}_{j_{11}\cdots j_{1,n_1}}\cdots \sigma^{(k)}_{j_{k1}\cdots j_{k,n_k}}
    \]
    of the Segre variety $\mathcal{M}_G$. Recall that $\sigma^{(i)}_j$ are the coordinates of the $i$--th factor of $\mathcal{M}_G$. The Segre parametrization splits the mixed strategy $p$ into a mixed strategy for each factor of the Segre variety (each connected component of $G$). Therefore, the action of deciding collectively a mixed strategy as in the dependency setting is now divided into each group deciding independently their mixed strategy, and then joining them together to create the mixed strategy of the game.
\end{remark}

In the remaining sections of the paper, we will focus on the study of the algebro-geometric properties of generic Nash CI varieties.
In the binary case, these varieties where introduced and studied in \cite[Section 5]{portakal2024game} where their dimension and degree was determined. In this section, we carry out such computation in the nonbinary case.
From Theorem \ref{theo:dimension}, we derive the dimension of Nash CI varieties for generic games:

\begin{corollary}\label{cor:dim Nash}
    Assume that for generic games $X$, the Nash CI variety is nonempty. Then, the dimension of the Nash CI variety is 
    \[
    D_1+\cdots+D_k-S_1-\cdots-S_k+n-k.
    \]
\end{corollary}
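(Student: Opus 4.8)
The plan is to derive this as an immediate specialization of the general dimension formula in Theorem~\ref{theo:dimension}, combined with the elementary computation of the dimension of the ambient variety. Since $G$ is a cluster graph, the parametrized graphical model is the Segre variety $\Mg=\PP^{D_1-1}\times\cdots\times\PP^{D_k-1}$ by \eqref{eq:Nash and Segree}. First I would record that its dimension is the sum of the dimensions of its factors:
\[
\dim\Mg=\sum_{i=1}^k (D_i-1)=D_1+\cdots+D_k-k.
\]

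The second ingredient is to rewrite the codimension $d_1+\cdots+d_n-n$ appearing in Theorem~\ref{theo:dimension} in the clique-indexed notation fixed for this section. Under the relabeling of the players as pairs $(i,j)$ with $i\in[k]$ and $j\in[n_i]$, the sum of all strategy counts decomposes clique by clique as $\sum_{i=1}^k\sum_{j=1}^{n_i}d_{i,j}=S_1+\cdots+S_k$, while the total number of players is $n=n_1+\cdots+n_k$. Hence the quantity $d_1+\cdots+d_n-n$ from the theorem equals $S_1+\cdots+S_k-n$.

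Finally, invoking the standing nonemptiness hypothesis so that the second alternative of Theorem~\ref{theo:dimension} applies, I would conclude that $\mathrm{codim}_{\Mg}\Vc=S_1+\cdots+S_k-n$ for generic $X$, and subtract this from $\dim\Mg$ to obtain
\[
\dim\Vc=(D_1+\cdots+D_k-k)-(S_1+\cdots+S_k-n)=D_1+\cdots+D_k-S_1-\cdots-S_k+n-k,
\]
as claimed. I do not expect a genuine obstacle here: all the substantive geometry is already carried out in Theorem~\ref{theo:dimension}, and the only point demanding care is the bookkeeping that matches the flat player-indexing $1,\ldots,n$ used there against the clique-indexed notation $(i,j)$, ensuring both that $\sum_{i,j}d_{i,j}=S_1+\cdots+S_k$ and that the Segre dimension of $\Mg$ is computed correctly.
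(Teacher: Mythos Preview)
Your proposal is correct and follows exactly the same approach as the paper, which simply states that the corollary is derived from Theorem~\ref{theo:dimension} without giving further details. You have merely made explicit the bookkeeping that the paper leaves implicit: computing $\dim\Mg=\sum_i(D_i-1)$ for the Segre variety and rewriting the codimension $d_1+\cdots+d_n-n$ as $S_1+\cdots+S_k-n$ in the clique-indexed notation.
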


In order to apply the dimension formula given in Corollary \ref{cor:dim Nash}, we need to understand when a generic Nash CI variety is nonempty. The strategy to analyze the emptiness of Nash CI varieties is to use the Chow ring of the graphical model $\Mg$. In Subsection \ref{sec:Nash and Chow} we compute the class of a generic Nash CI variety in the Chow ring of $\Mg$ for generic games. As a consequence of this computation, we provide a formula for the degree of Nash CI varieties. In Section \ref{sec:Emptyness}, we will give sufficient and necessary conditions of a generic Nash CI variety for being nonempty.

\subsection{Defining equations}\label{sec:equations}

In order to provide a formula for the class of  generic Nash CI varieties in the Chow ring of $\mathcal{M}_G$, we need to find the equations of generic Nash CI varieties.
To do so, we set some notations first. Fix the player $(a,b)$ where $a\in[k]$ and $b\in [n_a]$. For any $l\in[d_{a,b}]$, we consider the linear form $L_l^{(a,b)}$ in $\PP^{D_a-1}$ given by 
\[
\displaystyle L_l^{(a,b)}:= \sum_{j_1\cdots \hat{j_b}\cdots j_{n_a}} \sigma_{j_1\cdots l\cdots j_{n_a}}^{(a)}.
\]
The linear form $L_l^{(a,b)}$ corresponds to the polynomial \eqref{eq: poly L} for this concrete type of graphs.
Similarly, given a game $X$ and for $l\in[d_{a,b}]$, we consider the polynomial
\[
\mathcal G_{X,l}^{(a,b)}:=
\displaystyle \sum_{j_{11}\cdots\widehat{j_{ab}} \cdots j_{kn_k}}\!\!\!\!\! X^{(i,j)}_{j_{11}\cdots l\cdots j_{k,n_k}}\sigma^{(1)}_{j_{11}\cdots  j_{1n_1}}\cdots \sigma^{(a)}_{j_{a1}\cdots l\cdots j_{an_a}}\cdots \sigma^{(k)}_{j_{k1}\cdots  j_{kn_k}}
\]
if $(a,b)$ is not an isolated vertex, and
\[
\mathcal G_{X,l}^{(a,b)}:=
\displaystyle \sum_{j_{11}\cdots\widehat{j_{ab}} \cdots j_{kn_k}}\!\!\!\!\! X^{(i,j)}_{j_{11}\cdots l\cdots j_{k,n_k}}\sigma^{(1)}_{j_{11}\cdots  j_{1n_1}}\cdots \widehat{\sigma^{(a)}_{j_{a1}\cdots l\cdots j_{an_a}}}\cdots \sigma^{(k)}_{j_{k1}\cdots  j_{kn_k}}
\]
if $(a,b)$ is an isolated vertex.
As done in Section \ref{sec:dim}, evaluating in the matrices $M^{(a,b)}$ the parametrization of the Segre variety \eqref{eq:Nash and Segree} leads to rows and columns with a common factor of the form 
\[
\displaystyle\sum_{j_{11}\cdots j_{1n_1}}\cdots \widehat{\sum_{j_{a,1}\cdots j_{a,n_a}}}\cdots \sum_{j_{k,1}\cdots j_{k,n_k}}\sigma_{j_{11}\cdots j_{1n_1}}^{(1)}\cdots\widehat{\sigma_{j_{a1}\cdots j_{an_a}}^{(a)}} \cdots \sigma_{j_{k1}\cdots j_{kn_k}}^{(k)}.
\]
If $n_a=1$, i.e. if $(a,b)$ is an isolated vertex, the common factor $\sigma_j^{(a)}$ for $j\in[d_a]$ also appears. After removing these factors we get the $d_{(a,b)}\times 2$ matrix $\widetilde{M}^{(a,b)}$ defined as 
\[
\widetilde{M}^{(a,b)}=\begin{pmatrix}
    L_1^{(a,b)} & \mathcal G_{X,1}^{(a,b)}\\
\vdots &\vdots\\ 
    L_{d_{a,b}}^{(a,b)} & \mathcal G_{X,d_{a,b}}^{(a,b)}
\end{pmatrix}
\]
if $(a,b)$ is not an isolated vertex or 
\[
\widetilde{M}^{(a,b)}=\begin{pmatrix}
    1 && \mathcal G_{X,1}^{(a,b)}\\
\vdots &&\vdots\\ 
    1 && \mathcal G_{X,d_{a,b}}^{(a,b)}
\end{pmatrix}
\]
if $(i,l)$ is an isolated vertex.
 For a game $X$, we defined the variety $N_{X,G}$ in $\mathcal{M}_G$ as the zero locus of the $2\times2$ minors of the matrices $\widetilde{M}^{(1,1)},\ldots,\widetilde{M}^{(k,n_k)}$. Note that by construction, the Nash CI variety $\Vc$  is contained in $N_{X,G}$ since the common factors removed from the matrices $M^{(a,b)}$ are factors of the equations of hyperplanes of the form $\{p_{j_{11}\cdots j_{kn_k}}=0\}$ and $\{p_{+\cdots +}=0\}$. Next, we show that for generic games, $N_{X,G}$ is actually the Nash CI variety. To do so, we translate the base locus study carried out in Lemma \ref{lem:base locus} to the setting of Nash CI varieties. For a player $(a,b)$ and two strategies $r,s\in[d_{a,b}]$, we consider the polynomial $F^{(a,b)}_{r,s}$ given by the minor 
 \[ \begin{array}{ccccc}
F^{(a,b)}_{r,s}:=\left| 
\begin{array}{cc}
L^{(a,b)}_{r}& \mathcal G^{(a,b)}_{X,r} \\
L^{(a,b)}_{s}& \mathcal G^{(a,b)}_{X,s}
\end{array}
\right| 
&\text{ if }n_a>1& \text{ and } &
F^{(a,b)}_{r,s}:=\left| 
\begin{array}{cc}
1& \mathcal G^{(a,b)}_{X,r} \\
1& \mathcal G^{(a,b)}_{X,s}
\end{array}
\right| &\text{ if }n_a=1.
\end{array}
 \]

In other words, $F^{(a,b)}_{r,s}$ is the $2\times2$ minor of $\widetilde{M}^{(a,b)}$ corresponding to the rows $r$ and $s$. So the variety $N_{X,G}$ is the zero locus of all the polynomials $F^{(a,b)}_{r,s}$. Now, consider the map $\varphi^{(a,b)}_{r,s}$ that associates to the payoff tensor $X^{(a,b)}$ the polynomial
$F^{(a,b)}_{r,s}$. The image of this map is a linear series denoted by  $\Lambda_{r,s}^{(a,b)}$. If $(a,b)$ is not an isolated vertex, then $\Lambda_{r,s}^{(a,b)}$ is a linear subseries of $$|\mathcal{O}(1,\ldots,1,\underset{(a)}{2},1,\ldots,1)|.$$ On the other hand, if $(a,b)$ is an isolated vertex, the map $\varphi^{(a,b)}_{r,s}$  is surjective and $\Lambda_{r,s}^{(a,b)}$ is the complete linear series 
$$|\mathcal{O}(1,\ldots,1,\underset{(a)}{0},1,\ldots,1)|.$$ 
The description of the generators and the base locus of the linear system $\Lambda_{r,s}^{(a,b)}$ is similar as the one given in  \cite[Lemma 7, Lemma 8]{portakal2024game}. As in Section \ref{subsec:base locus}, we are interested in the linear subsystem $\Lambda_{r,s}^{(a,b)}(X^{(a,b)}_r)$ of $\Lambda_{r,s}^{(a,b)}$ given by fixing the slice payoff tensor $X_r^{(a,b)}$. The base locus of this linear system is described in Lemma \ref{lem:base locus}. Using the notation of Nash CI varieties, these generators are described as follows. Let $(a,b)$ be a non isolated vertex, let $r,s\in[d_{a,b}]$ and fix the $r$--th slice payoff tensor $X_r^{(a,b)}$. Then, the base locus $B_{r,s}^{(a,b)}(X_r^{(a,b)})$ of the linear series  $\Lambda_{r,s}^{(a,b)}(X_r^{(a,b)})$ is the union of the varieties
\begin{equation}\label{eq:base locus nash}
\begin{array}{cccc}
\mathbb{V}(\sigma^{(a)}_{j_{a1}\cdots s\cdots j_{an_a}} :\text{ for every }\, j_{a1}\cdots \widehat{j_{ab}}\cdots j_{an_a}),&\mathbb{V}(L_r^{(a,b)},\mathcal G_r^{(a,b)}),& \text{and}&\mathbb{V}(L_r^{(a,b)},L_s^{(a,b)}).
\end{array}
\end{equation}

Using this base locus, we derive the following result.

\begin{proposition}\label{prop:eq Nash CI}
    For generic games $X$, the variety  $N_{X,G}$ equals the Nash CI variety $\Vc$.
\end{proposition}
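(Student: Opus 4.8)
The plan is to prove the two inclusions separately, the nontrivial one being $N_{X,G}\subseteq\Vc$. The containment $\Vc\subseteq N_{X,G}$ is already in hand: the factors removed from the matrices $M^{(a,b)}$ to form $\widetilde M^{(a,b)}$ divide the equations of hyperplanes of the form $\{p_{j_{11}\cdots j_{kn_k}}=0\}$ and $\{p_{+\cdots+}=0\}$, so $\Mg\cap\mathcal V_X$ and $N_{X,G}$ agree away from $\mathcal W$ and hence $\Vc=\overline{N_{X,G}\setminus\mathcal W}^{Zar}$. Therefore the proposition is equivalent to showing that, for generic $X$, no irreducible component of $N_{X,G}$ lies in the pullback of a hyperplane of $\mathcal W$. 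By Theorem~\ref{theo:saturation} it suffices to exclude components contained in a coordinate hyperplane $\{\sigma^{(a)}_j=0\}$ or a marginal hyperplane $\{L^{(a,b)}_l=0\}$.

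The structural input special to cluster graphs is the base locus description \eqref{eq:base locus nash}: for every player $(a,b)$ and all rows $r,s$, \emph{each} of the three pieces of $B^{(a,b)}_{r,s}(X^{(a,b)}_r)$ lies in a coordinate or marginal hyperplane. The piece $\mathbb V(\sigma^{(a)}_{j_{a1}\cdots s\cdots j_{an_a}}:\ldots)$ lies in a coordinate hyperplane, while $\mathbb V(L^{(a,b)}_r,\mathcal G^{(a,b)}_r)$ and $\mathbb V(L^{(a,b)}_r,L^{(a,b)}_s)$ lie in $\{L^{(a,b)}_r=0\}$. This is precisely what fails for the path graph of Example~\ref{ex: wrong saturaion}, where the analogue of the first piece is $\mathbb V(\mathfrak S^{(k)}_{j,b}:j)$ with $\mathfrak S^{(k)}_{j,b}$ a product over two maximal cliques, and thus is not contained in any single coordinate hyperplane. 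Consequently every base locus $B_{\mathbf r}$ of \eqref{eq: B a} (with $\mathbf r=(r_{a,b})$ a profile of chosen rows, and $N_{X,G,\mathbf r}=\mathbb V(F^{(a,b)}_{r_{a,b},s}:(a,b),\,s\neq r_{a,b})$ the analogue of $Y_{X,G,\mathbf a}$) is contained in the union of coordinate and marginal hyperplanes.

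Let $U\subseteq\Mg$ be the open set on which every coordinate $\sigma^{(a)}_j$ and every linear form $L^{(a,b)}_l$ is nonzero; by the previous paragraph every $B_{\mathbf r}$ is disjoint from $U$. On $U$ the first column of each $\widetilde M^{(a,b)}$ has no vanishing entry, so the equations $F^{(a,b)}_{r_{a,b},s}$ for a single fixed row already force $\widetilde M^{(a,b)}$ to have rank one; hence all varieties $N_{X,G,\mathbf r}$ coincide with $N_{X,G}$ on $U$. Applying Bertini's Theorem~\ref{theo:Bertini} iteratively to the $c:=S_1+\cdots+S_k-n$ linear systems cutting out one fixed $N_{X,G,\mathbf r}$, and using that the relevant base loci avoid $U$, I conclude that $N_{X,G}\cap U$ is smooth and of pure codimension $c$ in $\Mg$, recovering the dimension of Theorem~\ref{theo:dimension} away from $\mathcal W$.

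It remains to exclude components of $N_{X,G}$ in $\Mg\setminus U$. Since each $N_{X,G,\mathbf r}$ is cut out by exactly $c$ equations in the smooth variety $\Mg$, I would first argue (by bounding the dimension of its intersection with each piece of $B_{\mathbf r}$ via genericity of the slice tensors) that $N_{X,G,\mathbf r}$ is a genuine complete intersection, hence pure of codimension $c$; intersecting over all profiles $\mathbf r$ then transports purity to $N_{X,G}$. In parallel, for each coordinate or marginal hyperplane $H$ one restricts the defining linear systems to $H$ and reapplies Bertini: away from $H\cap B$ this gives $\operatorname{codim}_{\Mg}(N_{X,G}\cap H)\ge c+1$ exactly as in Theorem~\ref{theo:dimension}, and combining this bound with purity forces any component inside $H$ to have codimension strictly larger than $c$, a contradiction; thus $N_{X,G}=\overline{N_{X,G}\cap U}=\Vc$. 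I expect the main obstacle to be the step hidden inside the global base locus $B=\bigcap_{\mathbf r}B_{\mathbf r}$ of \eqref{eq:big intersection}: one must show that ranging over all row-profiles $\mathbf r$ genuinely cuts the dimension of these bad pieces below $\dim\Mg-c$, so that no stray low-dimensional component of $N_{X,G}$ survives in the coordinate and marginal hyperplanes. This is exactly where the cluster-graph hypothesis, through the base-locus description \eqref{eq:base locus nash}, is indispensable, as Example~\ref{ex: wrong saturaion} shows.
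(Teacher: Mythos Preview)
Your high-level architecture is sound and matches the paper's: reduce to showing that $N_{X,G}$ has no irreducible component in a coordinate or marginal hyperplane, and exploit the cluster-graph feature that every piece of the base locus \eqref{eq:base locus nash} already sits in such a hyperplane. Two points deserve attention.

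\textbf{The ``transports purity'' step.} As written this is not valid: intersecting pure varieties does not give a pure variety. What makes it work here is the determinantal codimension bound (e.g.\ \cite[Proposition~12.2]{harris2013algebraic}, used by the paper as \eqref{eq: codim induction}): every component of $N_{X,G}$ has codimension at most $c$. Combined with the containment $N_{X,G}\subseteq N_{X,G,\mathbf r}$ (pure of codimension $c$), this forces every component of $N_{X,G}$ to have codimension exactly $c$. You should state this bound explicitly rather than invoke ``purity transport''.

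\textbf{The real gap.} Everything hinges on your parenthetical claim that each $N_{X,G,\mathbf r}$ is a genuine complete intersection, to be established ``by bounding the dimension of its intersection with each piece of $B_{\mathbf r}$ via genericity of the slice tensors''. Genericity alone is not enough for the piece $\mathbb V(L^{(a,b)}_r,L^{(a,b)}_s)$: the forms $L^{(a,b)}_l$ are \emph{fixed} linear forms on the $a$-th factor, independent of $X$, so there is nothing generic to move. The paper confronts this head-on with a different induction---not on row-profiles $\mathbf r$ but on an integer vector $\overrightarrow m$ that adds one row at a time---and, at the inductive step, splits the relevant base locus into three pieces $B_1,B_2,B_3$. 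For $B_1$ (the coordinate piece) the induction on the number of strategies suffices; for $B_3$ genericity of the $\mathcal G_r^{(a,b)}$ suffices, as you suggest; but for $B_2=\mathbb V(L^{(a,b)}_1,\ldots,L^{(a,b)}_{m_{a,b}})$ the paper needs the fibration $\mathcal V_X\to(\PP^1)^n$ of generalized columns from \cite[Theorem~6]{portakal2022geometry} to show that these fixed linear forms cut $N_{X,G,\overrightarrow m-m_{a,b}\overrightarrow e_{a,b}}$ in the expected codimension. This is precisely the ``main obstacle'' you flag at the end, and your sketch does not yet supply a mechanism to resolve it.
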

\begin{proof}
If $N_{X,G}$  is empty, then  $\Vc$ is also empty. Assume now that $N_{X,G}$   is nonempty.
Note that by construction, the Nash CI variety $\Vc$ is obtained by removing from the variety  $N_{X,G}$ all the irreducible components contained in a hyperplane of the form  $\{p_{j_{11}\cdots j_{kn_k}}=0\}$ or $\{p_{+\cdots +}=0\}$. In order to show that  $N_{X,G}=\Vc$
for generic games, it is enough to check that $N_{X,G}$ has no irreducible component contained in a hyperplane  the form  $\{p_{j_{11}\cdots j_{kn_k}}=0\}$ or $\{p_{+\cdots +}=0\}$. 
 Consider the integer vector 
  $$\overrightarrow{m}=\left(\ldots,m_{a,b},\ldots \right)_{a\in[k],\,b\in[n_a]}$$
  whose entries are indexed by the players of the game and $2\leq m_{a,b}\leq d_{a,b}$ for each player $(a,b)$. For a player $(a,b)$, let $M^{(a,b)}_{m_{a,b}}$ be the submatrix of $\widetilde{M}^{(a,b)}$ given by the first $m_{a,b}$ rows. 
  For a game $X$, we consider the variety $N_{X,G,\overrightarrow{m}}$ defined by the $2\times 2$ minors of the matrices $M^{(a,b)}_{m_{a,b}}$ for every player $(a,b)$. Note that for $\overrightarrow{m}=(d_{1,1},\ldots,d_{k,n_k})$, we get that $N_{X,G,\overrightarrow{m}}=N_{X,G}$. We say that $\overrightarrow{m}\leq \overrightarrow{m}' $ if $m_{a,b}\leq m_{a,b}'$ for all players $(a,b)$. Then, for $\overrightarrow{m}\leq \overrightarrow{m}'$, we get that $N_{X,G,\overrightarrow{m}'}\subseteq N_{X,G,\overrightarrow{m}}$. We claim that for any integer vector 
 $$\overrightarrow{m}=\left(\ldots,m_{a,b},\ldots \right)_{a\in[k],\,b\in[n_a]}$$
with $2\leq m_{a,b}\leq d_{a,b}$ and for a generic game $X$, the variety $N_{X,G,\overrightarrow{m}}$ has codimension $|\overrightarrow{m}|-n$ in $\mathcal{M}_G$ and it has no irreducible component contained in a hyperplane of the form $\{p_{j_{11}\cdots j_{kn_k}}=0\}$ or $\{p_{+\cdots +}=0\}$. 
Here $|\overrightarrow{m}|=\sum m_{a,b}$. 
We argue by induction on the integer vector $\overrightarrow{m}$ and the number of strategies of each player.
The first case happens when $\overrightarrow{m}=(2,\ldots,2)$. In this case, the matrices $M^{(a,b)}_{m_{a,b}}$ are squared and the proof follows exactly as in the binary game case in \cite[Theorem 9]{portakal2024game} and \cite[Proposition 25]{portakal2024game}.

Now, fix an integer vector $\overrightarrow{m}>(2,\ldots,2)$ and assume that the claim holds for every integer vector lower than  $\overrightarrow{m}$. Then, there exists a player $(a,b)$ such that $m_{a,b}>2$. Consider the integer vector $\overrightarrow{m}':=\overrightarrow{m}-e_{(a,b)}$. First, we show that $N_{X,G,\overrightarrow{m}}$ has codimension $|\overrightarrow{m}|-n$ in $\mathcal{M}_G$. Using \cite[Proposition 12.2]{harris2013algebraic}, we get that
\begin{equation}\label{eq: codim induction}
\mathrm{codim}_{\mathcal{M}_G}\,N_{X,G,\overrightarrow{m}}\leq |\overrightarrow{m}|-n.
\end{equation}
 By construction we have that 
\begin{equation}\label{eq:inclusion induction}
N_{X,G,\overrightarrow{m}} = N_{X,G,\overrightarrow{m}'}\cap \mathbb{V}\left(F^{(a,b)}_{r,m_{a,b}}:\text{ for } 1\leq r< m_{a,b}\right).
\end{equation}
In particular, we get that 
\[
N_{X,G,\overrightarrow{m}} \subseteq  N_{X,G,\overrightarrow{m}'}\cap \mathbb{V}(F_{r,m_{a,b}})
\]
for $1\leq r<m_{a,b}$.
By Bertini's Theorem, for generic games $X$ we have that $N_{X,G,\overrightarrow{m}}$ has the expected codimension $|\overrightarrow{m}|-n$  away from the base locus $B_{r,m_{a,b}}^{(a,b)}(X_{r}^{(a,b)})$ of  $\Lambda_{r,m_{a,b}}^{(a,b)}(X_{r}^{(a,b)})$ for $1\leq r<m_{a,b}$. If $(a,b)$ is an isolated vertex, $\Lambda_{1,m_{a,b}}^{(a,b)}$ is complete and its base locus is empty. Assume now that $(a,b)$ is not an isolated vertex, i.e. $n_a\geq 2$.
Then, as in the proof of Theorem \ref{theo:dimension}, $N_{X,G,\overrightarrow{m}}$ has the codimension $|\overrightarrow{m}|-n$  away from
\begin{equation}\label{eq:def B}
B:= \bigcap_{1\leq r<m_{a,b}}B_{r,m_{a,b}}^{(a,b)}(X_{r}^{(a,b)}).
\end{equation}
Consider the varieties
\[
\begin{array}{l}
\displaystyle B_1=  \mathbb{V}(\sigma^{(a)}_{j_{a1}\cdots m_{a,b}\cdots j_{an_a}} :\forall\, j_{a1}\cdots \widehat{j_{ab}}\cdots j_{an_a}), \\
\displaystyle
B_2 = \mathbb{V}(L_1^{(a,b)},\ldots,L_{m_{a,b}}^{(a,b)}),\\
\displaystyle
B_3 = \mathbb{V}(L_r^{(a,b)},\mathcal G_{r}^{(a,b)}:1\leq r<m_{a,b}).
\end{array}
\]
Using \eqref{eq:base locus nash}, we get that 
\[\begin{array}{c}
\vspace*{1mm}\displaystyle B = B_1\cup \bigcap_{1\leq r<m_{a,b}} 
\left(
\mathbb{V}(L_r^{(a,b)},\mathcal G_r^{(a,b)})\cup\mathbb{V}(L_r^{(a,b)},L_{m_{a,b}}^{(a,b)})
\right) \\ 
\vspace*{1mm}\displaystyle  =B_1\cup \bigcap_{1\leq r<m_{a,b}} 
\mathbb{V}(L_r^{(a,b)})\cap\left(
\mathbb{V}(\mathcal G_r^{(a,b)})\cup\mathbb{V}(L_{m_{a,b}}^{(a,b)})
\right) \\ 
\vspace*{3mm}=\displaystyle  B_1\cup \left(\mathbb{V}(L_r^{(a,b)}:1 \leq r<m_{a,b})\cap\bigcap_{1\leq r<m_{a,b}} 
\left(
\mathbb{V}(\mathcal G_r^{(a,b)})\cup\mathbb{V}(L_{m_{a,b}}^{(a,b)})
\right) \right) \\ 
= \displaystyle  B_1\cup \mathbb{V}(L_1^{(a,b)},\ldots,L_{m_{a,b}}^{(a,b)})\cup\mathbb{V}(L_r^{(a,b)},\mathcal G_r^{(a,b)}:1\leq r<m_{a,b})=B_1\cup B_2\cup B_3
.
\end{array}
\]

Therefore, $N_{X,G,\overrightarrow{m}}$ has codimension $|\overrightarrow{m}|-n$  away from $B=B_1\cup B_2\cup B_3$. It remains to study the codimension of the irreducible components of $N_{X,G,\overrightarrow{m}}$ lying in the $B_i$ for $i\in[3]$. We show that actually, $N_{X,G,\overrightarrow{m}}$ has no irreducible component contained in the $B_i$ for $i\in[3]$.
Assume first that $N_{X,G,\overrightarrow{m}}$ has an irreducible component $Z$ contained in $B_1$. This means that $Z\subseteq N_{X,G,\overrightarrow{m}}\cap B_1$ with $\codim_{\mathcal{M}_G}Z\leq |\overrightarrow{m}|-n$ by \eqref{eq: codim induction}. On the other hand,
the intersection of $N_{X,G,\overrightarrow{m}}$ with $B_1$ leads to the case where the player $(a,b)$ has one strategy less. Mainly, the strategy $m_{a,b}$ of the player $(a,b)$ is removed. 
By induction, the codimension of $ N_{X,G,\overrightarrow{m}}\cap B_1$ in $\mathcal{M}_G$ is 
\[
|\overrightarrow{m}|-1-n+\prod_{r\neq b}^{n_a}d_{a,r}.
\]
Since the vertex $(a,b)$ is not isolated, we get that 
\[
\mathrm{codim}_{\mathcal{M}_G}  N_{X,G,\overrightarrow{m}}\cap B_1=|\overrightarrow{m}|-1-n+\prod_{r\neq b}^{n_a}d_{a,r}\geq |\overrightarrow{m}|-1-n +2=|\overrightarrow{m}|-n+1.
\]
This is a contradiction with the fact that $Z$ is an irreducible component of this intersection with $\codim_{\mathcal{M}_G}Z\leq |\overrightarrow{m}|-n$. Thus, we conclude that $N_{X,G,\overrightarrow{m}}$ does not have any irreducible component contained in $B_1$.

Now, we focus on the component $B_2$. Assume that $N_{X,G,\overrightarrow{m}}$ has an irreducible component $Z$ in $B_2$. 
We show that this is a contradiction since the intersection $N_{X,G,\overrightarrow{m}}\cap \mathbb{V}(L^{(a,b)}_1,\ldots,L^{(a,b)}_{m_{a,b}}) $  has codimension $|\overrightarrow{m}|-n+1$.
If $L^{(a,b)}_1=\cdots=L^{(a,b)}_{m_{a,b}}=0$,
 $M^{(a,b)}_{m_{a,b}}$ has rank at most $1$.
Then, 
\begin{equation*}\label{eq:tech inter}
N_{X,G,\overrightarrow{m}}\cap \mathbb{V}(L^{(a,b)}_1,\ldots,L^{(a,b)}_{m_{a,b}}) 
= N_{X,G,\overrightarrow{m}-m_{a,b}\overrightarrow{e}_{a,b}}\cap \mathbb{V}(L^{(a,b)}_1,\ldots,L^{(a,b)}_{m_{a,b}}) .
\end{equation*}
 Note that by induction $N_{X,G,\overrightarrow{m}-m_{a,b}\overrightarrow{e}_{a,b}}$ has codimension $|\overrightarrow{m}|-m_{a,b}-n+1$. Here we used that for the vector $\overrightarrow{m}-m_{a,b}\overrightarrow{e}_{a,b}$ there is no contribution coming from the player $(a,b)$, so we can treat this case as if we had a player less. Therefore, it is enough to show that $N_{X,G,\overrightarrow{m}-m_{a,b}\overrightarrow{e}_{a,b}} $ and $\mathbb{V}(L^{(a,b)}_1,\ldots,L^{(a,b)}_{m_{a,b}})$ intersect in the expected dimension 
$|\overrightarrow{m}|-m_{a,b}-n+1+m_{a,b}=|\overrightarrow{m}|-n+1$. 
Note that since $L^{(a,b)}_1,\ldots,L^{(a,b)}_{m_{a,b}}$  are linear forms in the $a$--th factor of the Segre variety $\mathcal{M}_G$, it will intersect with the expected dimension the variety defined by the contributions of the players corresponding to the other factors of $\mathcal{M}_G$ (or connected components of $G$). This follows from Bertini's Theorem and the fact that the base loci in Lemma \ref{lem:base locus} corresponding to players in other connected components of $G$ are union of linear spaces supported in other factors of the Segre variety $\mathcal{M}_G$. Therefore, we may assume that $G$ is connected, the players are labeled by $(1,1),\ldots,(1,n)$ and the player $(a,b)$ is the player $(1,n)$. In particular, we are looking to Spohn varieties and the entries of the matrices $M^{(1,i)}$ are linear forms. 

Given, $\lambda^{(b)} = [\lambda_1^{(b)},\lambda_2^{(b)}]\in\PP^1$, the generalized column $C^{(1,b)}_{\lambda^{(b)}}$ of $M^{(1,b)}$ is a linear combination 
\[
M^{(1,b)}\begin{pmatrix}
    \lambda_1^{(b)} \\ \lambda_2^{(b)}
\end{pmatrix}
\]
of the two columns of $M^{(1,b)}$. For $\lambda=(\lambda^{(1)},\ldots,\lambda^{(n)})\in (\PP^1)^n$, 
we defined $\mathcal{C}_{X,\lambda}$ as the zero locus of the linear forms in the entries of the generalized columns $C^{(1,1)}_{\lambda^{(1)}},\ldots, C^{(1,n)}_{\lambda^{(n)}}$.
Following the proof of \cite[Theorem 6.4]{eisenbud2006geometry}, we get that 
\[
\mathcal{V}_{X}=\bigcup_{\lambda\in (\PP^1)^n}\mathcal{C}_{X,\lambda}
\]
and we have a map 
\begin{equation}\label{eq:fiber map spohn}
\mathcal{V}_{X} \longrightarrow (\PP^1)^n
\end{equation}
whose fibers are the varieties $\mathcal{C}_{X,\lambda}$ defined by the generalized columns.
In the proof of \cite[Theorem 6]{portakal2022geometry} it is shown that for a generic game $X$, all the fibers of \eqref{eq:fiber map spohn} have codimension $d_1+\cdots+d_n$. In particular, $\mathcal{C}_{X,\lambda}$ has codimension $d_1+\cdots+d_n$ for every $\lambda$ of the form $\lambda=(\lambda^{(1)},\ldots,\lambda^{(n-1)},[1,0])$. Note that for such $\lambda$, the equations of $\mathcal{C}_{X,\lambda}$ coming from the player $(1,n)$ are $L_1^{(1,n)},\ldots,L_{d_{1,n}}^{(1,n)}$. Restricting the above construction to the first $m_{1,i}$ rows of the matrix $M^{(1,i)}$ for $i\in[n-1]$, we obtained the map 
\begin{equation}\label{eq:fiber map nash}
N_{X,G,\overrightarrow{m}-m_{a,b}\overrightarrow{e}_{a,b}}\cap \mathbb{V}(L^{(1,n)}_1,\ldots,L^{(1,n)}_{m_{a,b}})\longrightarrow (\PP^1)^{n-1}\times\{[1,0]\}
\end{equation}
whose fibers are the subvariety of  $\mathcal{C}_{X,\lambda}$ for  $\lambda=(\lambda^{(1)},\ldots,\lambda^{(n-1)},[1,0])$ given by the corresponding rows of the corresponding generalized columns. Since  $\mathcal{C}_{X,\lambda}$ is a linear subspace of the expected dimension, we deduce that the fibers of the map \eqref{eq:fiber map nash} have codimension $|\overrightarrow{m}|$. By Theorem \cite[Theorem 11.12]{harris2013algebraic}, we deduce that 
\[
\codim_{\mathcal{M}_G} N_{X,G,\overrightarrow{m}-m_{a,b}\overrightarrow{e}_{a,b}}\cap \mathbb{V}(L^{(1,n)}_1,\ldots,L^{(1,n)}_{m_{a,b}}) = |\overrightarrow{m}|-n+1.
\]
We conclude that $ N_{X,G,\overrightarrow{m}}$ has no irreducible component contained in 
$B_2$.

Lastly, assume that $ N_{X,G,\overrightarrow{m}}$ has an irreducible component $Z$ contained in $B_3$. The matrix $M^{(a,b)}$ restricted to $B_3$ has rank $1$. Therefore, the intersection $N_{X,G,\overrightarrow{m}} \cap B_3$ equals 
\begin{equation}\label{eq: third base loci}
N_{X,G,\overrightarrow{m}-m_{a,b}\overrightarrow{e}_{a,b}}\cap B_3 =  N_{X,G,\overrightarrow{m}-m_{a,b}\overrightarrow{e}_{a,b}}\cap\mathbb{V}(L_r^{(a,b)}:1\leq r<m_{a,b})\cap\mathbb{V}(\mathcal G_r^{(a,b)}:1\leq r<m_{a,b}).
\end{equation}
As before, $ N_{X,G,\overrightarrow{m}-m_{a,b}\overrightarrow{e}_{a,b}}$ has codimension $|\overrightarrow{m}|-m_{a,b}-n+1$. By the previous case, the intersection
\[
N_{X,G,\overrightarrow{m}-m_{a,b}\overrightarrow{e}_{a,b}}\cap\mathbb{V}(L_r^{(a,b)}:1\leq r<m_{a,b})
\]
has codimension $|\overrightarrow{m}|-m_{a,b}-n+1+m_{a,b}-1=|\overrightarrow{m}|-n$. Since 
$m_{a,b}>2$ and the polynomials $\mathcal G_r^{(a,b)}$ can be chosen generic, we get that the intersection \eqref{eq: third base loci} has codimension at least $|\overrightarrow{m}|-n+1$. We deduce that $N_{X,G,\overrightarrow{m}}$ can not have an irreducible component contained in $B_3$.

We conclude that the intersection of $N_{X,G,\overrightarrow{m}}$ with $B$ has codimension at least $|\overrightarrow{m}|-n+1$. Therefore, $N_{X,G,\overrightarrow{m}}$ does not have an irreducible component contained in $B$ and the codimension of $N_{X,G,\overrightarrow{m}}$ is $|\overrightarrow{m}|-n$.
Now, we show that for generic games  $N_{X,G,\overrightarrow{m}}$ does not have an irreducible component contained in a hyperplane of the form $\{p_{j_{11}\cdots j_{kn_k}}=0\}$ or $\{p_{+\cdots +}=0\}$.
Let $H$ be such a hyperplane and assume that there exists an irreducible component $Z$ of $N_{X,G,\overrightarrow{m}}$ contained in $H$. 
 As before, using Equation \eqref{eq:inclusion induction} and Bertini's Theorem,  we deduce that the intersection $N_{X,G,\overrightarrow{m}}\cap H$ has codimension $|\overrightarrow{m}|-n+1$ away from $B$. This implies that $Z$ is contained in $B$ which is a contradiction.
\end{proof}

In \cite[Conjecture 24]{portakal2022geometry}, it was conjectured the dimension and irreducibility of generic Spohn CI varieties of binary games. 
The dimension part of the conjecture was proven in \cite{portakal2024game}. Section \ref{sec:dim} extends this results to nonbinary games. However, the irreducibility of generic Spohn CI varieties conjectured in \cite[Conjecture 24]{portakal2022geometry} was not approached before. 
Using the arguments used in Proposition \ref{prop:eq Nash CI}, we derive the irreducibility of generic Nash CI varieties.

\begin{theorem}
    \label{theo: irreducibility}
    Let $G$ be a cluster graph with at least one edge. Then, the Nash CI variety $\mathcal{V}_{X,G}$ is irreducible for generic games $X$.
\end{theorem}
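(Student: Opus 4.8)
The plan is to prove the theorem for the variety $N_{X,G}$ of Proposition~\ref{prop:eq Nash CI}, which for generic $X$ coincides with $\Vc$, and to run the very same induction on the integer vector $\overrightarrow{m}$ used there, but now upgrading the dimension statement to the irreducibility clause (the second bullet) of Bertini's Theorem~\ref{theo:Bertini}. Concretely, I would show by induction on $\overrightarrow{m}$ (with $2\le m_{a,b}\le d_{a,b}$) that for generic $X$ the variety $N_{X,G,\overrightarrow{m}}$ is irreducible. Since the ambient $\Mg=\PP^{D_1-1}\times\cdots\times\PP^{D_k-1}$ is an irreducible Segre variety, the induction starts from an irreducible space, and the top vector $\overrightarrow{m}=(d_{1,1},\dots,d_{k,n_k})$ recovers $N_{X,G}$.

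For the base case $\overrightarrow{m}=(2,\dots,2)$ the matrices $\widetilde{M}^{(a,b)}$ are square, so $N_{X,G,(2,\dots,2)}$ is cut out of $\Mg$ by the single divisors $F^{(a,b)}_{1,2}$, one per player. Ordering the players arbitrarily and intersecting one at a time, each divisor is a generic member of its linear system $\Lambda^{(a,b)}_{1,2}$ for generic $X$, so Theorem~\ref{theo:Bertini} makes the intersection irreducible away from the base locus, provided the variety being cut has dimension at least $2$. The components that could appear inside the base locus are exactly the $B_1,B_2,B_3$ analyzed in Proposition~\ref{prop:eq Nash CI}; their codimension estimates show that $N_{X,G,(2,\dots,2)}$ has no irreducible component contained in the base locus, so the surviving Bertini component is the whole variety. (For isolated vertices the linear system is complete with empty base locus, making these steps cleaner.)

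For the inductive step $\overrightarrow{m}=\overrightarrow{m}'+e_{(a,b)}$ with $m_{a,b}>2$, the passage from $N_{X,G,\overrightarrow{m}'}$ to $N_{X,G,\overrightarrow{m}}$ adds the $m_{a,b}-1$ minors $F^{(a,b)}_{r,m_{a,b}}$, which is not a single hyperplane section; the key observation that restores Bertini is that on the dense open locus of $N_{X,G,\overrightarrow{m}'}$ away from $\mathbb{V}(L_1^{(a,b)},\mathcal G_1^{(a,b)})$ the first row of $\widetilde{M}^{(a,b)}$ is nonzero, so the rank-one condition on the first $m_{a,b}-1$ rows together with the vanishing of the single minor $F^{(a,b)}_{1,m_{a,b}}$ already forces every $F^{(a,b)}_{r,m_{a,b}}$ to vanish. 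Thus away from the base locus $N_{X,G,\overrightarrow{m}}$ agrees with $N_{X,G,\overrightarrow{m}'}\cap\mathbb{V}(F^{(a,b)}_{1,m_{a,b}})$, a single generic divisor section of the irreducible (by induction) variety $N_{X,G,\overrightarrow{m}'}$; Theorem~\ref{theo:Bertini} makes this section irreducible away from $B_{1,m_{a,b}}^{(a,b)}(X_1^{(a,b)})$. Invoking once more the base-locus dimension estimates of Proposition~\ref{prop:eq Nash CI} (the $B_1,B_2,B_3$ analysis), no component of $N_{X,G,\overrightarrow{m}}$ lies in the base locus, so $N_{X,G,\overrightarrow{m}}$ is the closure of an irreducible set and is therefore irreducible.

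I expect the main obstacle to be the hypothesis $\dim\ge 2$ in the irreducibility clause of Bertini's Theorem: the argument only yields an irreducible section when the variety being cut has dimension at least two, and it would fail if some intermediate $N_{X,G,\overrightarrow{m}}$ dropped to dimension $0$ or $1$ at the penultimate cut. This is exactly where the assumption that $G$ has at least one edge enters. By Corollary~\ref{cor:dim Nash} each connected component $G_i$ contributes $D_i-S_i+n_i-1$ to the dimension, which is $0$ for an isolated vertex but at least $1$ for a clique of size $\ge 2$ (for $n_i=2$ it equals $(d_{i,1}-1)(d_{i,2}-1)\ge 1$, and it only grows for larger cliques), so $\dim\Vc\ge 1$. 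Because each inductive step lowers the dimension by exactly one, every $N_{X,G,\overrightarrow{m}'}$ that Bertini is applied to has dimension at least $\dim\Vc+1\ge 2$, and likewise every partial intersection in the base case has dimension at least $\dim N_{X,G,(2,\dots,2)}\ge\dim\Vc\ge 1$ before the final cut. Carrying out this dimension bookkeeping uniformly, and separating the isolated vertices from the genuine cliques throughout, is the delicate part that the full proof must verify.
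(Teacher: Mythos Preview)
Your overall plan—induct on $\overrightarrow{m}$ starting from $(2,\ldots,2)$, apply Bertini's irreducibility clause at each cut, and rule out components inside the base locus by dimension estimates—is exactly the paper's strategy, and your treatment of the $\dim\ge 2$ hypothesis via Corollary~\ref{cor:dim Nash} is correct. Your observation in the inductive step, that away from $\mathbb{V}(L_1^{(a,b)},\mathcal G_{X,1}^{(a,b)})$ the single minor $F^{(a,b)}_{1,m_{a,b}}$ already forces all the new rank conditions, is valid and makes the passage to a single Bertini divisor transparent; the paper writes the multi-divisor cut in one go and is less explicit about why Bertini applies.

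The gap is in the phrase ``invoking the base-locus dimension estimates of Proposition~\ref{prop:eq Nash CI}.'' Those estimates are proved for the \emph{intersection} $B=\bigcap_{r}B^{(a,b)}_{r,m_{a,b}}(X^{(a,b)}_r)=B_1\cup B_2\cup B_3$, whereas your single-minor reduction lands on the strictly larger locus $B^{(a,b)}_{1,m_{a,b}}(X^{(a,b)}_1)$, whose pieces $\mathbb{V}(L_1^{(a,b)},L_{m_{a,b}}^{(a,b)})$ and $\mathbb{V}(L_1^{(a,b)},\mathcal G_{X,1}^{(a,b)})$ are not covered by the $B_2$ and $B_3$ estimates as stated. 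Likewise, in the base case the paper does \emph{not} merely cite Proposition~\ref{prop:eq Nash CI}: it runs a separate, player-by-player analysis in which it also tracks the singular locus of each intermediate $N_{X,G,(a,b)}$ (bounded, via the smoothness clause of Bertini, by the union of all earlier base loci $B^{(a',b')}$) and then carries out fresh codimension estimates for each of the three components of every $B^{(a',b')}$. So your scaffold is right, but the step you delegate to Proposition~\ref{prop:eq Nash CI} is precisely where most of the work in the paper's proof lives; you would need either to redo that component analysis for the larger single base loci, or to vary the pivot row $r$ and argue that the resulting irreducible opens glue to a single component before falling back on the $B_1\cup B_2\cup B_3$ estimates.
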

\begin{proof}
    First, note that if $\mathcal{V}_{X,G}$ is empty for generic games, then it is also irreducible. So we may assume that  $\mathcal{V}_{X,G}$ is nonempty for generic games. Then, since $G$ has at least an edge, we have that $\mathcal{V}_{X,G}$ has positive dimension for generic games by Corollary \ref{cor:dim Nash}. Therefore, we can apply the behaviour of irreducibility under Bertini's Theorem.

   As in the proof of Proposition \ref{prop:eq Nash CI}, we show that $N_{X,G,\overrightarrow{m}}$ is irreducible  by induction on $\overrightarrow{m}$ and the number of strategies.
   The initial case is the case of binary games, which is analogous to the case when $\overrightarrow{m}=(2,\ldots,2)$. In other words, assume that $d_{a,b}=2$ for any player $(a,b)$. Then, the matrices $\widetilde{M}^{(a,b)}$ are square matrices and we denote their determinant by $F^{(a,b)}=F_{1,2}^{(a,b)}$.
   For each player $(a,b)$, we consider the variety 
   \[
   N_{X,G,(a,b)}=\mathbb{V}(F^{(r,s)}:(r,s)\leq (a,b)).
   \]
   In particular, we have that $N_{X,G,(k,n_k)}=N_{X,G}=\mathcal{V}_{X,G}$. We show by induction on the player $(a,b)$ that $ N_{X,G,(a,b)}$ is irreducible for a generic game $X$. Let $X$ be a generic game.
   For $(a,b)=(1,1)$, we have that $N_{X,G,(1,1)}=\mathbb{V}(F^{(1,1)})$ is irreducible  since for generic payoff tensors the polynomial $F^{(1,1)}$ is irreducible. Now, for $(a,b)>(1,1)$ we have that 
   \[
    N_{X,G,(a,b)}= \mathbb{V}(F^{(r,s)}:(r,s)<(a,b))\cap\mathbb{V}(F^{(a,b)}).
   \]
   By induction, $\mathbb{V}(F^{(r,s)}:(r,s)<(a,b))$ is irreducible. Therefore,
    by Bertini's Theorem, $N_{X,G,(a,b)}$ is irreducible away from the singular locus of $ \mathbb{V}(F^{(r,s)}:(r,s)<(a,b))$ and the base locus $B^{(a,b)}:=B_{1,2}^{(a,b)}$ of the linear series  $E^{(a,b)}:=\Lambda_{1,2}^{(a,b)}$.  Again, by Bertini's Theorem, the singular locus of $ \mathbb{V}(F^{(r,s)}:(r,s)<(a,b))$ is contained in the union
    \[
    \bigcup_{(r,s)<(a,b)}B^{(r,s)}.
    \]
    We conclude that $N_{X,G,(a,b)}$ is irreducible away from the union
      \begin{equation}\label{eq:union irred}
    \bigcup_{(r,s)\leq(a,b)}B^{(r,s)}.
    \end{equation}
    Now, we show that $N_{X,G,(a,b)}$ does not have any irreducible component contained in the union \eqref{eq:union irred}. Assume on the contrary that $X$ has an irreducible component $Z$ contained in \eqref{eq:union irred}. Then, there exists $(a',b')\leq (a,b)$ such that $Z$ is contained in $B^{(a',b')}$. Note that $(a',b')$ can not be an isolated vertex of $G$ since in that case $B^{(a',b')}$ is empty.
    In particular, we have that
    \[
    Z\subseteq N_{X,G,(a,b)}\cap B^{(a',b')} = \mathbb{V}(F^{(r,s)}:(r,s)\leq (a,b)\text{ and } (r,s)\neq (a',b')) \cap B^{(a',b')}.
    \]
    Note that the first variety in the above intersection has dimension  $2^{n_1}+\cdots + 2^{n_k}-k-n_{a,b}+1$ where $n_{a,b}$ is the number of players $(r,s)$ with $(r,s)\leq (a,b)$.
    By \cite[Lemma 8]{portakal2024game}, $ B^{(a',b')}$ has three irreducible components:
    \[
\begin{array}{l}
B_1:=   \mathbb{V}(\sigma^{(a')}_{j_{a'1}\cdots 2\cdots j_{a'n_{a'}}} :\text{ for every }\, j_{a'1}\cdots \widehat{j_{a'b'}}\cdots j_{a'n_{a'}}),
\\
B_2:=     \mathbb{V}(\sigma^{(a')}_{j_{a'1}\cdots 1\cdots j_{a'n_{a'}}} :\text{ for every }\, j_{a'1}\cdots \widehat{j_{a'b'}}\cdots j_{a'n_{a'}}),
     \\
 B_3:=    \mathbb{V}(L_1^{(a',b')},L_{2}^{(a',b')}).

     \end{array}
\]

Assume first that $Z$ is contained in $B_1$. This means that we are erasing the second strategy of the player $(a',b')$ from the game. Since the player  $(a',b')$ has binary strategies, the equations are the same as if we erase this player from the game. Therefore, the dimension is
\begin{equation*}
    \label{eq: ineq dim irred}
    \dim\left( \mathbb{V}(F^{(r,s)}:(r,s)\leq (a,b)\text{ and } (r,s)\neq (a',b'))\cap B_1\right) = \sum_{i\in [k]\backslash \{a'\}}2^{n_i}+2^{n_{a'}-1}-k-n_{a,b}+1
\end{equation*}
because $(a',b')$ is not isolated. In particular, $\displaystyle\dim Z\leq  \sum_{i\in [k]\backslash \{a'\}}2^{n_i}+2^{n_{a'}-1}-k-n_{a,b}+1$.

On the other hand, by Corollary \ref{cor:dim Nash}, $\dim(Z)=2^{n_1}+\cdots+ 2^{n_k}-n_{a,b}-k$. Therefore, we get that 
    \[
    \dim Z  \leq  \sum_{i\in [k]\backslash \{a'\}}2^{n_i}+2^{n_{a'}-1}-k-n_{a,b}+1= \sum_{i=1}^k2^{n_i}-2^{n_{a'}-1}-k-n_{a,b}+1=\dim Z+1-2^{n_{a'}-1},
    \]
    and hence, $1-2^{n_{a'}-1}=0$, which implies that $n_{a'}=1$. This is a contradiction since $(a',b')$ is not an isolated vertex of $G$ and $n_{a',b'}\geq 2$. The same argument works if we consider the component $B_2$ of the base locus.

    Assume now that $Z$ is contained in $B_3$. In other words,  $Z$ is contained in 
    \begin{equation}\label{eq: big cap base locus}
    N_{X,G,(a,b)}\cap B_3=\mathbb{V}(F^{(r,s)}:(r,s)\leq (a,b),r\neq a')\cap \mathbb{V}(L_1^{(a',b')},L_2^{(a',b')},F^{(a',s)}:s\neq b').
    \end{equation}
    We now calculate the dimension of \eqref{eq: big cap base locus}.
    We focus on the second term in the intersection \eqref{eq: big cap base locus}. For $s\in[n_{a'}]$, we consider the matrix 
    \[
    \widehat{M}^{(a',s)}:=\begin{pmatrix}
        L_1^{(a',s)}&\mathcal G_1^{(a',s)}\\ L_1^{(a',s)}+L_2^{(a',s)}&\mathcal G_1^{(a',s)}+\mathcal G_2^{(a',s)}
    \end{pmatrix}
    \]
    obtained by summing the two rows of of $\widetilde{M}^{(a',s)}$. Note that 
    $ L_1^{(a',s)}+L_2^{(a',s)}=L_1^{(a',b')}+L_2^{(a',b')}$. Therefore, 
    modulo $L_1^{(a',b')}=L_2^{(a',b')}=0$, the determinant of the matrix $\widehat{M}^{(a',s)}$ is 
    $ L_1^{(a',s)} (\mathcal G_1^{(a',s)}+\mathcal G_2^{(a',s)}) $. We deduce that the second term in the intersection \eqref{eq: big cap base locus} is 
    \begin{equation}
        \label{eq: some intersection}
        \mathbb{V}\left(
        L_1^{(a',b')},L_2^{(a',b')},  L_1^{(a',s)} (\mathcal G_1^{(a',s)}+\mathcal G_2^{(a',s)}): s\in[n_{a'}], s\neq b', \text{ and } (r,s)\leq (a,b)
        \right).
    \end{equation}
    The irreducible components of the variety  \eqref{eq: some intersection} are of the form 
    \begin{equation}
        \label{eq: comp some intersection}
    \mathbb{V}\left(
        L_1^{(a',b')},L_2^{(a',b')},  L_1^{(a',s)}, \mathcal G_1^{(a',t)}+\mathcal G_2^{(a',t)}: s\in S \text{ and }t\in T
        \right),
    \end{equation}
    for a partition $S\sqcup T = \{(a',s):s\neq b' \text{ and } (r,s)\leq (a,b)\}$.
     The linear subspace $   \mathbb{V}(
        L_1^{(a',b')}, L_1^{(a',s)}: s\in S)$ has the expected dimension since each of the linear forms has a variable that does not appear in the other. Mainly, $\sigma^{(a')}_{2\cdots 212 \cdots 2}$ only appears in $L_1^{(a',s)}$ (here the index $2$ appears in the position $s$). Now, since the variable $\sigma^{(a')}_{2\cdots 2}$ only appears in the linear form $L_2^{(a',b')}$, the linear subspace  $   \mathbb{V}(
       L_2^{(a',b')}, L_1^{(a',b')}, L_1^{(a',s)}: s\in S)$ has the expected dimension.
    Note that the polynomial $\mathcal G_1^{(a',t)}+\mathcal G_2^{(a',t)}$ is a generic element in the corresponding complete linear system. Therefore, these polynomials intersect with the expected dimension the first term in the intersection \eqref{eq: big cap base locus}. Now, the base locus of the polynomials appearing in the first term of the intersection \eqref{eq: big cap base locus} are supported in a factor of the Segre variety $\mathcal{M}_G$ distinct than the $a'$--th one. On the other hand, the linear forms $L_1^{(a',s)}$ are supported in the $a'$--th factor of $\mathcal{M}_G$. Therefore, the varieties \eqref{eq: comp some intersection} intersect the first term of the intersection \eqref{eq: big cap base locus} transversely. We deduce that the codimension of such intersection is the total number of equations, which is $n_{a,b}+1$. 
    We conclude that the dimension of \eqref{eq: big cap base locus} is 
    \[
    2^{n_1}+\cdots+2^{n_k}-k-n_{a,b}-1.
    \]
    This is a contradiction since $Z$ has dimension $2^{n_1}+\cdots + 2^{n_k}-n_{a,b}-k$ by  Corollary \ref{cor:dim Nash}. We conclude that $Z$ can not be contained in the base locus $B^{(a',b')}$, and hence, $N_{X,G,(a,b)}$ is irreducible. We conclude that the variety $N_{X,G,\overrightarrow{m}}$ is irreducible for the initial case of the induction.

    Now, we do the induction step. Assume that $\overrightarrow{m}>(2,\ldots,2)$ and $N_{X,G,\overrightarrow{m'}}$ is irreducible for $\overrightarrow{m'}<\overrightarrow{m}$. There, there exists a player $(a,b)$ with $m_{a,b}>2$. In particular, we have that  
    \[
    N_{X,G,\overrightarrow{m}} = N_{X,G,\overrightarrow{m}-\overrightarrow{e}_{a,b}}\cap\mathbb{V}(F^{(a,b)}_{r,m_{a,b}}:\text{ for } 1\leq r<m_{a,b}).
    \] 
    Now, by Bertini's Theorem, $N_{X,G,\overrightarrow{m}}$ is irreducible away from $B$ (see \eqref{eq:def B} for the definition of $B$) and the singular locus of $ N_{X,G,\overrightarrow{m}-\overrightarrow{e}_{a,b}}$. By the proof of Proposition \ref{prop:eq Nash CI}, we know that $N_{X,G,\overrightarrow{m}}$ has no irreducible component contained in $B$. Denote the singular locus of $ N_{X,G,\overrightarrow{m}-\overrightarrow{e}_{a,b}}$ by $S$ and let $Z$ be an irreducible component of $N_{X,G,\overrightarrow{m}}$ contained in $S$. By Bertini's Theorem, every irreducible component of $S$ is contained in the a base locus of the form $B^{(a',b')}_r$. The three irreducible components of this base locus are described in \eqref{eq:base locus nash}. Therefore, $Z$ is contained in one of these three components. 
    The same arguments used in the proof of Proposition \ref{prop:eq Nash CI} show that if $Z$ is contained in one of these components, then $Z$ does not have the correct dimension and hence, $Z$ can not be an irreducible component. We deduce that $N_{X,G,\overrightarrow{m}}$ is irreducible.
\end{proof}

\subsection{Nash meets Chow: degree of Nash CI varieties}\label{sec:Nash and Chow}

In this section we use intersection theory to calculate the degree of Nash CI varieties.
Using the K\"unneth formula (see \cite[Theorem 2.10]{eisenbud20163264}), we have that the Chow ring of $\Mg$, denoted by $A[G]$, is 
\begin{equation}\label{eq:chow segre}
A[G]=\Z[x_1,\ldots,x_k]/\langle x_1^{D_1},\ldots,x_k^{D_k}\rangle.
\end{equation}
By Theorem~\ref{theo:dimension}, the class of the Nash CI variety $[\Vc]$ in $A[G]$ is either zero or a polynomial of degree $\sum_{i=1}^k S_i-n$. Moreover, by Proposition \ref{prop:eq Nash CI}, the Nash CI variety is a determinantal variety. Therefore, we can use Porteous' formula \cite[Theorem 12.4]{eisenbud20163264} to compute the class of $\Vc$ in $A[G]$. In order to apply this formula, we use the following lemma.

\begin{lemma}\label{lemma:matrix}
    Consider the $l\times l$ matrix of the form
    \begin{equation}\label{eq:matrix}
    \mathcal N_l=\begin{pmatrix}
        a+b& ab&0  &       &    0   &0&0      \\
        1 & a+b& ab &\cdots &   0   &0&0     \\
        0 & 1 & a+b &      &    0    &0&0       \\
          & \vdots& & \ddots &      & \vdots   &   \\
        0 & 0    &0&        &  a+b & ab &0 \\
         0 & 0    &0&  \cdots      &  1 & a+b& ab\\
          0 & 0    &0&        & 0  &  1&a+b
    \end{pmatrix}
    \end{equation}
    for $l\geq 1$. Then, the determinant of $\mathcal N_l$ is
    $
    \det \mathcal N_l = \displaystyle \sum_{i=0}^la^ib^{l-i}.
   $
\end{lemma}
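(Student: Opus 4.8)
The plan is to reduce the determinant to a three-term linear recurrence by cofactor expansion, and then identify the resulting sequence with the symmetric sum $\sum_{i=0}^l a^i b^{l-i}$ by induction. Set $d_l:=\det\mathcal N_l$, with the conventions $d_0=1$ (the empty determinant) and $d_1=a+b$. The tridiagonal shape of $\mathcal N_l$ is what makes everything work: only two entries of the last row are nonzero, the corner entry $a+b$ in position $(l,l)$ and the subdiagonal entry $1$ in position $(l,l-1)$.

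First I would expand $\det\mathcal N_l$ along its last row. The corner term contributes $(a+b)\,d_{l-1}$, since deleting the last row and last column leaves exactly $\mathcal N_{l-1}$. For the term coming from the entry $1$ in position $(l,l-1)$, the cofactor sign is $(-1)^{l+(l-1)}=-1$, and the associated $(l-1)\times(l-1)$ minor has a single nonzero entry $ab$ in its last column (inherited from the superdiagonal entry in position $(l-1,l)$); expanding along that column reduces the minor to $ab\,d_{l-2}$. Collecting the two contributions yields
\[
d_l=(a+b)\,d_{l-1}-ab\,d_{l-2},\qquad l\ge 2.
\]

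Next I would verify that the candidate $S_l:=\sum_{i=0}^l a^i b^{l-i}$ satisfies the same recurrence and base cases, so that $d_l=S_l$ by induction on $l$. The base cases $S_0=1$ and $S_1=a+b$ are immediate. For the recurrence, I would expand $(a+b)S_{l-1}-ab\,S_{l-2}$, reindex the three resulting products, and observe that the two shifted sums telescope, leaving precisely the extreme monomials $a^l$ and $b^l$ together with all interior terms and thus reconstituting $S_l=\sum_{i=0}^l a^i b^{l-i}$.

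The computation is entirely routine, so there is no genuine obstacle; the only points demanding care are the cofactor sign and the evaluation of the off-diagonal minor (so that the second term is $-ab\,d_{l-2}$ rather than $-d_{l-2}$ or $+ab\,d_{l-2}$), and the index bookkeeping in the recurrence check. As an alternative that sidesteps the inductive verification, I could solve the recurrence directly: its characteristic polynomial is $x^2-(a+b)x+ab=(x-a)(x-b)$, so for $a\neq b$ the general solution is $d_l=Aa^l+Bb^l$, and fitting $d_0=1$, $d_1=a+b$ gives $A=a/(a-b)$ and $B=-b/(a-b)$, whence $d_l=(a^{l+1}-b^{l+1})/(a-b)=\sum_{i=0}^l a^i b^{l-i}$; the degenerate case $a=b$ then follows because both sides are polynomials in $a,b$ agreeing on a Zariski-dense set.
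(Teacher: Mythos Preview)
Your proof is correct and follows essentially the same approach as the paper: both derive the recurrence $\det\mathcal N_l=(a+b)\det\mathcal N_{l-1}-ab\,\det\mathcal N_{l-2}$ by cofactor expansion of the tridiagonal matrix (the paper expands along the first column, you along the last row) and conclude by induction. Your write-up is in fact more detailed than the paper's, which simply states the recurrence and says ``the result follows by induction.''
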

\begin{proof}
The statement holds for $l=1,2$. For $l\ge3$, developing the determinant of \eqref{eq:matrix} according to the first column,
we get that 
 $$\det\mathcal N_l=(a+b)\det(\mathcal N_{l-1})-1\cdot ab\cdot\det(\mathcal N_{l-2})$$
Then, the result follows by induction.
\end{proof}


\begin{theorem}\label{theo: class of Nash}
Let $\mathcal S\subset[k]$ be the subset of isolated vertices and let $\widehat{x_i}:=\sum_{u\in[k]\setminus\{i\}}x_u$. Then, 
    for a generic game $X$, the class of $\Vc$ in $A[G]$ is the polynomial
    \begin{equation}
        \label{eq:poly chow}
        [\Vc]= \displaystyle\prod_{i\in\mathcal S}  \widehat{x_i}^{d_i-1} \,\prod_{i\not\in\mathcal S}\prod_{j=1}^{n_i}\left(    
        \sum_{l=0}^{d_{i,j}-1} x_i^l\left(\sum_{u=1}^kx_u\right)^{d_{i,j}-1-l}
        \right).
    \end{equation}
\end{theorem}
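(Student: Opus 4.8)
The plan is to express $\Vc$ as a proper intersection of determinantal degeneracy loci, one per player, to compute the class of each by the Thom--Porteous formula, and to take the product in $A[G]$. By Proposition~\ref{prop:eq Nash CI} we may assume $\Vc=N_{X,G}$ for generic $X$, and by construction $N_{X,G}=\bigcap_{(a,b)}D^{(a,b)}$, where $D^{(a,b)}\subseteq\Mg$ is the locus on which the $d_{a,b}\times 2$ matrix $\widetilde M^{(a,b)}$ has rank at most $1$. Each such locus is a degeneracy locus of a morphism of vector bundles on $\Mg$, so the computation splits into identifying the bundle map, running Porteous to obtain a per-player factor, and multiplying. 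Throughout write $\xi:=\sum_{u=1}^k x_u$.

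First I would read the twists off the entries of $\widetilde M^{(a,b)}$. For a non-isolated vertex the first column consists of the linear forms $L_l^{(a,b)}$, of multidegree $e_a$ (so $c_1=x_a$), and the second of the forms $\mathcal G^{(a,b)}_{X,l}$, which are products of one coordinate from each factor and hence have multidegree $\mathbf 1=(1,\dots,1)$ (so $c_1=\xi$). Transposing $\widetilde M^{(a,b)}$ presents a map
\[
\psi^{(a,b)}:\mathcal O_{\Mg}^{\oplus d_{a,b}}\longrightarrow \mathcal O_{\Mg}(e_a)\oplus\mathcal O_{\Mg}(\mathbf 1),
\]
whose rank-$\le 1$ locus is exactly $D^{(a,b)}$; for an isolated vertex $i$ the first column is constant and $\mathcal G^{(a,b)}_{X,l}$ has multidegree $\widehat{e_i}$, giving $\mathcal O_{\Mg}^{\oplus d_i}\to \mathcal O_{\Mg}\oplus\mathcal O_{\Mg}(\widehat{e_i})$. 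By Proposition~\ref{prop:eq Nash CI}, $D^{(a,b)}$ carries the expected codimension $(d_{a,b}-1)(2-1)=d_{a,b}-1$.

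Next I would apply Thom--Porteous to $\psi^{(a,b)}$ with source rank $e=d_{a,b}$, target rank $f=2$, and $r=1$, so that $[D^{(a,b)}]$ is the $(d_{a,b}-1)\times(d_{a,b}-1)$ determinant $\det\bigl(c_{1+j-i}(F)\bigr)$, where $F$ is the target bundle and $c(F-E)=c(F)$ because the source $E$ is trivial. For a non-isolated vertex $c(F)=(1+x_a)(1+\xi)$, so $c_1(F)=x_a+\xi$, $c_2(F)=x_a\xi$, and $c_t(F)=0$ for $t\ge 3$. The Porteous determinant is then precisely the tridiagonal matrix $\mathcal N_{d_{a,b}-1}$ of Lemma~\ref{lemma:matrix} under the substitution $(a,b)\mapsto(x_a,\xi)$, and the lemma evaluates it to $\sum_{l=0}^{d_{a,b}-1}x_a^l\,\xi^{\,d_{a,b}-1-l}$, exactly the factor for player $(i,j)$ with $i\notin\mathcal S$ in \eqref{eq:poly chow}. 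For an isolated vertex $i\in\mathcal S$ the target is $\mathcal O\oplus\mathcal O(\widehat{e_i})$, so $c_1(F)=\widehat{x_i}$ and $c_2(F)=0$; the same determinant is $\mathcal N_{d_i-1}$ with $(x_a,\xi)$ replaced by $(\widehat{x_i},0)$, which collapses to $\widehat{x_i}^{\,d_i-1}$.

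Finally I would multiply the per-player factors, obtaining \eqref{eq:poly chow} in $A[G]=\Z[x_1,\dots,x_k]/\langle x_1^{D_1},\dots,x_k^{D_k}\rangle$. The hard part is justifying this last multiplicativity, namely $[\Vc]=\prod_{(a,b)}[D^{(a,b)}]$: one must know both that the loci $D^{(a,b)}$ meet properly and that the resulting cycle has multiplicity one. Proper intersection is exactly the codimension statement of Proposition~\ref{prop:eq Nash CI}, since $\sum_{(a,b)}(d_{a,b}-1)=\sum_i S_i-n=\operatorname{codim}_{\Mg}\Vc$. Because each $D^{(a,b)}$ is a determinantal locus of the expected codimension it is Cohen--Macaulay, so the proper intersection carries no excess contribution, and generic reducedness follows from the smoothness of $\Vc$ away from the coordinate hyperplanes established in Theorem~\ref{theo:smooth}. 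Hence the intersection product in $A[G]$ equals the fundamental class $[\Vc]$, and assembling the factors computed above yields \eqref{eq:poly chow}.
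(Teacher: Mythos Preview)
Your proof is correct and follows essentially the same approach as the paper: identify $\Vc=N_{X,G}$ via Proposition~\ref{prop:eq Nash CI}, realize each player's contribution as a degeneracy locus of the same bundle map $\mathcal O^{\oplus d_{a,b}}\to\mathcal O(e_a)\oplus\mathcal O(\mathbf 1)$ (or $\mathcal O\oplus\mathcal O(\widehat{e_i})$ for isolated vertices), apply Porteous and Lemma~\ref{lemma:matrix}, and multiply using the expected-codimension intersection. Your explicit justification of multiplicativity via Cohen--Macaulayness and the generic smoothness of Theorem~\ref{theo:smooth} is a nice addition that the paper leaves implicit.
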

\begin{proof}
Let $X$ be a generic game.
We denote by $\mathcal{M}^{(i,l)}$ the variety defined by the $2\times2$ minors of the matrix $\widetilde{M}^{(i,l)}$ (Section~\ref{sec:equations}). Then, the Nash CI variety is the intersection $\mathcal{M}^{(1,1)}\cap\cdots \cap \mathcal{M}^{(k,n_k)}$. The expected codimension of $\mathcal{M}^{(i,l)}$ is $d_{i,l}-1$. By Theorem \ref{theo:dimension}, the codimension of $\Vc$ is $\sum_i\sum_l (d_{i,l}-1)$, which is the expected codimension of $\Vc$. Thus, the codimension of $\mathcal{M}^{(i,l)}$ is $d_{i,l}-1$. Therefore, we can apply  Porteous' formula (see \cite[Theorem 12.4]{eisenbud20163264}). Assume first that $(i,l)$ is not an isolated vertex. Then, the matrix $\widetilde{M}^{(i,l)}$ gives a map between the vector bundles $\mathcal{E}\rightarrow\mathcal{F}$ where
\[
\mathcal{E}=\ko_{\Mg}^{\oplus d_i} \text{ and } \mathcal{F}= \ko_{\Mg}(0,\ldots,\underset{(i)}{1},\ldots, 0)\oplus\ko_{\Mg}(1,\ldots,1).
\]
The total Chern classes in $A[G]$ of these vector bundles are $C(\mathcal{E})=1$ and 
\[\begin{array}{c}
\displaystyle C(\mathcal{F}) = C( \ko_{\Mg}(0,\ldots,\underset{(i)}{1},\ldots, 0))\cdot C(\ko_{\Mg}(1,\ldots,1))=(1+x_i)\left(1+\sum_{u=1}^k x_u\right)=\\ \displaystyle
1+ x_i+\sum_{u=1}^k x_u +x_i\left(\sum_{u=1}^k x_u\right).
\end{array}
\]
Using Porteous' formula (\cite[Theorem 12.4]{eisenbud20163264}) we get that 
\[
\left[\mathcal{M}^{(i,l)}\right]=\Delta_1^{d_{i,l}-1}\left( C(\mathcal{F})\right),
\]
where $\Delta_1^{d_{i,l}-1}\left( C(\mathcal{F})\right)$ is the determinant of $\mathcal N_{d_{i,j}-1}$ (see Equation \eqref{eq:matrix}) for $a=x_i$ and $b=\sum_{u\in[k]} x_u $. Then, by Lemma \ref{lemma:matrix}, we conclude that 
\[
\left[\mathcal{M}^{(i,l)}\right]=\displaystyle \sum_{i=0}^{d_{i,l}-1} x_i^i\left(\sum_{u=1}^k x_u\right)^{d_{i,l}-1-i}.
\]
Assume now that the player $(i,l)$ corresponds to an isolated vertex. We denote this vertex by $i$. Then, the matrix $M^{(i)}$ gives a map between the vector bundles $\mathcal{E}\rightarrow\mathcal{F}$ where
\[
\mathcal{E}=\ko_{\Mg}^{\oplus d_i} \text{ and } \mathcal{F}= \ko_{\Mg}\oplus\ko_{\Mg}(1,\ldots,\underset{(i)}{0},\ldots,1).
\]
The total Chern classes in $A[G]$ of these vector bundles are $C(\mathcal{E})=1$ and 
\[
C(\mathcal{F}) = C( \ko_{\Mg})\cdot C(\ko_{\Mg}(1,\ldots,\underset{(i)}{0},\ldots,1))= \displaystyle
1+\widehat{x_i}.
\]

\noindent Using Porteous' formula (\cite[Theorem 12.4]{eisenbud20163264}) we get that 
\[
\left[\mathcal{M}^{(i)}\right]=\Delta_1^{d_{i,l}-1}\left( C(\mathcal{F})\right),
\]
where $\Delta_1^{d_{i,l}-1}\left( C(\mathcal{F})\right)$ is the determinant of the $(d_i-1)\times (d_i-1)$--matrix
\[
\begin{pmatrix}
    \widehat{x_i}& 0 &\cdots & 0&0\\
    1 & \widehat{x_i}& \cdots &0&0\\
    \vdots & \vdots  & \ddots & \vdots &\vdots\\
    0 & 0 &\cdots &\widehat{x_i}&0 \\
    0&0&\cdots &1 & \widehat{x_i}
    
\end{pmatrix}.
\]
We deduce that 
\[
\left[\mathcal{M}^{(i)}\right]=\displaystyle\widehat{x_i}^{d_i-1}.
\] 
Since $\mathcal{M}^{(1,1)},\ldots,\mathcal{M}^{(k,n_k)}$ intersect in $\Vc$ with the expected codimension, we conclude that $\left[\Vc\right]$ is the polynomial in the statement.
\end{proof}

\noindent A first consequence of Theorem \ref{theo: class of Nash} is the computation of the degree of Nash CI varieties for generic games.

\begin{proposition}\label{prop:degree Nash}
    Let $\mathcal S$ be the set of isolated vertices of the graph $G$. Then, for a generic game $X$, the degree of the Nash CI variety $\Vc$ is the coefficient of the monomial $$x_1^{D_1-1}\cdots x_k^{D_k-1}$$
    in the polynomial 
    \[
    \displaystyle
    \left(
    \sum_{u=1}^kx_u
    \right)^{n-k+\sum_{i=1}^k(D_i-S_i)}
    \prod_{i\in\mathcal S}\widehat{x_i}^{d_i-1} \,\prod_{i\not\in\mathcal S}\prod_{j=1}^{n_i}\left(    
        \sum_{l=0}^{d_{i,j}-1} x_i^l\left(\sum_{u=1}^kx_u\right)^{d_{i,j}-1-l}
        \right)
    \]
\end{proposition}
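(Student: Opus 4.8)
The plan is to read off the degree from the Chow class $[\Vc]$ computed in Theorem~\ref{theo: class of Nash} by intersecting with the appropriate power of the hyperplane class. The degree is measured with respect to the embedding $\Mg\hookrightarrow\mathbb{P}^{D_1\cdots D_k-1}$ given by the Segre map, so the relevant polarization is the restriction to $\Mg$ of the hyperplane class of $\mathbb{P}^{D_1\cdots D_k-1}$. This restriction is the class of $\mathcal{O}_{\Mg}(1,\ldots,1)$, whose first Chern class in $A[G]$ is $h:=x_1+\cdots+x_k$ (not any individual $x_i$). For a $d$--dimensional subvariety $Z\subseteq\Mg$ the degree is then the intersection number $\deg Z=\int_{\Mg}[Z]\cdot h^d$.

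Next I would substitute the two known ingredients. Assuming $\Vc$ is nonempty for generic $X$ (otherwise $[\Vc]=0$ and both sides of the claimed formula vanish), Corollary~\ref{cor:dim Nash} gives $d:=\dim\Vc=n-k+\sum_{i=1}^k(D_i-S_i)$, and Theorem~\ref{theo: class of Nash} gives $[\Vc]$ as the polynomial \eqref{eq:poly chow}. Consequently $[\Vc]\cdot h^d$ is exactly the product appearing in the statement, namely $\left(\sum_{u=1}^k x_u\right)^{d}$ times the class \eqref{eq:poly chow}.

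It then remains to apply the degree (pushforward to a point) map to this product. The product is homogeneous of degree $\codim_{\Mg}\Vc+d=\left(\sum_i S_i-n\right)+\left(n-k+\sum_i(D_i-S_i)\right)=\sum_i(D_i-1)=\dim\Mg$, so it lands in the top-degree piece of $A[G]$. By the K\"unneth presentation \eqref{eq:chow segre}, every monomial $x_1^{a_1}\cdots x_k^{a_k}$ of this degree is killed by the relations $x_i^{D_i}=0$ except $x_1^{D_1-1}\cdots x_k^{D_k-1}$: the constraints $\sum_i a_i=\sum_i(D_i-1)$ and $a_i\le D_i-1$ together force $a_i=D_i-1$ for every $i$. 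Hence the image in $A[G]$ equals the coefficient of $x_1^{D_1-1}\cdots x_k^{D_k-1}$ times the class of a point, and the degree map returns precisely that coefficient, which can be read off directly from the unreduced polynomial. This is the asserted formula. I do not expect a genuine obstacle: once Theorem~\ref{theo: class of Nash} is available, the argument is a direct application of intersection theory, with the only care needed in (i) identifying the Segre polarization $h=x_1+\cdots+x_k$ and (ii) verifying that the exponent $d$ places the product in top degree so that only the point class survives.
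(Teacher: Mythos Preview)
Your proposal is correct and follows essentially the same approach as the paper's proof: intersect the class $[\Vc]$ from Theorem~\ref{theo: class of Nash} with $h^{\dim\Vc}$ where $h=\sum_u x_u$, and read off the coefficient of the point class $x_1^{D_1-1}\cdots x_k^{D_k-1}$. The paper's proof states this method in two sentences and cites Theorem~\ref{theo: class of Nash} and Corollary~\ref{cor:dim Nash}; your version simply spells out the justification (identifying the Segre polarization, checking the degree count lands in top degree, and noting which monomials survive the relations) that the paper leaves implicit.
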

\begin{proof}
    The degree of a $d$-dimensional variety $Z$ in $\Mg$ is given by the coefficient of 
    \[
    x_1^{D_1-1}\cdots x_k^{D_k-1}
    \]
    in the polynomial corresponding to the class
    \[
    \left[Z\right]\left(\sum_{u=1}^kx_u\right)^d.
    \]
    The proof follows by applying this method for $\Vc$ using Theorem \ref{theo: class of Nash} and Corollary \ref{cor:dim Nash}.
\end{proof}

\begin{remark}
    For binary games, the class of $\Vc$ in $A[G]$ for generic games is 
    \[
    \left[\Vc\right] = \displaystyle  \prod_{i\in\mathcal S}  \widehat{x_i} \,\prod_{i\not\in\mathcal S}\prod_{j=1}^{n_i}\left( x_i+\sum_{u=1}^kx_u
        \right),
    \]
    which coincides with the calculation of this class carried out in \cite{portakal2024game}. In particular, Proposition \ref{prop:degree Nash} coincides with \cite[Proposition 26]{portakal2024game} for binary cases. Similarly, when $G$ is complete, the class of $\Vc$ is 
    \[
    \left[ \Vc\right] = \prod_{j=1}^{n}\left(    
        \sum_{l=0}^{d_{i}-1} x_1^lx_1^{d_{i}-1-l}\right)=\prod_{j=1}^{n}\left(    
        d_i x_1^{d_{i}-1}
        \right)=\left(\prod_{j=1}^{n}    
        d_i \right)x_1^{d_{1}+\cdots+d_n-n}.   
    \]
    Therefore, we obtain that the degree of a generic Nash CI variety of a complete graph is $d_1\cdots d_n$. This coincides with \cite[Theorem 6]{portakal2022geometry}.
\end{remark}

\begin{example}\label{example:Nash3play}
    Consider a generic $d_1 \times d_2 \times d_3$ and let $G$ be a graph on $3$ vertices with one edge between the vertices $2$ and $3$. By Corollary \ref{cor:dim Nash}, if $\Vc$ is nonempty, then its dimension is
    \[
\dim \Vc = d_2d_3-d_2-d_3+1.
    \]
    In this setting, the Chow ring of $\Mg$ is 
    \[A[G]=\Z[x_1,x_2]/\langle x_1^{d_1},x_2^{d_2d_3}\rangle .
    \]
    By Theorem \ref{theo: class of Nash}, the class of $\Vc$ in $A[G]$ is
    \[
    \left[ \Vc\right] = x_2^{d_1-1}\left(    
        \sum_{l=0}^{d_{2}-1} x_2^l\left(x_1+x_2\right)^{d_{2}-1-l}\right)
        \left(\sum_{l=0}^{d_{3}-1} x_2^l\left(x_1+x_2\right)^{d_{3}-1-l}\right).
    \]
    By Proposition \ref{prop:degree Nash}, the degree of $\Vc$ is the coefficient of $x_1^{d_1-1}x_2^{d_2d_3-1}$ in the polynomial 
    \[
        (x_1+x_2)^{d_2d_3-d_2-d_3+1}x_2^{d_1-1}\left(    
        \sum_{l=0}^{d_{2}-1} x_2^l\left(x_1+x_2\right)^{d_{2}-1-l}\right)
        \left(\sum_{l=0}^{d_{3}-1} x_2^l\left(x_1+x_2\right)^{d_{3}-1-l}\right).
    \]
   We deduce the following closed formula (some of whose instances are computed in Table \ref{table:degrees and dims}):
    \begin{equation}\label{eq:explicit degree}
    \binom{d_2d_3+1}{d_1+1}-\binom{d_2d_3-d_2+1}{d_1+1}-\binom{d_2d_3-d_3+1}{d_1+1}+\binom{d_2d_3-d_2-d_3+1}{d_1+1}.
    \end{equation}

\noindent In certain cases, the Nash CI variety turns out to be empty, which specifically when $d_1 > d_2 d_3$ (see Example~\ref{ex: CI beats Nash}). Such a condition is equivalent to the vanishing of \eqref{eq:explicit degree}.
In the next section, we establish the necessary and sufficient conditions for this emptiness to occur in generic games, treating the problem in full generality.

\begin{table}
\begin{tabular}{|c|c|c|c|c|}
\hline
$d_1$ & $d_2$ & $d_3$ & Dimension & Degree \\ \hline
2 & 2 & 2 & 1 & 8 \\ \hline
2 & 2 & 3 & 2 & 21 \\ \hline
3 & 2 & 2 & 1 & 5 \\ \hline
2 & 3 & 3 & 4 & 54 \\ \hline
3 & 2 & 3 & 2 & 29 \\ \hline
3 & 3 & 3 & 4 & 141 \\ \hline
2 & 2 & 4 & 3 & 40 \\ \hline
4 & 2 & 2 & 1 & 1 \\ \hline
2 & 3 & 4 & 6 & 102 \\ \hline
3 & 2 & 4 & 3 & 86 \\ \hline
4 & 2 & 3 & 2 & 20 \\ \hline
2 & 4 & 4 & 9 & 192 \\ \hline
5 & 2 & 2 & empty & empty \\ \hline
2 & 2 & 5 & 4 & 65 \\ \hline
2 & 3 & 5 & 8 & 165 \\ \hline
7 & 2 & 3 & empty & empty \\ \hline
\end{tabular}
\caption{Dimensions and degrees of Nash CI varieties for generic $d_1 \times d_2 \times d_3$ games where the graph's only edge connects players $2$ and $3$.}
\label{table:degrees and dims}
\end{table}
\end{example}
\section{Emptiness of generic Nash CI varieties}\label{sec:Emptyness}

In this section, we provide a characterization of the emptiness of generic Nash CI varieties. The strategy is to use Theorem \ref{theo: class of Nash} and show when the class \eqref{eq:poly chow} vanishes in the Chow ring $A[G]$. To make it so, we first derive several technical lemmas.

\begin{lemma}\label{lemma:product}
    The monomials of the product 
    \begin{equation}\label{eq:prod chow poly}
    \prod_{i\in\mathcal S}\widehat{x_i}^{d_i-1}
    \end{equation}
    are all the monomials $x^\alpha$ of degree $\displaystyle\sum_{i\in\mathcal S}(d_i-1)$ with $\displaystyle\alpha_i\leq \sum_{j\in\mathcal S\setminus\{i\}}(d_j-1)$ for all $i\in\mathcal S$.
\end{lemma}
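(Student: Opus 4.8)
The plan is to establish the two inclusions separately: every monomial occurring in \eqref{eq:prod chow poly} satisfies the stated conditions (necessity), and conversely every monomial satisfying them occurs with nonzero coefficient (sufficiency).

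Necessity is immediate. The product $\prod_{i\in\mathcal S}\widehat{x_i}^{\,d_i-1}$ is homogeneous of degree $\sum_{i\in\mathcal S}(d_i-1)$, so every monomial in its expansion has exactly this total degree. Fixing $i\in\mathcal S$ and recalling that $\widehat{x_i}=\sum_{u\neq i}x_u$ does not involve $x_i$, the variable $x_i$ can only be supplied by the factors $\widehat{x_j}^{\,d_j-1}$ with $j\in\mathcal S\setminus\{i\}$, whose combined degree is $\sum_{j\in\mathcal S\setminus\{i\}}(d_j-1)$; hence $\alpha_i\leq\sum_{j\in\mathcal S\setminus\{i\}}(d_j-1)$.

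Sufficiency is the main content. Since each $\widehat{x_i}$ has $0/1$ coefficients, the whole product has nonnegative coefficients, so $x^\alpha$ appears with nonzero coefficient if and only if one can choose, from each of the $d_i-1$ copies of $\widehat{x_i}$, a single variable $x_u$ with $u\neq i$ so that globally $x_u$ is chosen exactly $\alpha_u$ times. I would record such a choice by a nonnegative integer matrix $(c_{u,i})_{u\in[k],\,i\in\mathcal S}$, where $c_{u,i}$ counts the copies of $\widehat{x_i}$ that contribute $x_u$; the requirement becomes the transportation-type system $\sum_{i\in\mathcal S}c_{u,i}=\alpha_u$ for all $u$, $\sum_{u\in[k]}c_{u,i}=d_i-1$ for all $i\in\mathcal S$, together with the forbidden diagonal $c_{i,i}=0$. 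The task is thus to show that this transportation problem with forbidden cells is feasible exactly under the stated hypotheses.

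I would settle feasibility via an integer max-flow model: a source feeding a node for each variable $u$ with capacity $\alpha_u$, an edge $u\to i$ of capacity $+\infty$ when $u\neq i$ and $0$ when $u=i$, and an edge from each group node $i$ to a sink of capacity $d_i-1$. A valid assignment exists precisely when the maximum flow attains $N:=\sum_{i\in\mathcal S}(d_i-1)$, and integrality of the optimum is automatic. The main obstacle is ruling out subtler obstructions beyond the single-index inequalities, and this is exactly what the min-cut enumeration accomplishes: a finite cut cannot sever an infinite middle edge, which forces its source side into one of very few shapes, and a short case analysis shows that every finite cut has capacity $\geq N$ except possibly those isolating one group $i_0\in\mathcal S$ on the sink side while keeping the variable $x_{i_0}$ on the source side. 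Such a cut has capacity $N-\alpha_{i_0}+\sum_{j\in\mathcal S\setminus\{i_0\}}(d_j-1)$, which is $\geq N$ if and only if $\alpha_{i_0}\leq\sum_{j\in\mathcal S\setminus\{i_0\}}(d_j-1)$. Hence the minimum cut equals $N$ under the hypotheses, the required matrix $(c_{u,i})$ exists, and $x^\alpha$ occurs. A defect-Hall or Gale--Ryser argument would give the same conclusion; the flow formulation is simply the most transparent bookkeeping.
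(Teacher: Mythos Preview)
Your proof is correct and takes a genuinely different route from the paper's. The paper argues by induction on $|\mathcal S|$: writing the product as $\widehat{x_s}^{\,d_s-1}\prod_{i<s}\widehat{x_i}^{\,d_i-1}$, it constructs for a given $\alpha$ an explicit decomposition $\alpha=\tilde\alpha+\overline\alpha$ by first defining an auxiliary vector $\beta_i=\min\{\alpha_i,\sum_{j\in[s-1]\setminus\{i\}}(d_j-1)\}$ and then doing a three-case analysis on the set $\{i:\beta_i<\alpha_i\}$ to show $|\beta|\geq\sum_{i<s}(d_i-1)$, from which the desired splitting follows. Your approach instead recognizes the sufficiency direction as a transportation problem with a forbidden diagonal and solves it in one shot by max-flow/min-cut: the enumeration of finite cuts reduces exactly to the single-index inequalities $\alpha_{i_0}\leq\sum_{j\in\mathcal S\setminus\{i_0\}}(d_j-1)$. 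Your method is more conceptual and immediately explains why no higher-order obstructions appear, and it generalizes cleanly to related monomial-support questions; the paper's induction is more elementary and self-contained, requiring no appeal to network-flow machinery. One small remark: your phrase ``a short case analysis'' is doing real work---the reader should be told that a finite cut forces either all group nodes on the source side (capacity $\geq N$), or all variable nodes on the sink side (capacity $\geq N$), or exactly one group $i_0$ on the sink side with only the variable $x_{i_0}$ on the source side---but the computation you give for that last case is exactly right.
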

\begin{proof}
    Without loss of generality, we may assume that $\mathcal S=[s]$ for $s=|\mathcal S|$. We argue by induction on $s$. For $s=1$, \eqref{eq:prod chow poly} equals $\left(\widehat{x_1}\right)^{d_1-1}$ and the statement holds. Assume now that $s\geq 2$. We write the product \eqref{eq:prod chow poly}  as 
    \begin{equation}
        \label{eq: div prod}
         \left(\widehat{x_s}\right)^{d_s-1}\prod_{i=1}^{s-1}\left(\widehat{x_i}\right)^{d_i-1}.
    \end{equation}
    The monomials of the first factor of \eqref{eq: div prod} are all monomials $x^{\tilde{\alpha}}$ of degree $d_s-1$ with $\tilde{\alpha}_s=0$. 
    By induction, the monomials of the second factor of \eqref{eq: div prod} are all monomials $x^{\overline{\alpha}}$ with 
    \begin{equation}\label{eq:mono cond}
    |\overline{\alpha}|=\sum_{i=1}^{s-1}(d_i-1)~~ \text{ and } ~~\overline{\alpha}_i\leq \sum_{j\in[s-1]\setminus\{i\}}(d_j-1) ~~\text{ for every } i\in[s-1]. 
    \end{equation}
    Let $\alpha$ as in the statement. We need to show that every $\alpha$ as in the statement is the sum of $\tilde{\alpha}+\overline{\alpha}$ as above.  For $i\in [k]$, consider the integer vector $\beta$ such that 
    \[
    \beta_i=\left\{
    \begin{array}{ll}
    \displaystyle\min\bigg\{~\alpha_i~,~\sum_{j\in[s-1]\setminus\{i\}}(d_j-1)\bigg\} & \text{ for } i\in[s-1],\\
    \alpha_i & \text{ for } i\geq s.
    \end{array}
    \right.
    \]
    Next, we show that $\displaystyle|\beta|:=\sum_{i=1}^k\beta_i\geq \sum_{i=1}^{s-1}(d_i-1)$. Consider the subset $\mathcal A=\{i\in[s-1]:\beta_i<\alpha_i\}$.
    
    We distinguish $3$ cases:
    \begin{itemize}
        \item If $\mathcal A=\emptyset$, then $$|\beta|=|\alpha|=\sum_{i=1}^s(d_i-1)\geq\sum_{i=1}^{s-1}(d_i-1).$$ 
        \item If $|\mathcal A|=\{i\}$, then 
        \[|\beta|=\sum_{j\in[s-1]\setminus\{i\}}(d_j-1) +\sum_{j\in[k]\setminus\{i\}}\alpha_j = 
        \sum_{j=1}^{s-1}(d_j-1) + \sum_{j=1}^s(d_j-1)-\alpha_i -d_i+1\geq \sum_{j=1}^{s-1}(d_j-1).
        \]
        Here we have used that $\displaystyle|\alpha|=\sum_{j=1}^s(d_j-1)$ then $\displaystyle\alpha_i\leq \sum_{j\in[s]\setminus\{i\}}(d_j-1)$.
        \item If there exists $i_1,i_2\in\mathcal A$ distinct, then
        \[|\beta|\geq \beta_{i_1}+\beta_{i_2} = \sum_{j\in[s-1]\setminus\{i_1\}}(d_j-1) +  \sum_{j\in[s-1]\setminus\{i_2\}}(d_j-1) \geq \sum_{j=1}^{s-1}(d_j-1).
        \]
    \end{itemize}
    We deduce that $|\beta|\geq \sum_{i=1}^{s-1}(d_i-1)$. Therefore, there exists a choice of $0\leq \overline{\alpha}_i\leq \beta_i$ for $i\in[k]$ such that $\overline{\alpha}_s = \beta_s=\alpha_s$ and $|\overline{\alpha}|=\sum_{i=1}^{s-1}(d_i-1)$. Consider also the integer vector $\tilde{\alpha}:=\alpha-\overline{\alpha}$. By construction $\overline{\alpha}$ satisfies \eqref{eq:mono cond} and $|\tilde{\alpha}|=d_s-1$ with $0\leq\tilde{\alpha}_i$. Therefore, $\alpha=\overline{\alpha}+\tilde{\alpha}$ is the desired decomposition.
\end{proof}

\begin{lemma}\label{lemma: monomials}
    The nonzero monomials that appear in the polynomial \eqref{eq:poly chow} are all the monomials $x_1^{\alpha_1}\cdots x_k^{\alpha_k}$ of degree $\displaystyle\sum_{l=1}^k S_l-n$ such that for $i\in\mathcal S$, $\displaystyle\alpha_i\leq \sum_{l=1}^kS_l -n+1-d_i$.
\end{lemma}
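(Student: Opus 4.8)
The plan is to exploit the factorization of \eqref{eq:poly chow} together with the fact that it has nonnegative integer coefficients, so that no cancellation occurs and its support is determined purely combinatorially. Write \eqref{eq:poly chow} as $P = I\cdot R$, where
\[
I:=\prod_{i\in\mathcal S}\widehat{x_i}^{\,d_i-1},\qquad R:=\prod_{i\notin\mathcal S}\prod_{j=1}^{n_i}Q_{i,j},\qquad Q_{i,j}:=\sum_{l=0}^{d_{i,j}-1}x_i^{\,l}\,\Sigma^{\,d_{i,j}-1-l},
\]
and $\Sigma:=\sum_{u=1}^k x_u$. Call a homogeneous polynomial \emph{full} if every monomial of its degree occurs with positive coefficient. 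I would first note that each $Q_{i,j}$ is full: its $l=0$ summand is $\Sigma^{d_{i,j}-1}$, which already contains every monomial of degree $d_{i,j}-1$ with positive coefficient, while the remaining summands have nonnegative coefficients. Since a product of full polynomials with nonnegative coefficients is again full—any exponent vector of the total degree splits into a nonnegative sum, and no cancellation can occur—the factor $R$ is full of degree $\deg R=\sum_{i\notin\mathcal S}(S_i-n_i)$.

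Because both $I$ and $R$ have nonnegative coefficients, a monomial $x^\alpha$ occurs in $P$ if and only if $\alpha=\alpha'+\beta$ for some $\alpha'\in\mathrm{supp}(I)$ and some $\beta\ge0$ with $|\beta|=\deg R$, the fullness of $R$ guaranteeing that any such $\beta$ is admissible. Describing $\mathrm{supp}(I)$ by Lemma~\ref{lemma:product} turns this into a feasibility question. Writing $M_i:=\sum_{j\in\mathcal S\setminus\{i\}}(d_j-1)=\deg I-(d_i-1)$, and setting $U_i:=\min(\alpha_i,M_i)$ for $i\in\mathcal S$ and $U_u:=\alpha_u$ otherwise, the decomposition exists exactly when some integer vector $\alpha'$ with $0\le\alpha'_u\le U_u$ has $|\alpha'|=\deg I$. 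As each coordinate ranges over an integer interval, this holds if and only if $\deg I\le\sum_u U_u$. Using $\sum_{i\in\mathcal S}\min(\alpha_i,M_i)=\sum_{i\in\mathcal S}\alpha_i-\sum_{i\in\mathcal S}(\alpha_i-M_i)^+$ with $(t)^+:=\max(0,t)$, and $|\alpha|=N:=\sum_{l}S_l-n$ (which equals $\deg I+\deg R$ since $d_i-1=S_i-n_i$ for isolated vertices), this reduces to the single condition
\[
\sum_{i\in\mathcal S}(\alpha_i-M_i)^+\ \le\ \deg R,
\]
which I will call $(\ast)$.

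It remains to show that $(\ast)$ is equivalent to the stated inequalities $\alpha_i\le N+1-d_i$ for $i\in\mathcal S$; here the key identity is $M_i+\deg R=N+1-d_i$. One direction is immediate, since a single summand is bounded by the whole sum: $(\ast)$ gives $(\alpha_i-M_i)^+\le\deg R$, i.e. $\alpha_i\le M_i+\deg R=N+1-d_i$. For the converse I set $\mathcal B:=\{i\in\mathcal S:\alpha_i>M_i\}$, so that the left-hand side of $(\ast)$ equals $\sum_{i\in\mathcal B}(\alpha_i-M_i)$. When $|\mathcal B|\le 1$ this is exactly the single hypothesis (and is $0$ if $\mathcal B=\varnothing$). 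When $|\mathcal B|\ge 2$, I bound $\sum_{i\in\mathcal B}\alpha_i\le|\alpha|=N$ and, using $\sum_{i\in\mathcal B}(d_i-1)\le\deg I$, estimate $\sum_{i\in\mathcal B}M_i\ge(|\mathcal B|-1)\deg I\ge\deg I$; hence $\sum_{i\in\mathcal B}(\alpha_i-M_i)\le N-\deg I=\deg R$, so $(\ast)$ holds automatically from $|\alpha|=N$, without even invoking the hypothesis.

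The genuinely delicate point is precisely this converse: each individual bound $\alpha_i\le N+1-d_i$ controls only one summand of $(\ast)$, and a priori several summands could be positive at once, so it is not formally obvious that the pointwise bounds force the bound on the sum. The resolution is the case split on $|\mathcal B|$ above: the inequality $\sum_{i\in\mathcal B}(d_i-1)\le\deg I$ shows that two or more ``overflowing'' coordinates would already exhaust the degree budget $N$, so the only potentially binding situation is $|\mathcal B|=1$, where the pointwise hypothesis is exactly what is needed. Finally I would dispose of the degenerate cases $\mathcal S=\varnothing$ (then $P=R$ is full and the conditions are vacuous) and $\deg I=0$ or $R$ an empty product, each of which is routine and recovers Lemma~\ref{lemma:product} or the fullness statement directly.
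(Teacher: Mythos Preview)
Your proof is correct and follows the same decomposition $P=I\cdot R$ as the paper, invoking Lemma~\ref{lemma:product} for $I$ and showing $R$ is full via its $l=0$ summand. Where the paper simply writes ``We conclude,'' you supply the combinatorial verification that the Minkowski sum of $\mathrm{supp}(I)$ with the full simplex of degree $\deg R$ is precisely the set described; your case split on $|\mathcal B|$ is the clean way to handle the non-obvious backward direction, and this makes your argument more complete than the paper's own.
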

\begin{proof}
First, we look at the monomials that appear in each of the factors of the polynomial \eqref{eq:poly chow}. By Lemma \ref{lemma:product}, the nonzero monomials of the factor \eqref{eq:prod chow poly} are all the monomials $x^\alpha$ of degree $\sum_{i\in\mathcal S}(d_i-1)$ with $\alpha_i\leq \sum_{j\in\mathcal S}(d_j-1)-(d_i-1)$.
Now, the nonzero monomials of the factor 
\[ \displaystyle
        \sum_{l=0}^{d_{i,j}-1} x_i^l\left(\sum_{u=1}^kx_u\right)^{d_{i,j}-1-l}
        \]
are all monomials of degree $d_{i,j}$. In particular, the nonzero monomials of the product 
\[
 \displaystyle\prod_{i\not\in\mathcal S}\prod_{j=1}^{n_i}\left(    
        \sum_{l=0}^{d_{i,j}-1} x_i^l\left(\sum_{u=1}^kx_u\right)^{d_{i,j}-1-l}
        \right)
\]
are all monomials of degree $\sum_{i\not\in\mathcal S}(S_i-n_i)$. We conclude that the nonzero monomials of the polynomial \eqref{eq:poly chow} are all monomials of degree $\sum_{i\in[k]}S_i -n$ whose degree in $x_i$ for $i\in\mathcal S$ is smaller than or equal to
\[\sum_{i=1}^kS_i -n-d_i+1.\qedhere\]
\end{proof}

\begin{lemma}\label{lemma: inequality ds}
    For every $i\in[k]$, it holds that $S_i-n_i\le D_i-1$.
\end{lemma}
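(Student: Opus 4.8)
The plan is to reduce the claimed inequality $S_i - n_i \le D_i - 1$ to an elementary statement asserting that a product of numbers at least $1$ dominates their sum. Writing out the definitions of $S_i$ and $D_i$, the inequality reads
\[
\sum_{j=1}^{n_i} d_{i,j} - n_i \le \prod_{j=1}^{n_i} d_{i,j} - 1,
\]
and since $n_i = \sum_{j=1}^{n_i} 1$, this is equivalent to
\[
\sum_{j=1}^{n_i} (d_{i,j} - 1) \le \prod_{j=1}^{n_i} d_{i,j} - 1.
\]
Setting $e_j := d_{i,j} - 1 \ge 0$ (each player has at least one strategy, so $d_{i,j} \ge 1$), the target becomes $\sum_j e_j \le \prod_j (1 + e_j) - 1$.

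First I would expand the product on the right using the subset expansion
\[
\prod_{j=1}^{n_i}(1 + e_j) = \sum_{T \subseteq [n_i]} \prod_{j \in T} e_j.
\]
The empty set contributes $1$, the singletons contribute exactly $\sum_j e_j$, and every remaining term, indexed by a subset $T$ with $|T| \ge 2$, is a product of non-negative reals and hence non-negative. Therefore $\prod_j (1 + e_j) \ge 1 + \sum_j e_j$, which is precisely the desired inequality after substituting back $e_j = d_{i,j} - 1$ and rearranging; this yields $D_i - 1 \ge S_i - n_i$.

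Alternatively, one can argue by induction on $n_i$: the base case $n_i = 1$ gives the equality $S_i - n_i = d_{i,1} - 1 = D_i - 1$, and in the inductive step appending a factor $d = d_{i,n_i} \ge 1$ reduces the claim to $(D' - 1)(d - 1) \ge 0$, where $D'$ is the product over the first $n_i - 1$ players, which is clearly non-negative. I expect no genuine obstacle here: the statement is a routine consequence of the off-diagonal terms in the expansion being non-negative, and the only additional point worth recording is that equality holds exactly when at most one $d_{i,j}$ exceeds $1$ (equivalently, when the clique $G_i$ consists of a single non-trivial player together with trivial ones), which is the case where all higher-order terms $\prod_{j \in T} e_j$ with $|T| \ge 2$ vanish.
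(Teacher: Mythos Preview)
Your proof is correct. Your primary argument via the subset expansion $\prod_j(1+e_j) = \sum_{T\subseteq[n_i]}\prod_{j\in T}e_j$ is a clean one-line route that the paper does not take; the paper instead proceeds by induction on $n_i$, which is precisely the alternative argument you sketch (splitting off the last factor and reducing to $(D'-1)(d-1)\ge 0$). Both arguments are elementary and of comparable length; the subset expansion has the mild advantage of making the equality case transparent, as you note, while the induction is perhaps more in keeping with the inductive style used elsewhere in the section.
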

\begin{proof}
We use induction on $n_i$. For $n_i=1$, then $S_i-n_i=d_i-1=D_i-1$. Now, assume that $n_i>1$. The inequality $S_i-n_i\le D_i-1$ is equivalent to the inequality
    $$\displaystyle1+S_i-n_i=1+\sum_{j=1}^{n_i}(d_{i,j}-1)\le\prod_{j=1}^{n_i}d_{i,j}=D_i.$$
    Then, by induction we get that 
    \[
    \begin{array}{c}
    \displaystyle 1+(d_{i,n_i}-1)+\sum_{j=1}^{n_{i}-1}(d_{i,j}-1)\leq 
    1+(d_{i,n_i}-1)+\sum_{j=1}^{n_i-1}(d_{i,j}-1)+(d_{i,n_i}-1)\left(\sum_{j=1}^{n_i-1}(d_{i,j}-1)\right)\\ \displaystyle
   = d_{i,n_i}\left(\sum_{j=1}^{n_i-1}(d_{i,j}-1)+1\right)\leq d_{i,n_i}\prod_{j=1}^{n_i-1}d_{i,j}=D_i.
    \end{array}\]
\end{proof}

\begin{theorem}\label{theo:emptyness}
    Let $\mathcal S$ be the set of isolated vertices of $G$. Then, the Nash CI variety is nonempty for generic games if and only if
    \begin{equation}\label{eq: final nonempty}
    d_i\leq1+\frac12 \sum_{l=1}^k(D_l-1)
    \end{equation}
    for every $i\in\mathcal S$.
\end{theorem}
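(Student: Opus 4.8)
The plan is to translate the emptiness question into the vanishing of the Chow class computed in Theorem~\ref{theo: class of Nash}, and then into a purely combinatorial feasibility problem about monomials surviving in $A[G]$. First I would record the equivalence that, for a generic game $X$, the Nash CI variety $\Vc$ is nonempty if and only if the polynomial \eqref{eq:poly chow} is nonzero in $A[G]=\Z[x_1,\ldots,x_k]/\langle x_1^{D_1},\ldots,x_k^{D_k}\rangle$. This follows from Theorem~\ref{theo:dimension} (which forces $\Vc$ to be either empty or of the expected codimension $N:=\sum_{l=1}^kS_l-n$) together with Theorem~\ref{theo: class of Nash}: a nonempty subvariety of $\Mg$ has a nonzero effective class, while an empty one has class $0$. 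A monomial $x^\alpha$ is nonzero in $A[G]$ exactly when $\alpha_i\le D_i-1$ for every $i$, so the task becomes deciding whether \eqref{eq:poly chow} contains a monomial that survives this reduction.

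Next I would use Lemma~\ref{lemma: monomials} to describe the monomials of \eqref{eq:poly chow}: they are precisely the $x^\alpha$ with $|\alpha|=N$ and $\alpha_i\le N+1-d_i$ for $i\in\mathcal S$. Combining this with the survival constraint $\alpha_i\le D_i-1$ (and using $D_i=d_i$ for isolated vertices), the nonvanishing of the class becomes the existence of an integer vector $\alpha\ge 0$ with $|\alpha|=N$ and $\alpha_i\le u_i$, where $u_i=\min(N+1-d_i,d_i-1)$ for $i\in\mathcal S$ and $u_i=D_i-1$ otherwise. Each $u_i$ is nonnegative (for isolated $i$ because $N\ge d_i-1$), so such an $\alpha$ exists if and only if $\sum_i u_i\ge N$. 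Writing $\Delta:=\sum_{l}(D_l-1)-N$, which is nonnegative by Lemma~\ref{lemma: inequality ds} and vanishes on the isolated coordinates, a short computation using $\min(N+1-d_i,d_i-1)-(d_i-1)=-\max(2d_i-N-2,0)$ would give
\[
\sum_i u_i-N=\Delta-\sum_{i\in\mathcal S}\max(2d_i-N-2,\,0).
\]

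The crux of the argument, and the step I expect to carry the real content, is that the per-vertex inequality \eqref{eq: final nonempty} is equivalent to the single inequality $\sum_i u_i\ge N$. The bridge is the observation that at most one isolated vertex can violate its individual bound: if $2d_i-N-2>0$ and $2d_j-N-2>0$ for distinct $i,j\in\mathcal S$, then $d_i+d_j>N+2$, contradicting $N\ge(d_i-1)+(d_j-1)$ (each isolated vertex contributes $d-1$ to $N$). Hence the sum $\sum_{i\in\mathcal S}\max(2d_i-N-2,0)$ equals its largest term, so $\sum_i u_i\ge N$ holds if and only if $2d_i-N-2\le\Delta$ for every $i\in\mathcal S$. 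Since $N+\Delta=\sum_{l}(D_l-1)$, this last condition reads $2(d_i-1)\le\sum_{l}(D_l-1)$, i.e.\ $d_i\le 1+\frac12\sum_{l}(D_l-1)$ for all $i\in\mathcal S$, which is \eqref{eq: final nonempty}. The main obstacle is thus not any heavy computation but pinning down this pigeonhole reduction cleanly; establishing the initial equivalence with the Chow class, and in particular checking that the empty case genuinely corresponds to the class vanishing, is the other point requiring care.
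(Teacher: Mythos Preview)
Your proposal is correct and follows essentially the same route as the paper: reduce nonemptiness to nonvanishing of the Chow class \eqref{eq:poly chow}, invoke Lemma~\ref{lemma: monomials} to describe the surviving monomials, use the pigeonhole observation that at most one isolated vertex can satisfy $2d_i-2>N$, and appeal to Lemma~\ref{lemma: inequality ds} for $\Delta\ge 0$. Your packaging via the single feasibility criterion $\sum_i u_i\ge N$ is a bit more economical than the paper's explicit case split and monomial constructions, but the ingredients and logical flow are the same.
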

\begin{proof}
The Nash CI variety of a generic game $X$ is nonempty if and only if the class $\left[ \Vc\right]$ is nonzero. Thus, it is enough to show that the polynomial \eqref{eq:poly chow} is nonzero if and only if \eqref{eq: final nonempty} holds.
    To do so, we distinguish two cases: either  for all $ i\in\mathcal S$, $2d_i-2\leq \sum_{l=1}^k(S_l-n_l)$, or there exists $ i\in\mathcal S$ such that $2d_i-2>\sum_{l=1}^k(S_l-n_l)$.

Assume first that $2d_i-2\leq \sum_{l=1}^k(S_l-n_l)$ for all $i\in\mathcal S$. By Lemma \ref{lemma: inequality ds},  we deduce that \eqref{eq: final nonempty} holds. Therefore, in this case, to check the equivalence it is enough to check that the class \eqref{eq:poly chow} is nonzero.
Consider the monomial $\prod_{i=1}^kx_i^{S_i-n_i}$. This monomial has degree $\sum_{l=1}^k S_l-n$ and satisfies that for all $i\in\mathcal S$  $$\alpha_i=S_i-n_i=d_i-1\leq \sum_{l=1}^kS_l -n+1-d_i.$$ 
    Hence, by Lemma \ref{lemma: monomials}, this monomial appears in the polynomial \eqref{eq:poly chow}. Moreover,  such monomial is nonzero since $S_i-n_i\le D_i-1$ by Lemma \ref{lemma: inequality ds}. We conclude that the polynomial \eqref{eq:poly chow} is nonzero.

  Assume now that there exists $a\in\mathcal S$ such that $2d_a-2>\sum_{l=1}^k(S_l-n_l)$. Without loss of generality, we assume $d_a$ to be the maximum.  Then, for all $j\in\mathcal S\setminus \{a\}$, we have that 
    \begin{equation}\label{eq: unique da}
   2d_j-2\le d_j-1+d_a-1\le\sum_{l\in\mathcal S}(d_l-1)=\sum_{l\in\mathcal S}(S_l-n_l)\le\sum_{l=1}^k(S_l-n_l).
    \end{equation}
    Now, let $x^{\alpha}=x_1^{\alpha_1}\cdots x_k^{\alpha_k}$ be a monomial of the polynomial \eqref{eq:poly chow}. Then by Lemma \ref{lemma: monomials},
    \begin{equation}
        \label{eq:degree eq}
         \alpha_1+\cdots+\alpha_k=\sum_{l=1}^k S_l-n, \text{ and }\alpha_i\leq \sum_{l=1}^k S_l-n-d_i+1 \text{ for } i\in\mathcal S.
    \end{equation}
    Note that $x^\alpha$ is nonzero if and only if $\alpha_i\leq D_i-1$ for every $i$.
    Moreover, using \eqref{eq:degree eq} we get \begin{equation}\label{eq:alphas}
    \sum_{l\not\in\mathcal S}\alpha_l=\sum_{l=1}^k S_l-n-\sum_{l\in\mathcal S}\alpha_l=\sum_{l\not\in\mathcal S} (S_l-n_l)+\sum_{l\in\mathcal S}(d_l-1)-\sum_{l\in\mathcal S}\alpha_l.
    \end{equation}
    Assume first that \eqref{eq: final nonempty} holds. Then, for $i\in\mathcal S$, we fix the $i$--th coordinate of $\alpha$ to be
    \begin{equation}
        \label{eq: some alpha}
        \alpha_i=\min\big\{d_i-1,\sum_{l=1}^k S_l-n-d_i+1\big\}=\left\{\begin{array}{ll}
             d_i-1&\text{ if }i\ne a\text{ by \eqref{eq: unique da}}\\
             \displaystyle\sum_{l=1}^k S_l-n-d_i+1&\text{ if }i=a.
        \end{array}\right.
    \end{equation}
        Then, we get that  
        \begin{equation}\label{eq: sum of alphas}
\begin{array}{lll}
  \displaystyle  \sum_{l\not\in\mathcal S}\alpha_l&  \displaystyle =\sum_{l\not\in\mathcal S} (S_l-n_l)+\sum_{l\in\mathcal S}(d_l-1)-\sum_{l\in\mathcal S}\alpha_l&\text{ by \eqref{eq:alphas}}\\
  &  \displaystyle =\sum_{l\not\in\mathcal S} (S_l-n_l)+d_a-1-\big(\sum_{l=1}^k S_l-n-d_a+1\big)&\text{ by \eqref{eq: some alpha}}\\
  &  \displaystyle =2d_a-2-\sum_{l\in\mathcal S}(d_l-1).
\end{array}
        \end{equation}
        
\noindent Using \eqref{eq: final nonempty} and \eqref{eq: sum of alphas}, we deduce that
    \begin{equation*}
    \left\{\begin{array}{ll}
    \displaystyle\sum_{l\notin\mathcal S}\alpha_l\ge2d_a-2-\sum_{l=1}^k(S_l-n_l)>0\\
        \displaystyle\sum_{l\not\in\mathcal S}\alpha_l\le\sum_{l=1}^k(D_l-1)-\sum_{l\in\mathcal S}(d_l-1)=\sum_{l\notin\mathcal S}(D_l-1).
    \end{array}\right.
    \end{equation*}
    Hence, there exists a choice of $\alpha_i$ for $i\not\in\mathcal S$ such that $0\leq\alpha_i\leq D_i-1$. Since $\alpha_i\leq d_i-1=D_i-1$ for $i\in\mathcal S$ by \eqref{eq: some alpha}, we deduce that the polynomial \eqref{eq:poly chow} is nonzero since its monomial $x^\alpha$ is a non zero monomial.

Conversely, assume that \eqref{eq: final nonempty} does not hold. Then, \begin{equation}\label{eq:equivalent to final criterion}
        \displaystyle\sum_{l=1}^k(D_l-1)<2d_a-2.
    \end{equation}
    We claim that the polynomial \eqref{eq:poly chow} vanishes. Let $x^\alpha$ be a monomial of \eqref{eq:poly chow}.
    Note that the monomial $x^\alpha$ is zero if there exists $i\in\mathcal S$ such that $\alpha_i\ge d_i$. Hence, we may assume that $\alpha_i\leq d_i-1$ for every $i\in\mathcal S$. By Lemma \ref{lemma: monomials}, we may assume that 
    \begin{equation}\label{eq:ineq alphas}
    \alpha_i\leq \min\{d_i-1,\sum_{l=1}^k S_l-n-d_i+1\}=\left\{\begin{array}{ll}
             d_i-1&\text{ if }i\ne a\text{ by \eqref{eq: unique da}}\\
             \displaystyle\sum_{l=1}^k S_l-n-d_i+1&\text{ if }i=a
        \end{array}\right.
    \end{equation}
    for all $i\in\mathcal S$. Using this inequality we deduce that 
    \begin{align*}
        \sum_{l\not\in\mathcal S}\alpha_l&=\sum_{l\not\in\mathcal S} (S_l-n_l)+\sum_{l\in\mathcal S}(d_l-1)-\sum_{l\in\mathcal S}\alpha_l&\text{ by \eqref{eq:alphas}}\\
        &\geq \sum_{l\not\in\mathcal S} (S_l-n_l)+d_a-1-\big(\sum_{l=1}^kS_l-n-d_a+1\big)&\text{ by \eqref{eq:ineq alphas}}\\
        &=2d_a-2-\sum_{l\in\mathcal S}(d_l-1)>\sum_{l\notin\mathcal S}(D_l-1).&\text{ by \eqref{eq:equivalent to final criterion}}
    \end{align*}
    By Pigeonhole Principle, there exists $i\not\in\mathcal S$ such that $\alpha_i>D_i-1$, and the monomial $x^\alpha$ vanishes. We conclude that the polynomial \eqref{eq:poly chow} is zero.
\end{proof}

\noindent Theorem \ref{theo:emptyness} generalizes the classical result about Nash equilibria explained in Remark~\ref{rem: nash emptyness}. Using Theorem \ref{theo:emptyness}, we can identify games for which there is no totally mixed Nash equilibrium (the Nash CI variety is empty for the graph with no edge), but there exists some totally mixed Nash CI equilibrium. The following example provides an instance of such a situation.

\begin{example}\label{ex: CI beats Nash}
    We remain in the setting of Example \ref{example:Nash3play}: $3$ players, player $2$ and $3$ being connected by the only edge. Using Theorem \ref{theo:emptyness}, we obtain that the Nash CI variety is nonempty for generic games if and only if $d_1\le d_2d_3$. This implies the emptiness results from Table \ref{table:degrees and dims}.
    The classical criterion for totally mixed Nash equilibria (see Remark \ref{rem: nash emptyness}) is more restrictive:
    \[
    d_1\le d_2+d_3-1\text{ and }d_2\le d_1+d_3-1\text{ and }d_3\le d_1+d_2-1.
    \]
    Hence, for certain values of $d_1,d_2,d_3$ we can find a Nash CI equilibrium whereas there is no totally mixed Nash equilibrium. For instance, for $d_1=4$ and $d_2=d_3=2$ and for a generic game, the variety corresponding to the totally mixed Nash equilibria is empty, but the Nash CI variety of the one-edge graph is a curve of degree $1$ (cf. Table \ref{table:degrees and dims}).

In this case, $\mathcal M_G$ is the Segre variety $\PP^3\times\PP^3$. By Proposition \ref{prop:eq Nash CI}, $\Vc$ is defined by $5$ equations: three of them are generic polynomials in $|\mathcal{O}_{\mathcal M_G}(0,1)|$  and the other two have bidegree $(1,2)$. The three polynomials of bidegree $(0,1)$ define a point in the second factor of the Segre variety. Substituting such point in the remaining two equations, we conclude that the Spohn CI variety of a generic game is a line in $\PP^3$. Together with the condition of lying in the probability simplex, we can write these equations as
\begin{equation*}
\left\{\begin{array}{cc}
     & \begin{pmatrix}
    X_{111}^{(1)}-X_{211}^{(1)} & X_{112}^{(1)}-X_{212}^{(1)} & X_{121}^{(1)}-X_{221}^{(1)} & X_{122}^{(1)}-X_{222}^{(1)} \\
    X_{111}^{(1)}-X_{311}^{(1)} & X_{112}^{(1)}-X_{312}^{(1)} & X_{121}^{(1)}-X_{321}^{(1)} & X_{122}^{(1)}-X_{322}^{(1)} \\
    X_{111}^{(1)}-X_{411}^{(1)} & X_{112}^{(1)}-X_{412}^{(1)} & X_{121}^{(1)}-X_{421}^{(1)} & X_{122}^{(1)}-X_{422}^{(1)} \\
    1 & 1 & 1 & 1
\end{pmatrix}\cdot\begin{pmatrix}
    \sigma_{11}^{(2)}\\
    \sigma_{12}^{(2)}\\
    \sigma_{21}^{(2)}\\
    \sigma_{22}^{(2)}
\end{pmatrix}=\begin{pmatrix}
    0\\
    0\\
    0\\
    1
\end{pmatrix} \\
    & \\
     & \begin{pmatrix}
        \big(\sigma_{2+}^{(2)}\cdot(X^{(2)}_{x11}\sigma_{11}^{(2)}+X^{(2)}_{x12}\sigma_{12}^{(2)})-\sigma_{1+}^{(2)}\cdot(X^{(2)}_{x21}\sigma_{21}^{(2)}+X^{(2)}_{x22}\sigma_{22}^{(2)})\big)_{1\le x\le4}\\
        \big(\sigma_{+2}^{(2)}\cdot(X^{(3)}_{x11}\sigma_{11}^{(2)}+X^{(3)}_{x21}\sigma_{21}^{(2)})-\sigma_{+1}^{(2)}\cdot(X^{(3)}_{x12}\sigma_{12}^{(2)}+X^{(3)}_{x22}\sigma_{22}^{(2)})\big)_{1\le x\le4}\\
        (1)_{1\le x\le4}
     \end{pmatrix}\cdot\begin{pmatrix}
    \sigma_{1}^{(1)}\\
    \sigma_{2}^{(1)}\\
    \sigma_{3}^{(1)}\\
    \sigma_{4}^{(1)}
\end{pmatrix}=\begin{pmatrix}
    0\\
    0\\
    1
\end{pmatrix}
\end{array}\right.
\end{equation*}
where $\sigma^{(1)}_i$ for $i\in[4]$ and $\sigma^{(2)}_{ij}$ for $i,j\in[2]$ are the coordinates of the first  and second factor of $\mathcal M_G$ respectively.
The existence of a (totally mixed) Nash CI equilibrium is equivalent to the existence of some positive $(\sigma_i^{(1)})_{1\le i\le4}$ and $(\sigma_{i,j}^{(2)})_{1\le i,j\le2}$ verifying the (linear) equations.
We set the payoff tensors as follows:

\[
\begin{matrix}
    X^{(1)}_{111}=1;&X^{(1)}_{112}=0;&X^{(1)}_{121}=1;&X^{(1)}_{122}=15;&X^{(1)}_{211}=0;&X^{(1)}_{212}=0;&X^{(1)}_{221}=0;&X^{(1)}_{222}=20;\\
    X^{(1)}_{311}=-1;&X^{(1)}_{312}=7;&X^{(1)}_{321}=2;&X^{(1)}_{322}=11;&X^{(1)}_{411}=0;&X^{(1)}_{412}=2;&X^{(1)}_{421}=0;&X^{(1)}_{422}=18;\\
    X^{(2)}_{111}=1;&X^{(2)}_{112}=0;&X^{(2)}_{121}=0;&X^{(2)}_{122}=0;&X^{(2)}_{211}=1;&X^{(2)}_{212}=3;&X^{(2)}_{221}=0;&X^{(2)}_{222}=2;\\
    X^{(2)}_{311}=9;&X^{(2)}_{312}=1;&X^{(2)}_{321}=0;&X^{(2)}_{322}=30;&X^{(2)}_{411}=4;&X^{(2)}_{412}=4;&X^{(2)}_{421}=0;&X^{(2)}_{422}=5;\\
    X^{(3)}_{111}=1;&X^{(3)}_{112}=0;&X^{(3)}_{121}=1;&X^{(3)}_{122}=0;&X^{(3)}_{211}=1;&X^{(3)}_{212}=0;&X^{(3)}_{221}=0;&X^{(3)}_{222}=10;\\
    X^{(3)}_{311}=9;&X^{(3)}_{312}=0;&X^{(3)}_{321}=10;&X^{(3)}_{322}=3;&X^{(3)}_{411}=4;&X^{(3)}_{412}=0;&X^{(3)}_{421}=0;&X^{(3)}_{422}=5
\end{matrix}
\]
For these payoff tensors, the only possible value of $(\sigma^{(2)}_{i,j})_{i,j\in[2]}$ is $(\frac8{21};\frac17;\frac13;\frac17)$.
Note that this implies that there is no totally mixed Nash equilibrium. Indeed, a totally mixed Nash equilibrium must be contained in the set of Nash CI equilibria and it must satisfy the extra condition $\sigma_{11}^{(2)}\sigma_{22}^{(2)}=\sigma_{12}^{(2)}\sigma_{21}^{(2)}$, which is not satisfied by the point $(\frac8{21};\frac17;\frac13;\frac17)$.
The equation of the Nash CI variety is 
\[\left\{\begin{array}{c}
-320 \sigma^{(1)}_1-416\sigma^{(1)}_2+960\sigma^{(1)}_3-1100\sigma^{(1)}_4 = 0,\vspace*{2mm}

\\ 
-360 \sigma^{(1)}_1+1608\sigma^{(1)}_2 -2868\sigma^{(1)}_3+132\sigma^{(1)}_4 = 0.

\end{array}\right.
\]
Adding the equation $\sigma^{(1)}_1+\sigma^{(1)}_2+\sigma^{(1)}_3+\sigma^{(1)}_4=1$, we get the parametric equations 
\[
\left(\sigma^{(1)}_1,\sigma^{(1)}_2,\sigma^{(1)}_3,\sigma^{(1)}_4\right) = \left(\frac{913}{5933} - \frac{106095}{47464}\,t,\,\, \frac{3290}{5933} + \frac{27635}{47464}\,t,\,\,\frac{1730}{5933} + \frac{7749}{11866}\,t,\,\,t \right) \text{ for }
 t \in \mathbb{R}.
\]
The intersection with the interior of the probability simplex is parametrized by the values of $t$ in the interval $\left(0,\tfrac{664}{9645}\right)$. 
\end{example}

Theorem \ref{theo:emptyness} provides a criterion for the emptiness of generic Nash CI varieties. For generic Spohn CI varieties of a general graph, the question remains open. Nevertheless, we derive from Theorem \ref{theo:emptyness} necessary conditions for a generic Spohn CI variety of a general graph to be nonempty. 

\begin{lemma}[{\cite[Lemma $14$]{portakal2024game}}]\label{lem:inclusion} If $G$ is a subgraph of $G'$ with the same vertices, then for any payoff tensors $X$, we have $\mathcal V_{X,G'}=\emptyset\Longrightarrow\mathcal V_{X,G}=\emptyset$.
\end{lemma}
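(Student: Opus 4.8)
The plan is to obtain the lemma as the contrapositive of the monotonicity relation $\mathcal{V}_{X,G}\subseteq\mathcal{V}_{X,G'}$ recorded in the filtration discussion following Definition~\ref{Definition 1.25}. Granting that inclusion, the statement is immediate: if $\mathcal{V}_{X,G'}=\emptyset$, then $\mathcal{V}_{X,G}\subseteq\mathcal{V}_{X,G'}=\emptyset$ forces $\mathcal{V}_{X,G}=\emptyset$. Since the lemma must hold for \emph{arbitrary} payoff tensors $X$, no genericity input (and in particular none of the dimension results of Section~\ref{sec:dim}) may be used; all the content is therefore in giving a derivation of the inclusion that is valid for every $X$.

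First I would establish the inclusion of parametrized graphical models $\mathcal{M}_G\subseteq\mathcal{M}_{G'}$. Because $G\subseteq G'$ shares the vertex set of $G'$ and has a subset of its edges, every maximal clique $C\in\mathcal{C}(G)$ is contained in at least one maximal clique of $G'$; fix such an assignment $\iota:\mathcal{C}(G)\to\mathcal{C}(G')$ with $C\subseteq\iota(C)$. Given a point factorizing according to $G$, say $p_{j}=\prod_{C\in\mathcal{C}(G)}\sigma^{(C)}_{j_C}$, I would regroup the factors along the fibers of $\iota$ and set $\tau^{(C')}_{j_{C'}}:=\prod_{C:\,\iota(C)=C'}\sigma^{(C)}_{j_C}$; this is well defined as a function of $j_{C'}$ precisely because $C\subseteq C'$ for each $C$ in the product, and it exhibits $p$ as factorizing according to $G'$. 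Hence $\mathrm{im}\,\phi_G\subseteq\mathrm{im}\,\phi_{G'}$, and passing to Zariski closures yields $\mathcal{M}_G\subseteq\mathcal{M}_{G'}$.

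I would then push this inclusion through Definition~\ref{Definition 1.25}. The Spohn variety $\mathcal{V}_X$ is graph-independent, so $\mathcal{V}_X\cap\mathcal{M}_G\subseteq\mathcal{V}_X\cap\mathcal{M}_{G'}$; removing the fixed hyperplane arrangement $\mathcal{W}$ preserves this, $(\mathcal{V}_X\cap\mathcal{M}_G)\setminus\mathcal{W}\subseteq(\mathcal{V}_X\cap\mathcal{M}_{G'})\setminus\mathcal{W}$; and since Zariski closure is monotone, taking closures on both sides gives exactly $\mathcal{V}_{X,G}\subseteq\mathcal{V}_{X,G'}$. Combined with the reduction in the first paragraph, this proves the lemma.

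The only step with genuine content is the model inclusion $\mathcal{M}_G\subseteq\mathcal{M}_{G'}$, and its one subtle point is bookkeeping rather than geometry: a single maximal clique of $G'$ may absorb several maximal cliques of $G$, so $\iota$ need not be injective and the argument merges $\sigma$-factors rather than matching them bijectively. I would deliberately run the argument on the parametrized (toric) side, since the inclusion of monomial images holds on all of $\mathbb{P}^{d_1\cdots d_n-1}$; this avoids the divergence between the global, local, and pairwise Markov descriptions for non-chordal graphs, which only manifests on the boundary $\overline{\Delta}\setminus\Delta$ and which is exactly the obstruction warned about in the remark preceding Section~\ref{subsec: examples}.
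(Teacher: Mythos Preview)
Your proof is correct and follows exactly the approach the paper invokes: the lemma is cited from \cite[Lemma 14]{portakal2024game} without an in-paper proof, but the filtration discussion after Example~\ref{ex: first CI equilibria} already records the inclusion $\mathcal{V}_{X,G}\subseteq\mathcal{V}_{X,G'}$ (citing \cite[Section 4]{portakal2024game}), and your argument unpacks precisely that inclusion via $\mathcal{M}_G\subseteq\mathcal{M}_{G'}$ and the monotonicity of the operations in Definition~\ref{Definition 1.25}.
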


The following example shows that Lemma \ref{lem:inclusion} may be applied to the study of the emptiness of Spohn CI varieties of generic graphs.

\begin{example}\label{last example}
We consider the $4$-vertex graph $G$ with two edges connecting vertex $3$ to the vertices $2$ and $4$, and fix $d_2=d_3=d_4=2$.
Such a graph does not lead to a Nash CI variety since $G$ is not a disjoint union of complete graphs. However, we may use  Lemma \ref{lem:inclusion} combined with Theorem \ref{theo:emptyness} to study the emptyness of a generic Spohn CI variety associated to $G$.

First, for $d_1\leq 5$, we may consider the subgraph $G'$ of $G$ obtained by removing an edge of $G$. Applying Theorem \ref{theo:emptyness}, one deduces that $\mathcal V_{X,G'}$ is nonempty for a generic game $X$. By Lemma \ref{lem:inclusion}, we deduce that  $\mathcal V_{X,G}$ is nonempty for a generic game $X$.

Now, assume that $d_1>8$ and let $G'$ be the graph obtained by adding the edge connecting the vertices $2$ and $4$ to $G$. Then, $G'$ is a cluster graph and by Theorem \ref{theo:emptyness}, the Nash CI variety of a generic game $X$ associated to $G'$ is empty. By Lemma \ref{lem:inclusion}, we deduce that $\mathcal V_{X,G}$ is empty for generic $X$.
For the remaining case $d_1\in\{6,7,8\}$, the emptiness or not emptiness of a generic Spohn CI variety is not known.
\end{example}

\begin{corollary}\label{cor:emptiness}
Let $G$ be a graph with $n$ vertices. Then, the Spohn CI variety of a generic game $X$ and the graph $G$ is nonempty if one of the following conditions hold:
\begin{itemize}
    \item The vertex of $G$ standing for the player with highest number of pure strategies is not isolated.
    \item The graph $G$ has no isolated vertex.
    \item The graph $G$ is connected.
\end{itemize}
\end{corollary}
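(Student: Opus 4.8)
The plan is to reduce all three sufficient conditions to the single statement that \emph{some cluster subgraph} $G_0\subseteq G$ (on the same vertex set) has nonempty Nash CI variety for generic games, and then transport nonemptiness up to $G$ using the inclusion $\mathcal{V}_{X,G_0}\subseteq\mathcal{V}_{X,G}$ recorded in Lemma~\ref{lem:inclusion}. First I would note that, assuming $n\ge 2$ (a game has at least two players, as is implicit throughout), the three hypotheses are nested: a connected graph on at least two vertices has no isolated vertex, and if $G$ has no isolated vertex then in particular the vertex carrying the largest number of pure strategies is not isolated. Hence it suffices to prove the statement under the first bullet.

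So suppose the vertex $a$ with $d_a=\max_k d_k$ is not isolated, and choose any edge $\{a,b\}$ of $G$. Let $G_0\subseteq G$ be the spanning cluster graph whose only edge is $\{a,b\}$, all other vertices being isolated. Its maximal cliques are $\{a,b\}$, contributing $d_a d_b$ states, together with the singletons $\{c\}$ for $c\neq a,b$. I would then apply Theorem~\ref{theo:emptyness} to $G_0$: its Nash CI variety is nonempty for a generic game exactly when, for every isolated vertex $i\in\mathcal S_0=V\setminus\{a,b\}$,
$$d_i\le 1+\tfrac12\Big((d_a d_b-1)+\sum_{c\neq a,b}(d_c-1)\Big).$$

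The computational heart is to verify this inequality. Discarding the nonnegative summands $\sum_{c\neq a,b}(d_c-1)$ and using $d_i\le d_a$ for every $i$ (since $a$ is the global maximizer), it suffices to check $d_a\le 1+\tfrac12(d_a d_b-1)$. This rearranges to $\tfrac12+\tfrac{d_a}{2}(d_b-2)\ge 0$, which holds because every player has at least two strategies, so $d_b\ge 2$. Thus the hypothesis of Theorem~\ref{theo:emptyness} is met and $\mathcal{V}_{X,G_0}\neq\emptyset$ for $X$ in a dense set of games. For each such $X$ the inclusion $\mathcal{V}_{X,G_0}\subseteq\mathcal{V}_{X,G}$ (equivalently, the contrapositive of Lemma~\ref{lem:inclusion} applied to $G_0\subseteq G$) forces $\mathcal{V}_{X,G}\neq\emptyset$; hence $\mathcal{V}_{X,G}$ is nonempty for generic $X$, which establishes the first bullet and therefore all three.

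I expect the only genuine obstacle to be bookkeeping rather than mathematics: keeping the direction of Lemma~\ref{lem:inclusion} straight (emptiness propagates from the larger graph down to the smaller, so nonemptiness propagates from the smaller cluster graph $G_0$ up to $G$), and confirming that ``nonempty for generic games'' survives this passage because the dense locus furnished by Theorem~\ref{theo:emptyness} intersected with the domain of genericity is again dense. The role of the ``highest number of strategies'' hypothesis is precisely to ensure that every surviving isolated vertex of $G_0$ meets the bound, which is why this uniform single-edge choice of $G_0$ suffices.
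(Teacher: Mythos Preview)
Your proof is correct and follows essentially the same approach as the paper: reduce to the first bullet, pick a cluster subgraph in which the maximal-strategy vertex $a$ is not isolated, verify the inequality of Theorem~\ref{theo:emptyness}, and lift nonemptiness to $G$ via Lemma~\ref{lem:inclusion}. The only cosmetic difference is that you commit to the specific single-edge subgraph $G_0$ on $\{a,b\}$, whereas the paper allows an arbitrary cluster subgraph $G'$ with $a$ non-isolated and argues $D_1\ge 2d_a$ from the clique containing $a$; both routes check the same bound and the argument is otherwise identical.
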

\begin{proof}
    Note that  the second and third condition implies the first condition. Therefore, we may assume that the first condition holds and let $a$ be the vertex with highest number of strategies. Then, there exists a subgraph $G'$ of $G$ such that $G':=G_1\sqcup G_2\sqcup\ldots\sqcup G_k$ is a cluster graph whose connected components are $G_1,\ldots,G_k$ and $a$ is not an isolated vertex. Without loss of generality, assume that $a$ is a vertex of $G_1$. Let $i$ be an isolated vertex of $G'$. Then, since $a$ is not isolated, $D_1\geq 2d_a$. We deduce that  
    \[
    \sum_{l=1}^k(D_l-1) \geq 2d_a-1\geq 2d_i-1\geq 2d_i-2.
    \]
    In other words, \eqref{eq: final nonempty} holds. By Theorem \ref{theo:emptyness}, we deduce that the generic Nash CI variety of $G'$ is nonempty. By Lemma \ref{lem:inclusion}, we conclude that the generic Spohn CI variety of $G$ is nonempty.
\end{proof}

\begin{example}\label{ex 2 2-2}
    In \cite{spohn2007dependency}, Spohn shows that
    dependency equilibria can be better than Nash equilibria in the sense of Pareto. In this example, we show that this is also the case of (totally mixed) CI equilibria.
    We keep the case of Example \ref{example:Nash3play}: $3$ players, player $2$ and $3$ being connected by the only edge. We assume $d_1=d_2=d_3=2$ and we set the payoff tensors as 
    \begin{align*}
    X^{(1)}_{111}=0;X^{(1)}_{112}=4;X^{(1)}_{121}=X^{(1)}_{122}=0;&~X^{(1)}_{211}=3;X^{(1)}_{212}=X^{(1)}_{221}=0;X^{(1)}_{222}=1;\\
    X^{(2)}_{111}=2;X^{(2)}_{112}=0;X^{(2)}_{121}=3;X^{(2)}_{122}=1;&~X^{(2)}_{211}=3;X^{(2)}_{212}=-1;X^{(2)}_{221}=4;X^{(2)}_{222}=0;\\
    X^{(3)}_{111}=2;X^{(3)}_{112}=3;X^{(3)}_{121}=0;X^{(3)}_{122}=1;&~X^{(3)}_{211}=3;X^{(3)}_{212}=4;X^{(3)}_{221}=-1;X^{(3)}_{222}=0.
    \end{align*}
    
For players $2$ and $3$, strategy $2$ strictly dominates strategy $1$, hence the only Nash equilibrium is each player choosing pure strategy $2$. The expected payoffs are respectively $1,0,0$.

In this case, the variety $N_{X,G}$ is the subvariety of $\mathcal{M}_G=\PP^1\times \PP^3$ given by the minors of the matrices
\begin{equation*}
\begin{array}{c}
\widetilde{M}^{(1)} = \begin{pmatrix}
    1 & 4\sigma^{(2)}_{12}\\
    1 & 3\sigma^{(2)}_{11}+\sigma^{(2)}_{22}
\end{pmatrix},\,\,\,
\widetilde{M}^{(2)} = \begin{pmatrix}
    \sigma^{(2)}_{11}+\sigma^{(2)}_{12} & 2\sigma^{(1)}_1\sigma^{(2)}_{11} + 3\sigma^{(1)}_2\sigma^{(2)}_{11} -\sigma^{(1)}_2\sigma^{(2)}_{12} \\
        \sigma^{(2)}_{21}+\sigma^{(2)}_{22} & 3\sigma^{(1)}_1\sigma^{(2)}_{21} + 4\sigma^{(1)}_2\sigma^{(2)}_{21} +\sigma^{(1)}_1\sigma^{(2)}_{22} \\
\end{pmatrix},\\  \\ \widetilde{M}^{(3)} = \begin{pmatrix}
    \sigma^{(2)}_{11}+\sigma^{(2)}_{21} & 2\sigma^{(1)}_1\sigma^{(2)}_{11} + 3\sigma^{(1)}_2\sigma^{(2)}_{11} -\sigma^{(1)}_2\sigma^{(2)}_{21} \\ \\
        \sigma^{(2)}_{12}+\sigma^{(2)}_{22} & 3\sigma^{(1)}_1\sigma^{(2)}_{12} + 4\sigma^{(1)}_2\sigma^{(2)}_{12} +\sigma^{(1)}_1\sigma^{(2)}_{22} \\
\end{pmatrix}.  
\end{array}
\end{equation*}

Here $\sigma^{(1)}_j$ and $\sigma^{(2)}_{j_1j_2}$ are the coordinates of $\PP^1$ and $\PP^3$ respectively. A \texttt{Macaulay2} (\cite{connelly2025gametheory}) computation shows that $N_{X,G}$ has dimension $1$ and degree $8$. Its irreducible components are 
\begin{equation}\label{eq:irred comp example}
\begin{array}{c}
\vspace*{1mm}
\left\langle \!
    \begin{array}{c} 13\sigma^{(1)}_1(\sigma^{(2)}_{21})^2\!+\!19\sigma^{(1)}_2(\sigma^{(2)}_{21})^2\!-\!2 \sigma^{(1)}_1\sigma^{(2)}_{21}\sigma^{(2)}_{22}\!-\!10\sigma^{(1)}_2\sigma^{(2)}_{21}\sigma^{(2)}_{22}\!+\!\sigma^{(1)}_1\sigma^{(2)}_{22} \!+\! 3 \sigma_2^{(1)} (\sigma_{22}^{(2)})^{2},
    \\ \sigma^{(2)}_{12}-\sigma^{(2)}_{21},3\sigma^{(2)}_{11}-4\sigma^{(2)}_{21}+\sigma^{(2)}_{22} 
   \end{array}\!\right \rangle,
         \\
         \vspace*{1mm}
     \langle     \sigma^{(1)}_1+\sigma^{(2)}_2,3\sigma^{(2)}_{11}-4\sigma^{(2)}_{12}+\sigma^{(2)}_{22},3\sigma^{(2)}_{12}\sigma^{(2)}_{21}-4\sigma^{(2)}_{12}\sigma^{(2)}_{22}+(\sigma^{(2)}_{22})^2
     \rangle ,
    
       \\
     \vspace*{1mm}
     \langle \sigma^{(2)}_{12}-\sigma^{(2)}_{22},\sigma^{(2)}_{11}-\sigma^{(2)}_{22},2\sigma^{(1)}_1\sigma^{(2)}_{21}+3\sigma^{(1)}_2\sigma^{(2)}_{21}-\sigma^{(1)}_2\sigma^{(2)}_{22}\rangle,
    
    \\ \vspace*{1mm}
      \langle \sigma^{(2)}_{11},\sigma^{(2)}_{12},\sigma^{(2)}_{22}\rangle.  
      
\end{array}
\end{equation}

We see that the second and last components in \eqref{eq:irred comp example} of $N_{X,G}$ are contained in the hyperplanes that we remove from the Nash CI variety. We deduce that in this case, $N_{X,G}$ and $\mathcal{V}_{X,G}$ do not coincide since the game it is not generic enough to satisfy Proposition \ref{prop:eq Nash CI}. In this case, $\mathcal{V}_{X,G}$ is obtained by removing the second and last components in \eqref{eq:irred comp example} from $N_{X,G}$. In particular, $\mathcal{V}_{X,G}$ is a Nash CI curve of degree $5$. We refer to \cite{portakal2024nash} for further details on Nash CI curves. 
The first component of \eqref{eq:irred comp example} does not intersect the probability simplex.
The intersection of the third component in \eqref{eq:irred comp example} with the probability simplex describes the totally mixed CI equilibria and it is parametrized by

\[\begin{array}{ccccc}
\sigma_{11}^{(2)}=\sigma_{12}^{(2)}=\sigma_{22}^{(2)}= t, &
\sigma_{21}^{(2)}= 1-3t, &
\sigma^{(1)}_{1}= \frac{10t-3}{4t-1}, & \text{ and } &
\sigma^{(1)}_{2}=\frac{2-6t}{4t-1}
\end{array}
\]
for $\frac{3}{10}<t<\frac{1}{3}$. Evaluating this parametrization at the expected payoffs of the players we get $4t$ for the first player, $1$ for the second, and $2$ for the third player. We deduce that for any value of $t$ we get a strictly better expected payoff than the only Nash equilibrium.

\end{example}

\appendix
\section{Degree formula for certain CI models}\label{appendix: degree CI}

In Section \ref{sec:dim}, the dimension of the Spohn CI variety for generic $X$ is computed for any graph. In Section \ref{sec:nash ci}, the degree of the Spohn CI variety for generic $X$ is computed for cluster graphs. Computing this degree for more general graphs seems to be a hard problem, since even $\deg(\mathcal M_G)$ is not fully determined in general. While $\dim(\mathcal M_G)$ has been computed in full generality in \cite[Corollary 2.7]{hocsten2002grobner}, the only known result about $\deg(\mathcal M_G)$ is a recursion formula in the binary case for trees (\cite[Theorem 5.5]{develin2003markov}). Coming back to Subsection \ref{subsec CI}, this appendix provides a stand-alone demonstration of the computation of $\deg(\mathcal M_G)$ for certain graphs in the nonbinary case, as this insight could aid in understanding the degree of Spohn CI varieties more generally. It has no direct link with the main results about CI equilibria from the previous sections. The authors had first explored this track with the consideration that one could reuse the argument from \cite[Theorem 6]{portakal2022geometry} to study $\mathcal V_{X,G}$ at least when $\mathcal M_G$ and $\mathcal V_X$ intersect transversally and no irreducible component has to be removed for generic $X$ (easiest case).

\subsection{Notations and classical results}
\label{classical}
For an integer matrix $A$, we denote by $X_A$ the corresponding projective toric variety and $\mathcal P(A)$ the associated lattice polytope (convex hull of the columns of $A$). For a graphical model $G$, $n$ is by default its number of vertices and $\mathcal R_{[n]}=[d_1]\times[d_2]\times\cdots\times[d_n]$ its state space. For simplicity, we take the two notations $\dim(G)=\dim(\mathcal M_G)$ and $\deg(G)=\deg(\mathcal M_G)$.

\begin{definition}
\label{Definition $2.11$}
A \textit{parametrization matrix} $A_G$ of a graphical model $G$ is any $0/1$ matrix such that:

$\bullet$ its columns are indexed by the elements of $\mathcal R_{[n]}=[d_1]\times[d_2]\times\cdots\times[d_n]$

$\bullet$ its rows are indexed by the $(a_i)_{i\in C}\in\mathcal R_C$ for each maximal clique $C\in\mathcal C(G)$

$\bullet$ the entry of row $(a_i)_{i\in C}$ and column $(x_i)_{i\in[n]}$ is $1$ if $a_i=x_i$ for all $i\in C$, and $0$ otherwise.
\end{definition}

\noindent By definition, $\mathcal M_G=X_{A_G}$. All the parameterization matrices of a graphical model are the same up to relabeling of the rows and columns. Thus, when we write ``$=$" with such matrices, we mean that there exists a labeling such that the equality holds.

\begin{definition}\label{Definition $2.12$}
For integer matrices $A,B$, we define $A\otimes B$ as a matrix whose columns are $\left(\begin{smallmatrix}
    a\\ b
\end{smallmatrix}\right)$ for all columns $a$ of $A$ and $b$ of $B$. Its number of rows is the sum of the ones of $A$ and $B$, and its number of columns the product of the ones of $A$ and $B$.
\end{definition}
\noindent This corresponds to a special case of toric fiber products, called the Segre product (\cite{sullivant2007toric}).
\begin{corollary}
\label{Proposition $2.13$:} Given two integer matrices $A$ and $B$, then we have the equalities $\mathcal P(A\otimes B)=\mathcal P(A)\times\mathcal P(B)$ and $X_{A\otimes B}=X_A\times X_B$.
\end{corollary}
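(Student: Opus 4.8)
The plan is to prove the two equalities separately: the polytope identity follows from an elementary fact about convex hulls of products, while the variety identity is most transparent at the level of the monomial parametrizations, where the operation $\otimes$ corresponds exactly to the Segre embedding.

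For the polytope statement, recall that by Definition~\ref{Definition $2.12$} the columns of $A\otimes B$ are precisely the vectors $\left(\begin{smallmatrix}a\\ b\end{smallmatrix}\right)$, as $a$ ranges over the columns of $A$ and $b$ over the columns of $B$. Writing $S$ and $T$ for the finite sets of columns of $A$ and $B$, we have $\mathcal P(A\otimes B)=\mathrm{conv}(S\times T)$ and $\mathcal P(A)\times\mathcal P(B)=\mathrm{conv}(S)\times\mathrm{conv}(T)$, so it suffices to prove the general identity $\mathrm{conv}(S\times T)=\mathrm{conv}(S)\times\mathrm{conv}(T)$. The inclusion $\subseteq$ is immediate, since $\mathrm{conv}(S)\times\mathrm{conv}(T)$ is a convex set containing $S\times T$. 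For $\supseteq$, given $x=\sum_i\lambda_i a_i$ and $y=\sum_j\mu_j b_j$ with $\lambda_i,\mu_j\ge 0$ and $\sum_i\lambda_i=\sum_j\mu_j=1$, the coefficients $\lambda_i\mu_j$ are nonnegative, sum to $1$, and satisfy $\sum_{i,j}\lambda_i\mu_j(a_i,b_j)=(x,y)$; hence $(x,y)\in\mathrm{conv}(S\times T)$. This settles the first equality.

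For the toric variety statement I would argue through the parametrizations. Let $A$ have columns $a_1,\dots,a_p\in\Z^{m}$ and $B$ have columns $b_1,\dots,b_q\in\Z^{m'}$, so that $X_A\subseteq\PP^{p-1}$ and $X_B\subseteq\PP^{q-1}$ are the Zariski closures of the images of the monomial maps $\psi_A\colon s\mapsto(s^{a_i})_i$ and $\psi_B\colon t\mapsto(t^{b_j})_j$ on $(\C^{*})^{m}$ and $(\C^{*})^{m'}$. Since the columns of $A\otimes B$ are indexed by pairs $(i,j)$ with entry vector $\left(\begin{smallmatrix}a_i\\ b_j\end{smallmatrix}\right)$, its monomial map is $\psi_{A\otimes B}\colon(s,t)\mapsto(s^{a_i}t^{b_j})_{(i,j)}\in\PP^{pq-1}$. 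The key observation is that this is exactly the composite of $(\psi_A,\psi_B)$ with the Segre embedding $\mathrm{Seg}\colon\PP^{p-1}\times\PP^{q-1}\to\PP^{pq-1}$, which sends $((u_i)_i,(v_j)_j)$ to $(u_iv_j)_{(i,j)}$. Consequently the image of $\psi_{A\otimes B}$ lies in $\mathrm{Seg}(X_A\times X_B)$ and, because the torus images of $\psi_A$ and $\psi_B$ are dense in $X_A$ and $X_B$ respectively, it is dense there.

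It then remains to pass to Zariski closures, and this is the only step requiring genuine care. Since $\mathrm{Seg}$ is a closed immersion and $X_A\times X_B$ is closed in $\PP^{p-1}\times\PP^{q-1}$, the set $\mathrm{Seg}(X_A\times X_B)$ is already a closed subvariety of $\PP^{pq-1}$, so taking the closure of the dense image of $\psi_{A\otimes B}$ adds no new points; by definition of $X_{A\otimes B}$ as that closure, we obtain $X_{A\otimes B}=\mathrm{Seg}(X_A\times X_B)$, which under the usual identification is the asserted equality $X_{A\otimes B}=X_A\times X_B$. The main obstacle is thus precisely this closedness argument: one must invoke that the Segre embedding is a closed immersion to guarantee that the product variety is mapped to a closed set and that density is preserved under both the product and $\mathrm{Seg}$; everything else is formal once $\psi_{A\otimes B}$ is recognized as the Segre composite.
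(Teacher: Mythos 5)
Your proof is correct, but it is worth noting that the paper itself offers no argument for this statement at all: it is stated as a corollary of the observation that $A\otimes B$ is the Segre product, a special case of the toric fiber product construction, with a citation to Sullivant's work on toric fiber products. Your route is a self-contained, elementary verification of the same facts. The polytope identity via $\mathrm{conv}(S\times T)=\mathrm{conv}(S)\times\mathrm{conv}(T)$ is clean and complete, and your key observation for the variety identity --- that $\psi_{A\otimes B}$ is exactly the composite of $(\psi_A,\psi_B)$ with the Segre embedding --- is precisely the reason the operation is called the Segre product, so conceptually you are reconstructing what the citation encapsulates. Your closing step is also handled correctly: since the Segre map is a closed immersion, $\mathrm{Seg}(X_A\times X_B)$ is closed and density passes through, so the closure defining $X_{A\otimes B}$ is $\mathrm{Seg}(X_A\times X_B)$. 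The only point you use without comment is that $\mathrm{im}\,\psi_A\times\mathrm{im}\,\psi_B$ is Zariski dense in $X_A\times X_B$ (density of a product of dense subsets in the product variety, which carries the Zariski, not the product, topology); this is true and standard, but a one-line justification would make the argument airtight. What each approach buys: the paper's citation is shorter and situates the statement inside the general toric fiber product machinery, while your proof makes the identification explicit and keeps the appendix self-contained, which is arguably in the spirit of the paper's stated goal of giving a stand-alone treatment there.
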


\begin{proposition}\label{Theorem $2.15$}
    Let $Y\in\PP^m$ and $Z\in \PP^n$ be equidimensional varieties of possibly distinct dimension. Then the degree of $Y\times Z\in\PP^m\times\PP^n$ is 
    \begin{equation}\label{eq: degre prods}
    \displaystyle \deg(Y\times Z) = \binom{\dim(Y)+\dim(Z)}{\dim(Y)}\deg(Y)\deg(Z).
    \end{equation}
\end{proposition}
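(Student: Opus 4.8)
The plan is to work in the Chow ring of $\mathbb{P}^m\times\mathbb{P}^n$ and reduce the whole statement to a single extraction of a binomial coefficient. By the K\"unneth formula (exactly as used for $A[G]$ in \eqref{eq:chow segre}), the Chow ring of $\mathbb{P}^m\times\mathbb{P}^n$ is $\mathbb{Z}[h_1,h_2]/\langle h_1^{m+1},h_2^{n+1}\rangle$, where $h_1$ and $h_2$ are the pullbacks of the hyperplane classes of the two factors. The key convention to fix is that the \emph{degree} of a subvariety of $\mathbb{P}^m\times\mathbb{P}^n$ in the sense of \eqref{eq: degre prods} is its degree under the Segre embedding, whose ambient hyperplane class pulls back to $h:=h_1+h_2$. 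Concretely, for an equidimensional $d$-dimensional subvariety $W$, the number $\deg(W)$ is the coefficient of the point class $h_1^m h_2^n$ in $[W]\cdot(h_1+h_2)^{d}$.

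First I would record the fundamental classes of the two factors. Since $Y$ is equidimensional of dimension $\dim(Y)$, its class in the Chow ring of $\mathbb{P}^m$ is $[Y]=\deg(Y)\,h_1^{m-\dim(Y)}$, and likewise $[Z]=\deg(Z)\,h_2^{n-\dim(Z)}$; here the equidimensionality hypothesis is precisely what guarantees that $[Y]$ is a single homogeneous term, so that $\deg(Y)$ is well defined. Using the product structure of the Chow ring, the class of $Y\times Z$ is
\[
[Y\times Z]=[Y]\cdot[Z]=\deg(Y)\deg(Z)\,h_1^{\,m-\dim(Y)}\,h_2^{\,n-\dim(Z)}.
\]

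Next I would compute the degree directly. Write $a:=\dim(Y)$ and $b:=\dim(Z)$, so that $\dim(Y\times Z)=a+b$. Expanding by the binomial theorem,
\[
[Y\times Z]\cdot(h_1+h_2)^{a+b}
=\deg(Y)\deg(Z)\sum_{k=0}^{a+b}\binom{a+b}{k}h_1^{\,m-a+k}\,h_2^{\,n-b+(a+b-k)}.
\]
Under the relations $h_1^{m+1}=h_2^{n+1}=0$, the only term that survives as a nonzero multiple of the point class $h_1^m h_2^n$ must satisfy $m-a+k=m$ and $n-b+(a+b-k)=n$ simultaneously, and both equations force $k=a$. Reading off this unique surviving term gives
\[
\deg(Y\times Z)=\binom{a+b}{a}\deg(Y)\deg(Z)=\binom{\dim(Y)+\dim(Z)}{\dim(Y)}\deg(Y)\deg(Z),
\]
as claimed.

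The argument is short, and there is no genuine obstacle once the setup is in place; the single delicate point is making precise what ``degree'' means for a subvariety of a product of projective spaces, namely the Segre degree, equivalently the intersection number against $(h_1+h_2)^{\dim}$. The equidimensionality hypothesis is used solely to ensure that each factor carries a single-term class and a well-defined degree: without it one would have to sum over irreducible components of varying dimension, and the clean product formula would no longer hold.
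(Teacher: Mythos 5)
Your proof is correct and takes essentially the same route as the paper: both work in the Chow ring $\mathbb{Z}[x_1,x_2]/\langle x_1^{m+1},x_2^{n+1}\rangle$ of $\mathbb{P}^m\times\mathbb{P}^n$ and read off the degree as the coefficient of the point class in $[Y\times Z]\cdot(x_1+x_2)^{\dim(Y)+\dim(Z)}$. In fact your write-up is slightly more careful than the paper's: its displayed formula records $[Y\times Z]$ as the sum $\deg(Y)x_1^{m-\dim(Y)}+\deg(Z)x_2^{n-\dim(Z)}$, an evident typo for the product class $\deg(Y)\deg(Z)\,x_1^{m-\dim(Y)}x_2^{n-\dim(Z)}$ that you correctly use.
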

\begin{proof}
The Chow ring of $\PP^m\times\PP^n$ is $\mathbb{Z}[x_1,x_2]/\langle x_1^{m+1},x_2^{n+1}\rangle$ (see Equation \eqref{eq:chow segre}).
    A Chow ring computation shows that such degree is the coefficient of $x_1^mx_2^n$ of
    \[
    \left[Y\times Z\right]\cdot(x_1+x_2)^{\dim(Y)+\dim(Z)} = (\deg(Y)x_1^{m-\dim(Y)}+\deg(Z)x_2^{n-\dim(Z)}) \cdot(x_1+x_2)^{\dim(Y)+\dim(Z)}.
    \]
    Such coefficient coincides with \eqref{eq: degre prods}.
\end{proof}

\begin{definition}
    The \textit{disjoint union} $G_1\uplus G_2$ of graphical models $G_1$ and $G_2$ of state spaces $F_1$ and $F_2$ is the graphical model on $F_1\times F_2$ whose edges are only the ones of $G_1$ and $G_2$, no edge is added between a vertex of $G_1$ and a vertex of $G_2$.
\end{definition}
The following result is known, but we include a proof here for completeness.
\begin{proposition}\label{Theorem $2.16$}
For graphical models $G_1$ and $G_2$, $\dim(G_1\uplus G_2)=\dim(G_1)+\dim(G_2)$ and $$\deg(G_1\uplus G_2)={\dim(G_1)+\dim(G_2)\choose\dim(G_1)}\cdot\deg(G_1)\cdot\deg(G_2).$$
Moreover, for any graphical models $G_1,\ldots,G_r$ we have that \begin{equation}\label{eq:deg graph}\deg\bigg(\biguplus_{i=1}^rG_i\bigg)=
\frac{\bigg(\displaystyle\sum_{i=1}^r\dim(G_i)\bigg)!}{\displaystyle\prod_{i=1}^r\dim(G_i)!}\cdot\prod_{i=1}^r\deg(G_i).\end{equation}
\end{proposition}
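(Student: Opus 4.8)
The plan is to reduce the entire statement to the single structural observation that the disjoint union of graphical models corresponds, at the level of parametrization matrices, to the Segre product $\otimes$ of Definition~\ref{Definition $2.12$}. First I would record the combinatorial fact underlying this: because $G_1\uplus G_2$ contains no edge between a vertex of $G_1$ and a vertex of $G_2$, no two vertices from different components are adjacent, so every clique of $G_1\uplus G_2$ lies entirely in one component. Consequently the maximal cliques split as $\mathcal C(G_1\uplus G_2)=\mathcal C(G_1)\sqcup\mathcal C(G_2)$, with isolated vertices (singleton cliques) posing no difficulty. Using this, I would check directly from Definition~\ref{Definition $2.11$} that a parametrization matrix of $G_1\uplus G_2$ has its rows indexed by the cliques of $G_1$ stacked above those of $G_2$, its columns indexed by $F_1\times F_2$, and that an entry in a $G_1$-clique row depends only on the $F_1$-coordinate of its column (and symmetrically for $G_2$-clique rows). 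This is precisely the block description of $A_{G_1}\otimes A_{G_2}$, so that $A_{G_1\uplus G_2}=A_{G_1}\otimes A_{G_2}$ up to relabeling.

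Granting this identity, Corollary~\ref{Proposition $2.13$:} yields
\[
\mathcal M_{G_1\uplus G_2}=X_{A_{G_1}\otimes A_{G_2}}=X_{A_{G_1}}\times X_{A_{G_2}}=\mathcal M_{G_1}\times\mathcal M_{G_2},
\]
the product being embedded in $\mathbb P^{|F_1|\,|F_2|-1}$ through the Segre map. Additivity of dimension for a product of varieties gives $\dim(G_1\uplus G_2)=\dim(G_1)+\dim(G_2)$, and applying Proposition~\ref{Theorem $2.15$} with $Y=\mathcal M_{G_1}$ and $Z=\mathcal M_{G_2}$ reproduces the binomial degree formula of the two-factor case verbatim.

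Finally, I would obtain the $r$-factor formula \eqref{eq:deg graph} by induction on $r$, the case $r=1$ being immediate. Writing $H=\biguplus_{i=1}^{r-1}G_i$ and $D=\dim(H)=\sum_{i=1}^{r-1}\dim(G_i)$, the two-factor result applied to $H\uplus G_r$ together with the inductive hypothesis for $\deg(H)$ gives
\[
\deg\left(\biguplus_{i=1}^r G_i\right)=\binom{D+\dim(G_r)}{D}\cdot\frac{D!}{\prod_{i=1}^{r-1}\dim(G_i)!}\prod_{i=1}^{r-1}\deg(G_i)\cdot\deg(G_r).
\]
Expanding the binomial coefficient as $\tfrac{(D+\dim(G_r))!}{D!\,\dim(G_r)!}$, the factor $D!$ cancels and one is left with $\tfrac{(\sum_{i=1}^r\dim(G_i))!}{\prod_{i=1}^r\dim(G_i)!}\prod_{i=1}^r\deg(G_i)$, the desired multinomial expression.

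I expect no genuine obstacle here, as the argument is entirely structural once the cited results are invoked. The only step requiring care is the verification $A_{G_1\uplus G_2}=A_{G_1}\otimes A_{G_2}$, where one must track the clique-indexing of the rows and confirm that the resulting $0/1$ incidence pattern matches Definition~\ref{Definition $2.12$}; the degree and dimension statements, and the final induction, then follow formally.
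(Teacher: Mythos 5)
Your proposal is correct and follows essentially the same route as the paper's own proof: identify $A_{G_1\uplus G_2}=A_{G_1}\otimes A_{G_2}$ from the splitting of maximal cliques, invoke Corollary~\ref{Proposition $2.13$:} and Proposition~\ref{Theorem $2.15$} for the dimension and two-factor degree formulas, and derive the $r$-factor formula by induction. The only difference is that you spell out the matrix verification and the inductive arithmetic, which the paper leaves implicit.
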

\begin{proof}
    Since no clique of $G_1$ shares a vertex with a clique of $G_2$, $A_{G_1\uplus G_2}=A_{G_1}\otimes A_{G_2}$ by first taking the rows indexed by the elements of $\mathcal C(G_1)$ then the ones indexed by the elements of $\mathcal C(G_2)$ in Definition \ref{Definition $2.11$}. Then the equalities follow from Corollary \ref{Proposition $2.13$:} and Proposition \ref{Theorem $2.15$}. Formula \eqref{eq:deg graph} is derived by induction.
\end{proof}

\begin{remark}
    Proposition \ref{Theorem $2.16$} coincides with \cite[Lemma $5.4$]{develin2003markov} in the nonbinary case.
\end{remark}

\subsection{Degrees of some graphical models}\label{more degrees} We introduce the notion of join of projective varieties (see \cite[Section 1.2]{kuznetsov2021categorical}) to address more degree computations.

\begin{definition}
\label{def cartesian}
    Let $m,n\ge0$ and $Y\subset\mathbb P^m,Z\subset\mathbb P^n$ be projective varieties. We define the \textit{join} $J(Y,Z)\subset\mathbb P^{m+n+1}$ of $Y$ and $Z$ as the projective variety. $$\left\{(\lambda y_0:\lambda y_1:\cdots:\lambda y_m:\mu z_0:\mu z_1:\cdots:\mu z_n)~\Bigg|~\begin{matrix}
    \lambda,\mu\in\mathbb C\\
    (y_0:y_1:\cdots:y_m)\in Y\\
    (z_0:z_1:\cdots:z_n)\in Z
\end{matrix}\right\}.$$
\end{definition}

We highlight that the ideal of $J(Y,Z)$ is generated by the defining equations of $Y$ and $Z$ in their corresponding variables.
 The next result specializes Definition \ref{def cartesian} to toric varieties. We denote by $\textbf{1}$ a row of $1$'s of given size. 

\begin{corollary}\label{coro toric}
    For integer matrices $A$ and $B$: $$\displaystyle J(X_A,X_B)=X_{\left(\begin{smallmatrix}
    \left(\begin{smallmatrix}
        A\\\textbf{1}
    \end{smallmatrix}\right)&0\\0&\left(\begin{smallmatrix}
        B\\\textbf{1}
    \end{smallmatrix}\right)
\end{smallmatrix}\right)}=X_{\left(\begin{smallmatrix}
    A&0\\0&\left(\begin{smallmatrix}
        B\\\textbf{1}
    \end{smallmatrix}\right)
\end{smallmatrix}\right)}=X_{\left(\begin{smallmatrix}
    \left(\begin{smallmatrix}
        A\\\textbf{1}
    \end{smallmatrix}\right)&0\\0&B
\end{smallmatrix}\right)}.$$
\end{corollary}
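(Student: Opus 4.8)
The plan is to argue directly with the monomial parametrizations, since the three matrices on the right differ only by rows of $\textbf{1}$'s, which projectively amount to global rescalings of a block. Recall that for an integer matrix $M$ with columns $c_1,\dots,c_N$ the toric variety $X_M\subseteq\PP^{N-1}$ is the Zariski closure of the image of the monomial map $\tau\mapsto(\tau^{c_1}:\cdots:\tau^{c_N})$, where $\tau$ ranges over the torus indexed by the rows of $M$ and $\tau^{c}=\prod_k\tau_k^{c_k}$. Write $a_1,\dots,a_{m+1}$ for the columns of $A$ and $b_1,\dots,b_{n+1}$ for the columns of $B$, so that $X_A\subseteq\PP^m$ and $X_B\subseteq\PP^n$, and call $M_1,M_2,M_3$ the three right-hand matrices in the statement, read from left to right.

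First I would identify $M_1$ (both blocks homogenized by a row of $\textbf{1}$'s) with the join. Labelling the torus coordinates $t$ (rows of $A$), $u$ (the appended row after $A$), $v$ (rows of $B$) and $w$ (the appended row after $B$), the monomial map of $M_1$ reads
\[
(t,u,v,w)\longmapsto\big(u\,t^{a_1}:\cdots:u\,t^{a_{m+1}}:w\,v^{b_1}:\cdots:w\,v^{b_{n+1}}\big).
\]
Setting $\lambda=u$, $\mu=w$ and letting $(t^{a_1}:\cdots:t^{a_{m+1}})$ and $(v^{b_1}:\cdots:v^{b_{n+1}})$ run over the dense tori of $X_A$ and $X_B$, this is exactly the generic locus of the join of Definition~\ref{def cartesian}. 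Since such points are dense in $J(X_A,X_B)$ and $X_{M_1}$ is by definition the closure of the displayed image, taking Zariski closures yields $X_{M_1}=J(X_A,X_B)$.

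It then remains to show that deleting either appended row of $\textbf{1}$'s leaves the variety unchanged, that is $X_{M_1}=X_{M_2}=X_{M_3}$. The map of $M_2$ (obtained by deleting the $u$-row) is $(t,v,w)\mapsto(t^{a_1}:\cdots:t^{a_{m+1}}:w\,v^{b_1}:\cdots:w\,v^{b_{n+1}})$; dividing the image of $M_1$ by the scalar $u$—a projective equivalence on the torus—turns $(u\,t^{a_i}:w\,v^{b_j})$ into $(t^{a_i}:(w/u)\,v^{b_j})$, and as $(u,w)$ ranges over $(\C^*)^2$ the ratio $w/u$ ranges over all of $\C^*$. Hence the two torus images coincide as subsets of $\PP^{m+n+1}$, so their closures agree; the symmetric computation (dividing by $w$) gives $X_{M_1}=X_{M_3}$. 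This is consistent with, and could alternatively be deduced from, the fact highlighted just above the corollary that the ideal of $J(X_A,X_B)$ is generated by the defining equations of $X_A$ and $X_B$ in disjoint sets of variables, together with the standard observation that appending a constant row of $\textbf{1}$'s to a matrix does not change its projective toric variety.

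The only genuine subtlety—and the step I would check most carefully—is the bookkeeping of Zariski closures: one must confirm that the locus with $\lambda,\mu\in\C^*$ and $y,z$ in the open tori is dense in the full join (so that closing it up recovers all of $J(X_A,X_B)$), and that a row of $\textbf{1}$'s supported on a single block really contributes only a \emph{global} scalar to that block, which can be absorbed projectively against the scalar of the other block. Implicitly this also uses that $A$ and $B$ define honest projective toric varieties, i.e.\ that $\textbf{1}$ lies in their row spans, which holds for the parametrization matrices $A_G$ to which the corollary is applied.
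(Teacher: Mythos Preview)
Your argument is correct. The paper states the corollary without proof, treating it as an immediate consequence of Definition~\ref{def cartesian} together with the highlighted remark that the ideal of $J(Y,Z)$ is generated by the equations of $Y$ and $Z$ in their separate blocks of variables; your parametric computation makes this explicit and is exactly the kind of verification one would supply. One small over-caution: your closing caveat that $\textbf{1}$ must lie in the row span of $A$ (resp.\ $B$) is not needed---appending a row of $\textbf{1}$'s to any integer matrix multiplies every projective coordinate by the same torus parameter, which is projectively trivial, so $X_{M_1}=X_{M_2}=X_{M_3}$ holds for arbitrary $A$ and $B$ by precisely the rescaling argument you already gave.
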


\noindent The dimension and degree of join of two varieties in our specific case derives from the following result

\begin{corollary}[{\cite[Corollary 2.3]{aadlandsvik1987joins}}]\label{theo cartesian}
    Let $m,n\ge0$. $Y\subset\mathbb P^m$ and $Z\subset\mathbb P^n$ are irreducible varieties. Then:     $$\dim(J(Y,Z))=\dim(Y)+\dim(Z)+1\text{ and }\deg(J(Y,Z))=\deg(Y)\cdot\deg(Z).$$
\end{corollary}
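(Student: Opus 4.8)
The plan is to bypass the cited reference and give a self-contained argument through Hilbert series, exploiting the explicit description of the defining ideal of the join recorded in the text just before Corollary~\ref{coro toric}. Write $R_Y = \mathbb{C}[u_0,\dots,u_m]/I(Y)$ and $R_Z = \mathbb{C}[v_0,\dots,v_n]/I(Z)$ for the homogeneous coordinate rings of $Y$ and $Z$, where $I(Y)$ and $I(Z)$ are their homogeneous ideals. Since $J(Y,Z)\subset\mathbb{P}^{m+n+1}$ is cut out by $I(Y)$ and $I(Z)$ in the disjoint variable sets $u$ and $v$, its homogeneous ideal is $I(Y)+I(Z)$, and the standard identification $\mathbb{C}[u,v]/(I(Y)+I(Z))\cong R_Y\otimes_{\mathbb{C}}R_Z$, with the total grading in which all $u_i,v_j$ sit in degree one, realizes the coordinate ring of the join as a graded tensor product.

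First I would record the consequence for Hilbert functions: $(R_Y\otimes_{\mathbb{C}}R_Z)_d=\bigoplus_{a+b=d}(R_Y)_a\otimes(R_Z)_b$, so the Hilbert series factorize, $H_J(t)=H_Y(t)\,H_Z(t)$. Writing $H_Y(t)=Q_Y(t)/(1-t)^{\dim Y+1}$ and $H_Z(t)=Q_Z(t)/(1-t)^{\dim Z+1}$ with $Q_Y,Q_Z\in\mathbb{Z}[t]$ and $Q_Y(1)=\deg Y$, $Q_Z(1)=\deg Z$, multiplication gives
\[
H_J(t)=\frac{Q_Y(t)\,Q_Z(t)}{(1-t)^{\dim Y+\dim Z+2}}.
\]
Reading off the order of the pole at $t=1$ yields $\dim J(Y,Z)=\dim Y+\dim Z+1$, and evaluating the numerator at $t=1$ yields $\deg J(Y,Z)=Q_Y(1)\,Q_Z(1)=\deg Y\cdot\deg Z$, which is exactly the claim.

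The one point requiring care, and the step I expect to be the main obstacle, is ensuring that this Hilbert-series computation genuinely computes the invariants of the reduced variety $J(Y,Z)$, i.e.\ that no saturation or reducedness pathology intervenes. Here irreducibility of $Y$ and $Z$ enters decisively: over the algebraically closed field $\mathbb{C}$, the tensor product $R_Y\otimes_{\mathbb{C}}R_Z$ of two finitely generated integral domains is again an integral domain, so $I(Y)+I(Z)$ is prime; hence $J(Y,Z)$ is irreducible and the quotient is reduced. Since the Hilbert polynomial is unchanged under saturation, it computes $\dim$ and $\deg$ of $\mathrm{Proj}(R_Y\otimes_{\mathbb{C}}R_Z)=J(Y,Z)$ correctly in any case. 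If one prefers a geometric confirmation of the dimension and irreducibility, I would note that away from the two disjoint linear subspaces $\{v=0\}$ and $\{u=0\}$ the join is the image of the $\mathbb{P}^1$-bundle of lines over $Y\times Z$ under a map that is generically one-to-one, since each point $[u:v]$ with $u,v\neq 0$ lies on the unique line joining $[u]\in Y$ to $[v]\in Z$; this independently gives $\dim J=\dim Y+\dim Z+1$ and irreducibility.
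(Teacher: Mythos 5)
Your proof is correct. Note first that the paper itself offers no proof of Corollary~\ref{theo cartesian}: it is imported verbatim from \cite[Corollary 2.3]{aadlandsvik1987joins}, and the only argument the paper sketches is the subsequent remark, which handles the toric case only, by identifying the polytope of the join with a free sum and invoking Kushnirenko's theorem. Your Hilbert-series argument is therefore a genuinely different (and self-contained) route. It is also sound in its details: the coordinate ring of $J(Y,Z)$ is $R_Y\otimes_{\mathbb C}R_Z$ with the total grading, the Hilbert series multiply by the convolution formula, and the pole order and numerator value at $t=1$ then give exactly $\dim Y+\dim Z+2$ and $\deg Y\cdot\deg Z$ for the cone, i.e.\ the claimed dimension and degree of the join. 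Crucially, you isolate the one real subtlety correctly: to know that this computation returns the invariants of the reduced variety $J(Y,Z)$ rather than of some nonreduced or unsaturated scheme, you use that a tensor product of finitely generated domains over the algebraically closed field $\mathbb C$ is again a domain, so $I(Y)+I(Z)$ is prime --- hence radical and saturated (a homogeneous prime not containing the irrelevant ideal equals its own saturation), and it is the full homogeneous ideal of the join by the projective Nullstellensatz; this is also the point where irreducibility of $Y$ and $Z$ is genuinely needed, and it justifies a posteriori the paper's unproved assertion before Corollary~\ref{coro toric} that the equations of $Y$ and $Z$ generate the ideal of $J(Y,Z)$. What each approach buys: your argument proves the statement for arbitrary irreducible complex projective varieties with elementary graded algebra, while the paper's toric remark, though limited to the case $Y=X_A$, $Z=X_B$ actually used in Appendix~\ref{appendix: degree CI}, gives the combinatorial picture (triangulations of free sums) that motivates the degree product formula there.
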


\begin{remark}
   Corollary \ref{theo cartesian} has a geometric interpretation in the case of toric varieties. From Corollary \ref{coro toric}, $J(X_A,X_B)$ is a toric variety and the corresponding full-dimensional polytope is in fact the free sum (see for instance \cite{STAPLEDON201751}) of $\mathcal P(\Tilde A)$ and $\mathcal P\big(\begin{smallmatrix}
    \Tilde B\\\textbf{1}
\end{smallmatrix}\big)\cong\mathcal P(\Tilde B)$. By Kushnirenko's theorem (\cite{kushnirenko1976polyedres}), $\deg(J(X_A, X_B))$ is the normalized volume, i.e. the number of unitary simplices in a triangulation of this free sum. Such a triangulation can be obtained by linking each simplex in a triangulation of $\mathcal P(\tilde{A})$ to one from $\mathcal P(\tilde{B})$. Thus, the number of simplices in the triangulation equals the product of the numbers of simplices in the triangulations of  $\mathcal P(\tilde{A})$ and  $\mathcal P(\tilde{B})$. Hence, using Kushnirenko's theorem again, $\deg(J(X_A, X_B))$ is the product of $\deg(X_A)$ and $\deg(X_B)$.
\end{remark}

\begin{lemma}\label{Proposition $2.23$}
For graphical models $G_1,G_2$, we have that $$\displaystyle J\left(X_{A_{G_1}}, X_{A_{G_2}}\right)=X_{\left(\begin{smallmatrix}
    A_{G_1} & 0\\ 0 & A_{G_1}
\end{smallmatrix}\right)}.$$
\end{lemma}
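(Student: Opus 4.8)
The plan is to deduce the identity from Corollary~\ref{coro toric} together with one structural feature of parametrization matrices of graphical models. First I would apply Corollary~\ref{coro toric} with $A=A_{G_1}$ and $B=A_{G_2}$, which already gives
\[
J\!\left(X_{A_{G_1}},X_{A_{G_2}}\right)=X_{\left(\begin{smallmatrix} A_{G_1} & 0\\ 0 & \left(\begin{smallmatrix} A_{G_2}\\ \textbf{1}\end{smallmatrix}\right)\end{smallmatrix}\right)}.
\]
Thus the only thing left to prove is that deleting the bottom row $\left(0\mid\textbf{1}\right)$ from this block matrix does not change the projective toric variety, that is, $X_{\left(\begin{smallmatrix} A_{G_1} & 0\\ 0 & \left(\begin{smallmatrix} A_{G_2}\\ \textbf{1}\end{smallmatrix}\right)\end{smallmatrix}\right)}=X_{\left(\begin{smallmatrix} A_{G_1} & 0\\ 0 & A_{G_2}\end{smallmatrix}\right)}$. (Here I read the lower-right block of the claimed matrix as $A_{G_2}$, correcting the evident typo in the statement.)

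The key observation is that, by Definition~\ref{Definition $2.11$}, the all-ones row $\textbf{1}$ is literally a sum of rows of $A_G$. Fixing any maximal clique $C\in\mathcal C(G)$ and summing the rows indexed by $(a_i)_{i\in C}\in\mathcal R_C$ contributes, in each column $(x_i)_{i\in[n]}$, exactly one $1$ (from the summand with $a_i=x_i$ for all $i\in C$) and $0$ otherwise, so $\textbf{1}=\sum_{(a_i)_{i\in C}\in\mathcal R_C}\big(\text{row }(a_i)_{i\in C}\big)$. Applying this inside the $G_2$-block, the row $\left(0\mid\textbf{1}\right)$ is the sum, over one maximal clique of $G_2$, of the rows of $\left(\begin{smallmatrix} A_{G_1}&0\\ 0&A_{G_2}\end{smallmatrix}\right)$; in particular $\textbf{1}\in\mathrm{rowspan}_{\mathbb Z}(A_{G_1})$ and $\textbf{1}\in\mathrm{rowspan}_{\mathbb Z}(A_{G_2})$, so the full all-ones row also lies in the integer row span of $\left(\begin{smallmatrix} A_{G_1}&0\\ 0&A_{G_2}\end{smallmatrix}\right)$. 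This second fact guarantees that the toric ideal of the block-diagonal matrix is homogeneous, hence that $X_{\left(\begin{smallmatrix} A_{G_1}&0\\ 0&A_{G_2}\end{smallmatrix}\right)}$ is a genuine projective toric variety.

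I would then invoke the general principle that the toric ideal $I_M$ of an integer matrix $M$ depends only on the relation lattice $\ker_{\mathbb Z}M=\{w\in\mathbb Z^{N}:Mw=0\}$, since $I_M$ is generated by the binomials $x^{w_+}-x^{w_-}$ for $w\in\ker_{\mathbb Z}M$. Adjoining to $M$ a row $r$ with $r\in\mathrm{rowspan}_{\mathbb Z}(M)$ imposes only the constraint $r\cdot w=0$, which already holds on $\ker_{\mathbb Z}M$; therefore $\ker_{\mathbb Z}\!\left(\begin{smallmatrix}M\\ r\end{smallmatrix}\right)=\ker_{\mathbb Z}M$, the ideal is unchanged, and so is the variety. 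Taking $M=\left(\begin{smallmatrix} A_{G_1}&0\\ 0&A_{G_2}\end{smallmatrix}\right)$ and $r=\left(0\mid\textbf{1}\right)$, the previous paragraph shows $r\in\mathrm{rowspan}_{\mathbb Z}(M)$, which yields the desired equality and, combined with the first display, the lemma.

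The delicate point, and the only place the graphical-model hypothesis is truly needed, is the passage from the rational to the integer row span: the redundancy of $\left(0\mid\textbf{1}\right)$ and the homogeneity of $I_M$ both require $\textbf{1}$ to lie in the \emph{integer} span of the rows, not merely the rational one. This is exactly what the second paragraph supplies, exhibiting $\textbf{1}$ as an honest sum of rows with $\{0,1\}$ coefficients; no genericity, chordality, or other assumption on $G_1$ and $G_2$ is required, and the argument is entirely combinatorial.
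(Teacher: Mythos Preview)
Your proof is correct and follows essentially the same approach as the paper: both start from Corollary~\ref{coro toric} and then eliminate the appended all-ones row by exploiting that, for any fixed maximal clique $C$ of $G_2$, the corresponding rows of $A_{G_2}$ sum to $\textbf{1}$. The only cosmetic difference is that the paper phrases this as a reparametrization (absorbing the extra parameter $\lambda$ into the clique parameters $\sigma^{(C)}_{i_C}$), whereas you phrase it via the integer row span and invariance of $\ker_{\mathbb Z}M$; these are dual descriptions of the same observation.
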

\begin{proof} The statement is close to Corollary \ref{coro toric}, we just have to show that we can reparametrize the toric variety without the additional row of $1$'s: $$X_{\left(\begin{smallmatrix}
    A_{G_1} & 0\\ 0 & A_{G_2}
\end{smallmatrix}\right)}=X_{\left(\begin{smallmatrix}
    A_{G_1}&0\\0&\left(\begin{smallmatrix}
        A_{G_2}\\\textbf{1}
    \end{smallmatrix}\right)
\end{smallmatrix}\right)}.$$

\noindent For this, we fix $C\in\mathcal C(G_2)$, and let $\big(\sigma_{i_C}^{(C)}\big)_{i_C\in \mathcal R_C}$ be the family of parameters corresponding to the associated rows in $A_{G_2}$. Let's call $\lambda$ the parameter corresponding to the last row. The parameters only appear to the power $1$ ($0/1$-matrix) and one of the $\sigma_{i_C}^{(C)}$ appears if and only if the other ones are to the power $0$. Therefore, replacing the $\sigma_{i_C}^{(C)}$ by $\lambda\cdot\sigma_{i_C}^{(C)}$ gives rise to the same toric variety without the additional parameter $\lambda$. 
\end{proof}

\begin{definition}\label{def: star}
The \textit{join} $G_1\vee G_2$ of graphical models $G_1$ and $G_2$ of state spaces $F_1$ and $F_2$ is the graphical model on $F_1\times F_2$ whose edges are the ones of $G_1$, the ones of $G_2$, and in addition all the edges that connect every vertex of $G_1$ to every vertex of $G_2$.
\end{definition}

\begin{remark}
If $\overline G$ denotes the complement of the graph $G$ (the graph whose edge set is the complement of the one of $G$), then $G_1\vee G_2=\overline{\overline{G_1}\uplus\overline{G_2}}$.
\end{remark}

\begin{proposition}\label{Proposition $2.24$:}
If $K_p$ is a $p$-clique  and $G$ a graph, then $$A_{K_p\vee G}=\left(\begin{smallmatrix}
    A_G & 0 & 0 & . & . & . & 0\\
    0 & A_G & 0 & . & . & . & 0\\
    0 & 0 & A_G & . & . & . & 0\\
    . & . & . & . & & & . \\
    . & . & . & & . & & . \\
    . & . & . & & & . & . \\
    0 & 0 & 0 & . & . & . & A_G
\end{smallmatrix}\right)\text{ with }(\dim(K_p)+1)~~A_G\text{-blocks.}$$
In particular, a block-diagonal matrix whose blocks are all the parametrization matrices of a single graphical model $G$ is the parametrization matrix of the graphical model $K_p\vee G$ for some $p\ge1$. 
\end{proposition}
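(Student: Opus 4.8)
The plan is to unwind Definition~\ref{Definition $2.11$} directly for the graph $K_p\vee G$ and read off the block structure from its maximal cliques. Write $V$ for the vertex set of $K_p$ and $W$ for that of $G$, so that by Definition~\ref{def: star} the vertex set of $K_p\vee G$ is $V\sqcup W$ with state space $\mathcal R_V\times\mathcal R_W$. The first step is to identify the maximal cliques of $K_p\vee G$: a subset $Q\subseteq V\sqcup W$ is a clique of $K_p\vee G$ if and only if $Q\cap W$ is a clique of $G$, since every pair of vertices inside $V$, and every pair with one vertex in $V$ and one in $W$, is already an edge of the join. Hence the maximal cliques are exactly the sets $V\cup C$ with $C\in\mathcal C(G)$, and $C\mapsto V\cup C$ is a bijection $\mathcal C(G)\to\mathcal C(K_p\vee G)$.

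Next I would set up the index sets so that the block structure becomes visible. The columns of $A_{K_p\vee G}$ are indexed by $\mathcal R_V\times\mathcal R_W$, and a row is indexed by a pair $(b_V,b_C)$ with $b_V\in\mathcal R_V$ and $b_C\in\mathcal R_C$ ranging over the cliques $C\in\mathcal C(G)$. Since parametrization matrices are only determined up to relabeling of rows and columns, I take $b_V$ (respectively the $V$-coordinate $x_V$ of a column) as the outer block index, grouping together all rows and all columns sharing a given $K_p$-state. By Definition~\ref{Definition $2.11$}, the entry in row $(b_V,b_C)$ and column $(x_V,x_W)$ equals $1$ precisely when the two assignments agree on the clique $V\cup C$, that is, when $b_V=x_V$ and $b_C=(x_W)_C$. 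Therefore every off-diagonal block ($b_V\neq x_V$) vanishes identically, while on a diagonal block ($b_V=x_V$) the condition reduces to $b_C=(x_W)_C$, which is exactly the defining rule of $A_G$ in row $b_C$ (clique $C$) and column $x_W$. Using the bijection of cliques above, the rows with fixed $b_V$ are precisely the full row set of $A_G$, so each diagonal block equals $A_G$. This yields a block-diagonal matrix with $|\mathcal R_V|$ copies of $A_G$; since $\mathcal M_{K_p}=\mathbb P^{|\mathcal R_V|-1}$ is the full projective space, $|\mathcal R_V|=\dim(K_p)+1$, which is the asserted number of blocks.

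For the ``in particular'' statement I would run the construction in reverse. Given a block-diagonal matrix with $m\ge 1$ copies of $A_G$, choose $K_p$ to be the single vertex $K_1$ ($p=1$) whose state space has size $m$; then $\dim(K_1)+1=m$, and by the first part $A_{K_1\vee G}$ is block diagonal with exactly $m$ copies of $A_G$, matching the given matrix after the chosen labeling. The only delicate point, and the one I would state explicitly, is the clique identification $\mathcal C(K_p\vee G)=\{V\cup C:C\in\mathcal C(G)\}$ together with its bijectivity with $\mathcal C(G)$: this is what guarantees that the rows grouped by a fixed $K_p$-state reassemble into the complete row set of $A_G$, and an incorrect count or identification of the maximal cliques would silently corrupt both the block count and the shape of the diagonal blocks.
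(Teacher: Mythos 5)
Your proof is correct and follows essentially the same route as the paper: identify the maximal cliques of $K_p\vee G$ as $V\cup C$ for $C\in\mathcal C(G)$, group rows and columns by the $K_p$-state to exhibit the block-diagonal structure with $|\mathcal R_V|=\dim(K_p)+1$ blocks equal to $A_G$, and obtain the converse statement by taking $p=1$ with a suitable number of states on the unique vertex. The only difference is that you spell out the clique identification and the entrywise verification of Definition~\ref{Definition $2.11$}, which the paper leaves implicit.
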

\begin{proof}Let $F=[d_1]\times\cdots\times[d_n]$ and $F'$ be the state spaces for graphical models $G$ and $K_p$ respectively. First, the state space of $K_p\vee G$ is $F'\times F$ and its maximal cliques are of the form $K_p\vee C$ where $C$ is a maximal clique of $G$. Now, we consider the following order on the rows and columns of $A_{K_p\vee G}$: 

\begin{itemize}
\item We take any order $\prec$ on $T=\{(a_i)_{i\in C}\mid C\text{ maximal clique of }G\text{ and }~\forall i\in C,~a_i\in[d_i]\}$.
The rows are indexed by the set $$F'\times T=\{(a_i)_{i\in C}\mid C\text{ maximal clique of }K_p\vee G\text{ and }~\forall i\in C,~a_i\in[d_i]\}.$$
We order them first according to the lexicographic order on $F'$, then if the ``$F'$-parts" are equal we compare the rest with $\prec$.

\item We take the lexicographic order on $F'\times F$ for the columns.

\end{itemize}

By construction, this parametrization matrix is block-diagonal, each of the $|F'|=1+\dim(K_p)$ blocks being exactly a parametrization matrix of $G$ (with columns in the lexicographic order and rows ordered by $\prec$). Then, the second statement follows from taking $p=1$, and then any dimension for the unique vertex. 
\end{proof}

\begin{example}\label{Example $2.25$}
We consider the graphical model with vertices  set $[6]$ and with the edge set $\{(i,5),(i,6)\mid i\in[4]\}\cup\{(5,6),(1,2)\}$. Such a graph can be seen as $K_2\vee G$ where $G$ has $4$ vertices (in $[4]$) and one edge between $1$ and $2$ (see Figure \ref{fig: example star}). Then, assuming for instance that all the vertices are binary, the matrix associated to the whole graphical model is given by:

$$\left(\begin{matrix}
    A_G & 0 & 0 & 0\\
    0&A_G&0&0\\
    0&0&A_G&0\\
    0&0&0&A_G
\end{matrix}\right)\text{ where }A_G=\left(\begin{smallmatrix}
    1&1&1&1&0&0&0&0&0&0&0&0&0&0&0&0\\
    0&0&0&0&1&1&1&1&0&0&0&0&0&0&0&0\\
    0&0&0&0&0&0&0&0&1&1&1&1&0&0&0&0\\
    0&0&0&0&0&0&0&0&0&0&0&0&1&1&1&1\\
    1&1&0&0&1&1&0&0&1&1&0&0&1&1&0&0\\
    0&0&1&1&0&0&1&1&0&0&1&1&0&0&1&1\\
    1&0&1&0&1&0&1&0&1&0&1&0&1&0&1&0\\
    0&1&0&1&0&1&0&1&0&1&0&1&0&1&0&1
\end{smallmatrix}\right).$$

In the proof of Proposition \ref{Proposition $2.24$:}, the corresponding $\prec$ on $T$ is lexicographic among the $(a_i)_{i\in C}$ for each $C\in\mathcal C(G)=\{\{1,2\};\{3\};\{4\}\}$ and for all $(x,y,z)\in[2]^{\{1,2\}}\times[2]^{\{3\}}\times[2]^{\{4\}}$, $x\prec y\prec z$.

\begin{figure}[H]
\centering
\begin{minipage}{0.35\textwidth} 
\centering
\begin{tikzpicture}[node distance={1.5cm}, thick, main/.style = {draw, circle}]
    \node[main] (1) {1};
    \node[main] (2) [right of=1] {2};
    \node[main] (3) [right of=2] {3};
    \node[main] (4) [right of=3] {4};
    \node[main] (5) [below right of=1] {5};
    \node[main] (6) [right of=5] {6};
    \draw (1) -- (2);
    \draw (5) -- (6);
    \draw (1) -- (5);
    \draw (2) -- (5);
    \draw (3) -- (5);
    \draw (4) -- (5);
    \draw (1) -- (6);
    \draw (2) -- (6);
    \draw (3) -- (6);
    \draw (4) -- (6);
\end{tikzpicture}
\end{minipage}
\hspace{1cm} 
\begin{minipage}{0.35\textwidth} 
\centering
\begin{tikzpicture}[node distance={1.5cm}, thick,
    main/.style = {draw, circle},
    rednode/.style = {draw=red, fill=red!20, circle},
    bluenode/.style = {draw=blue, fill=blue!20, circle}]
    \node[bluenode] (1) {1};
    \node[bluenode] (2) [right of=1] {2};
    \node[bluenode] (3) [right of=2] {3};
    \node[bluenode] (4) [right of=3] {4};
    \node[rednode] (5) [below right of=1] {5};
    \node[rednode] (6) [right of=5] {6};
    \draw[blue] (1) -- (2);
    \draw[red] (5) -- (6);
    \draw[green] (1) -- (5);
    \draw[green] (2) -- (5);
    \draw[green] (3) -- (5);
    \draw[green] (4) -- (5);
    \draw[green] (1) -- (6);
    \draw[green] (2) -- (6);
    \draw[green] (3) -- (6);
    \draw[green] (4) -- (6);
\end{tikzpicture}
\end{minipage}
\caption{Graph of Example \ref{Example $2.25$} with its $\textcolor{red}{K_2}\textcolor{green}{\vee}\textcolor{blue}{G}$ decomposition.}
\label{fig: example star}
\end{figure}
\end{example}

\begin{theorem}
\label{Theorem $2.26$}
If $K_p$ is a $p$-clique and $G$ a graph, then $\deg(K_p\vee G)=\deg(G)^{\dim(K_p)+1}$.  
\end{theorem}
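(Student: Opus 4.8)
The plan is to identify $\mathcal{M}_{K_p\vee G}=X_{A_{K_p\vee G}}$ with the iterated join of $r:=\dim(K_p)+1$ copies of $\mathcal{M}_G$, and then to read off its degree from the multiplicativity of degrees under joins in Corollary~\ref{theo cartesian}. By Proposition~\ref{Proposition $2.24$:}, the matrix $A_{K_p\vee G}$ is block-diagonal with exactly $r$ blocks, each equal to $A_G$; write $M_s$ for the block-diagonal matrix with $s$ such blocks, so that $A_{K_p\vee G}=M_r$. I would prove by induction on $s$ that $X_{M_s}$ is the $s$-fold iterated join $J(\mathcal{M}_G,J(\mathcal{M}_G,\dots))$, that it is irreducible, and that $\deg(X_{M_s})=\deg(G)^s$, the base case $s=1$ being $X_{M_1}=\mathcal{M}_G$ with $\deg=\deg(G)$.

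The inductive step is where Lemma~\ref{Proposition $2.23$} does the work. By the second assertion of Proposition~\ref{Proposition $2.24$:}, the matrix $M_{s-1}$ is itself a parametrization matrix $A_{K_{p'}\vee G}$ of a graphical model with one fewer block; hence Lemma~\ref{Proposition $2.23$} applies with $G_1=K_{p'}\vee G$ and $G_2=G$, yielding
\[
J\bigl(X_{M_{s-1}},\,\mathcal{M}_G\bigr)=J\bigl(X_{A_{K_{p'}\vee G}},\,X_{A_G}\bigr)=X_{\left(\begin{smallmatrix}A_{K_{p'}\vee G}&0\\0&A_G\end{smallmatrix}\right)}=X_{M_s}.
\]
The point of passing through Lemma~\ref{Proposition $2.23$} rather than Corollary~\ref{coro toric} directly is precisely that it absorbs the auxiliary row of $1$'s appearing in the join construction, so that no extra coordinate survives and we land exactly on the block-diagonal matrix $M_s$ of copies of $A_G$.

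Finally, since each $\mathcal{M}_G$ is a toric variety it is irreducible, and every $X_{M_s}$ is again a toric variety (indeed $X_{M_s}=\mathcal{M}_{K_{p'}\vee G}$) and hence irreducible as well. Thus Corollary~\ref{theo cartesian} applies at each step and gives
\[
\deg(X_{M_s})=\deg\bigl(J(X_{M_{s-1}},\mathcal{M}_G)\bigr)=\deg(X_{M_{s-1}})\cdot\deg(\mathcal{M}_G).
\]
By induction $\deg(X_{M_s})=\deg(G)^{s}$, and taking $s=r=\dim(K_p)+1$ yields $\deg(K_p\vee G)=\deg(G)^{\dim(K_p)+1}$, as claimed. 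The only genuine subtlety, and thus the step I would be most careful about, is the clean identification $J(X_{M_{s-1}},\mathcal{M}_G)=X_{M_s}$: one must check that the reparametrization removing the extra row of $1$'s (as in the proof of Lemma~\ref{Proposition $2.23$}) remains legitimate when one factor is itself an iterated join, which is exactly what the block structure of Proposition~\ref{Proposition $2.24$:} guarantees.
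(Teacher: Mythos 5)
Your proposal is correct and follows essentially the same route as the paper: both use Proposition~\ref{Proposition $2.24$:} to realize $\mathcal M_{K_p\vee G}$ as a block-diagonal matrix of $A_G$-blocks, induct on the number of blocks using the second assertion of that proposition to see that the smaller block-diagonal matrix is again a parametrization matrix of a graphical model, and then combine Lemma~\ref{Proposition $2.23$} with Corollary~\ref{theo cartesian} to get multiplicativity of the degree under the join. The only cosmetic difference is the order of the two factors in the join at each inductive step, which is immaterial.
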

\begin{proof} Since the projective toric variety $\mathcal M_{K_p\vee G}$ can be associated to a block-diagonal matrix with only $A_G$-blocks by Proposition \ref{Proposition $2.24$:}, we show by induction that the degree of a projective toric variety associated to a block-diagonal matrix made of $x$ $A_G$-blocks is $\deg(G)^x$.
First, the statement is trivial for $x=1$. Now let's assume that the statement holds for $k\le x-1$ for some $x\ge2$. By the second statement in Proposition \ref{Proposition $2.24$:}, we can construct a graphical model $G_2$ with a parametrization matrix $A_{G_2}$ made of exactly $(x-1)$ $A_G$-blocks. Then, the induction hypothesis ensures that for  $G_2$, $\deg(X_{A_{G_2}})=\deg(G)^{x-1}$. Finally, Lemma \ref{Proposition $2.23$} and Corollary \ref{theo cartesian} imply that: $$\deg\bigg(X_{\left(\begin{smallmatrix}
    A_G& 0\\
    0& A_{G_2}
\end{smallmatrix}\right)}\bigg)=\deg\left(J\left(X_{A_G},X_{A_{G_2}}\right)\right)=\deg(X_{A_G})\cdot\deg(X_{A_{G_2}})=\deg(G)^x.$$
\end{proof}

 Since a star graph is a $1$-clique linked to every vertex of a $0$-edge graph, we deduce from Proposition \ref{Theorem $2.16$} and Theorem \ref{Theorem $2.26$} the following result.

\begin{corollary}
    \label{Corollary $2.27$:} Let $G$ be a star graph with $n$ vertices where the center of the star is vertex $1$. Then we have that  $$\deg(G)=\left(\frac{\bigg(\displaystyle\sum_{i=2}^n(d_i-1)\bigg)!}{\displaystyle\prod_{i=2}^n(d_i-1)!}\right)^{d_1}.$$
\end{corollary}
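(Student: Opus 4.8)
The plan is to decompose the star graph as a join and then chain together the two degree formulas already established in this appendix. Writing the center as the $1$-clique $K_1$ on vertex $1$ (so that $\mathcal{M}_{K_1}=\mathbb{P}^{d_1-1}$ and hence $\dim(K_1)=d_1-1$), and letting $G'$ denote the $0$-edge graph on the leaves $\{2,\ldots,n\}$, the star graph is exactly $K_1\vee G'$: Definition~\ref{def: star} adds precisely the edges joining the center to every leaf, while no two leaves are adjacent. This identification is the only conceptual step; the rest is bookkeeping.

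First I would invoke Theorem~\ref{Theorem $2.26$} with $p=1$. It gives
\[
\deg(G)=\deg(K_1\vee G')=\deg(G')^{\dim(K_1)+1}=\deg(G')^{\,d_1},
\]
where the last equality uses $\dim(K_1)+1=d_1$, which is what produces the outer exponent in the claimed formula.

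Next I would compute $\deg(G')$. Since $G'$ carries no edges, it is the disjoint union $\biguplus_{i=2}^{n}H_i$ of single-vertex graphical models $H_i$, where $H_i$ has state space $[d_i]$. Each such $H_i$ satisfies $\mathcal{M}_{H_i}=\mathbb{P}^{d_i-1}$, so $\dim(H_i)=d_i-1$ and $\deg(H_i)=1$. Substituting these values into the disjoint-union formula \eqref{eq:deg graph} of Proposition~\ref{Theorem $2.16$} yields
\[
\deg(G')=\frac{\left(\sum_{i=2}^{n}(d_i-1)\right)!}{\prod_{i=2}^{n}(d_i-1)!}\cdot\prod_{i=2}^{n}1=\frac{\left(\sum_{i=2}^{n}(d_i-1)\right)!}{\prod_{i=2}^{n}(d_i-1)!}.
\]
Combining the two displays gives the asserted expression for $\deg(G)$. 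There is no genuine obstacle: the argument is a direct application of the cited results, and the only points demanding care are recognizing the star as the join $K_1\vee G'$ (rather than a disjoint union, which would give a different constant), and tracking that the center's dimension contributes the exponent $d_1$ while the leaves contribute the multinomial coefficient.
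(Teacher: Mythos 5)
Your proof is correct and follows exactly the paper's own route: the paper likewise identifies the star graph as $K_1$ joined to the $0$-edge graph on the leaves and combines Theorem~\ref{Theorem $2.26$} (giving the exponent $\dim(K_1)+1=d_1$) with Proposition~\ref{Theorem $2.16$} (giving the multinomial coefficient). Your write-up merely spells out the bookkeeping the paper leaves implicit.
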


\begin{remark}
    In Corollary \ref{Corollary $2.27$:}, if $d_i=2$ for every $i$, we recover the formula $(n-1)!^2$ shown for the binary case in \cite[Corollary 5.6]{develin2003markov}. If $d_1=1$ we recover the Segre embedding: $d_k=1$ is the same as suppressing vertex $k$ and all the subsequent edges.
\end{remark}

\subsection{Examples of computable degrees} We summarize in Table \ref{table: equivalent operations} the equivalent operations as shown in Subsection \ref{classical} and Subsection \ref{more degrees}. This result enables us to compute recursively the degree of any graphical model as long as the graph can be obtained by combining the disjoint union $\uplus$ and the join $\vee$ with a clique. For such graphs, which form a subset of chordal graphs, the degree can be computed recursively by using alternatively Proposition \ref{Theorem $2.16$} and Theorem \ref{Theorem $2.26$}.

\begin{table}[H]
\begin{tabular}{ |c|c|c|c|c|c| } 
 \hline
 & Graphs $G$ & Matrices $A_G$ & Polytopes $\mathcal P(\cdot)$ & Varieties $\mathcal{M}_G$ & Degrees $\deg(\cdot)$\\
 \hline
 \ref{classical} & Disjoint union $\uplus$ & $\textquoteleft\textquoteleft
$Product" $\otimes$ & Product $\times$ & Segre prod. $\times$ & Multinomial coef. \\ 
 \ref{more degrees} & Join $\vee$ & Block-diagonal & Free sum & Join $J(\cdot,\cdot)$ & Multiplication \\ 
 \hline
\end{tabular}
\caption{Summary of equivalent operations on graph.}
\label{table: equivalent operations}
\end{table}

\begin{example}
\label{Example $2.28$}
Let us compute the degree for all $3$-vertices graphs with vertices $1,2,3$. While $0,1,3$ edges were already known, Theorem \ref{Theorem $2.26$} provides a general formula for $2$ edges since, in this case, it is the star graph with $2 $ leaves.
\begin{itemize}
    \item The $0$-edge graph has dimension $d_1+d_2+d_3-3$ and degree $\dfrac{(d_1+d_2+d_3-3)!}{(d_1-1)!(d_2-1)!(d_3-1)!}$.\vspace*{1mm}
    \item The $1$-edge graph with edge $(2,3)$ has dimension $d_1+d_2d_3-2$ and degree $\dfrac{(d_1+d_2d_3-2)!}{(d_1-1)!(d_2d_3-1)!}$.\vspace*{1mm}
    \item  The $2$-edge graph with missing edge $(1,3)$ has dimension $d_1d_2+d_2d_3-d_2-1$ and degree $$\bigg(\dfrac{(d_1+d_3-2)!}{(d_1-1)!(d_3-1)!}\bigg)^{d_2}.$$
    \item The $3$-edge (complete) graph has dimension $d_1d_2d_3-1$ and degree $1$.

\end{itemize}

\end{example}

\begin{example}\label{Example $2.29$}
We consider the graph with vertex set $[4]$ and edges $(i,j)$ for $1\le i<j\le3$ and $(3,4)$. Since the vertex $3$ is linked to every vertex, using Theorem \ref{Theorem $2.26$}, we just have to compute the degree associated to a $1$-edge $3$-vertex graph on $\{1,2,4\}$, which is Example \ref{Example $2.28$} (see Figure \ref{fig: graph 4 vertices}). Hence, the total degree is $$\left(\dfrac{(d_1d_2+d_4-2)!}{(d_1d_2-1)!(d_4-1)!}\right)^{d_3}.$$

\begin{figure}[H]
\centering
\begin{minipage}{0.3\textwidth} 
\centering
\begin{tikzpicture}[node distance={1.5cm}, thick, main/.style = {draw, circle}]
    \node[main] (2) {2};
    \node[main] (1) [below left of=2] {1};
    \node[main] (3) [below right of=2] {3};
    \node[main] (4) [above right of=3] {4};
    \draw (1) -- (2);
    \draw (1) -- (3);
    \draw (2) -- (3);
    \draw (3) -- (4);
\end{tikzpicture}
\end{minipage}
\hspace{1cm}
\begin{minipage}{0.3\textwidth}
\centering
\begin{tikzpicture}[node distance={1.5cm}, thick,
    main/.style = {draw, circle},
    rednode/.style = {draw=red, fill=red!20, circle},
    bluenode/.style = {draw=blue, fill=blue!20, circle}]
    \node[bluenode] (2) {2};
    \node[bluenode] (1) [below left of=2] {1};
    \node[rednode] (3) [below right of=2] {3};
    \node[bluenode] (4) [above right of=3] {4};
    \draw[blue] (1) -- (2);
    \draw[green] (1) -- (3);
    \draw[green] (2) -- (3);
    \draw[green] (3) -- (4);
\end{tikzpicture}
\end{minipage}
\caption{Graph of Example \ref{Example $2.29$} with its $\textcolor{red}{K_1}\textcolor{green}{\vee}(\textcolor{blue}{K_2\uplus K_1})$ decomposition}
\label{fig: graph 4 vertices}
\end{figure}
\end{example}

\begin{example}\label{Example $2.30$}
We consider the graph with vertex set $[7]$ and edges $(1,i)$ for $i\in[\![2,7]\!]$, $(2,i)$ for $i\in[\![3,7]\!]$, $(3,4)$ and $(5,6)$. Such a graph can be written in the form $K_2\vee\big(K_2\uplus K_2\uplus K_1\big)$ (see Figure \ref{fig: 7 vertices}). Using Theorem \ref{Theorem $2.26$} and Proposition \ref{Theorem $2.16$}, we get the degree formula $$\left(\dfrac{(d_3d_4+d_5d_6+d_7-3)!}{(d_3d_4-1)!(d_5d_6-1)!(d_7-1)!}\right)^{d_1d_2}.$$

\begin{figure}[H]
\centering
\begin{minipage}{0.35\textwidth} 
\centering
\begin{tikzpicture}[node distance={1.5cm}, thick, main/.style = {draw, circle}]
    \node[main] (1) {1};
    \node[main] (2) [below left of=1] {2};
    \node[main] (3) [left of=1] {3};
    \node[main] (4) [left of=2] {4};
    \node[main] (5) [right of=2] {5};
    \node[main] (6) [right of=1] {6};
    \node[main] (7) [below left of=5] {7};

    \draw (1) -- (2);
    \draw (1) -- (3);
    \draw (1) -- (4);
    \draw (1) -- (5);
    \draw (1) -- (6);
    \draw (1) -- (7);
    \draw (2) -- (3);
    \draw (2) -- (4);
    \draw (2) -- (5);
    \draw (2) -- (6);
    \draw (2) -- (7);
    \draw (3) -- (4);
    \draw (5) -- (6);
\end{tikzpicture}
\end{minipage}
\hspace{1cm}
\begin{minipage}{0.35\textwidth}
\centering
\begin{tikzpicture}[node distance={1.5cm}, thick,
    main/.style = {draw, circle},
    rednode/.style = {draw=red, fill=red!20, circle},
    bluenode/.style = {draw=blue, fill=blue!20, circle}]
    \node[rednode] (1) {1};
    \node[rednode] (2) [below left of=1] {2};
    \node[bluenode] (3) [left of=1] {3};
    \node[bluenode] (4) [left of=2] {4};
    \node[bluenode] (5) [right of=2] {5};
    \node[bluenode] (6) [right of=1] {6};
    \node[bluenode] (7) [below left of=5] {7};

    \draw[red] (1) -- (2);
    \draw[green] (1) -- (3);
    \draw[green] (1) -- (4);
    \draw[green] (1) -- (5);
    \draw[green] (1) -- (6);
    \draw[green] (1) -- (7);
    \draw[green] (2) -- (3);
    \draw[green] (2) -- (4);
    \draw[green] (2) -- (5);
    \draw[green] (2) -- (6);
    \draw[green] (2) -- (7);
    \draw[blue] (3) -- (4);
    \draw[blue] (5) -- (6);
\end{tikzpicture}
\end{minipage}
\caption{Graph of Example \ref{Example $2.30$} with its $\textcolor{red}{K_2}\textcolor{green}{\vee}\big(\textcolor{blue}{K_2\uplus K_2\uplus K_1}\big)$ decomposition}
\label{fig: 7 vertices}
\end{figure}
\end{example}

\begin{example}\label{Example $2.31$}
We consider the graph with vertex set $[5]$ and edges $(2,3),(3,4)$ and $(i,5)$ for $i\in[4]$. It can be written in the form $K_1\vee\big(K_1\uplus(K_1\vee(K_1\uplus K_1))\big)$ with vertices $5,1,3,2,4$ respectively (see Figure \ref{fig: 5 vertices}). Using Theorem \ref{Theorem $2.26$} and Proposition \ref{Theorem $2.16$}, we get the degree formula: $$\left(\dfrac{(d_1+d_2d_3+d_3d_4-d_3-2)!}{(d_1-1)!(d_2d_3+d_3d_4-d_3-1)!}\cdot\bigg(\dfrac{(d_2+d_4-2)!}{(d_2-1)!(d_4-1)!}\bigg)^{d_3}\right)^{d_5}$$

\begin{figure}[H]
\centering
\begin{minipage}{0.35\textwidth} 
\centering
\begin{tikzpicture}[node distance={1.5cm}, thick, main/.style = {draw, circle}]
    \node[main] (1) {1};
    \node[main] (2) [above right of=1] {2};
    \node[main] (3) [right of=2] {3};
    \node[main] (5) [right of=1] {5};
    \node[main] (4) [right of=5] {4};
    \draw (5) -- (1);
    \draw (5) -- (2);
    \draw (5) -- (3);
    \draw (5) -- (4);
    \draw (2) -- (3);
    \draw (3) -- (4);
\end{tikzpicture}
\end{minipage}
\hspace{1mm}
\begin{minipage}{0.6\textwidth}
\begin{subfigure}
\centering
\begin{tikzpicture}[node distance={1cm}, thick,
    rednode/.style = {draw=red, fill=red!20, circle, minimum size=0.5cm, inner sep=0pt},
    bluenode/.style = {draw=blue, fill=blue!20, circle, minimum size=0.5cm, inner sep=0pt}]
    \node[bluenode] (1) {1};
    \node[bluenode] (2) [above right of=1] {2};
    \node[bluenode] (3) [right of=2] {3};
    \node[rednode] (5) [right of=1] {5};
    \node[bluenode] (4) [right of=5] {4};
    \draw[green] (5) -- (1);
    \draw[green] (5) -- (2);
    \draw[green] (5) -- (3);
    \draw[green] (5) -- (4);
    \draw[blue] (2) -- (3);
    \draw[blue] (3) -- (4);
\end{tikzpicture}
\end{subfigure}
\hspace{3mm}
\begin{subfigure}
\centering
\begin{tikzpicture}[node distance={1cm}, thick,
    erednode/.style = {draw=red!25, fill=red!5, circle, minimum size=0.5cm, inner sep=0pt},
    bluenode/.style = {draw=blue, fill=blue!12, circle, minimum size=0.5cm, inner sep=0pt},
    ebluenode/.style = {draw=brown, fill=brown!20, circle, minimum size=0.5cm, inner sep=0pt},
    egreen/.style={green!25}]
    \node[ebluenode] (1) {1};
    \node[bluenode] (2) [above right of=1] {2};
    \node[bluenode] (3) [right of=2] {3};
    \node[erednode] (5) [right of=1] {5};
    \node[bluenode] (4) [right of=5] {4};
    \draw[egreen] (5) -- (1);
    \draw[egreen] (5) -- (2);
    \draw[egreen] (5) -- (3);
    \draw[egreen] (5) -- (4);
    \draw[blue] (2) -- (3);
    \draw[blue] (3) -- (4);
\end{tikzpicture}
\end{subfigure}
\hspace{3mm}
\begin{subfigure}
\centering
\begin{tikzpicture}[node distance={1cm}, thick,
    rednode/.style = {draw=purple, fill=purple!20, circle, minimum size=0.5cm, inner sep=0pt},
    erednode/.style = {draw=red!25, fill=red!5, circle, minimum size=0.5cm, inner sep=0pt},
    bluenode/.style = {draw=blue!80, fill=blue!16, circle, minimum size=0.5cm, inner sep=0pt},
    ebluenode/.style = {draw=purple!25, fill=purple!5, circle, minimum size=0.5cm, inner sep=0pt},
    egreen/.style={green!25}]
    \node[ebluenode] (1) {1};
    \node[bluenode] (2) [above right of=1] {2};
    \node[rednode] (3) [right of=2] {3};
    \node[erednode] (5) [right of=1] {5};
    \node[bluenode] (4) [right of=5] {4};
    \draw[egreen] (5) -- (1);
    \draw[egreen] (5) -- (2);
    \draw[egreen] (5) -- (3);
    \draw[egreen] (5) -- (4);
    \draw[green!90!black] (2) -- (3);
    \draw[green!90!black] (3) -- (4);
\end{tikzpicture}
\end{subfigure}
\end{minipage}
\caption{Graph of Example \ref{Example $2.31$} with its $\textcolor{red}{K_1}\textcolor{green}{\vee}\big(\textcolor{brown}{K_1}\uplus(\textcolor{purple}{K_1}\textcolor{green!90!black}{\vee}(\textcolor{blue!80}{K_1\uplus K_1}))\big)$ decomposition}
\label{fig: 5 vertices}
\end{figure}
\end{example}

Table \ref{table: equivalent operations} also enables decomposing (thus simplifying) the degree computation for a big graphical model as soon as it has either different connected components (Proposition \ref{Theorem $2.16$}) or a vertex linked to all other vertices (Theorem \ref{Theorem $2.26$}). In some cases, it may avoid using very heavy volume computations based on the lattice polytope (convex hull) associated to the parametrization matrix.

\section*{Acknowledgements} 
This paper grew out of the master's internship of MB at the Max Planck Institute for Mathematics in the Sciences. The work was recognized with the Louis-\'{E}douard Rivot Medal, awarded in 2024 by the \'{E}cole Polytechnique in France. We are grateful to Bernd Sturmfels for insightful comments on the geometric interpretation of Corollary \ref{theo cartesian}, and to Leonie Kayser for her guidance with Porteous' formula. We thank Elke Neuhaus and Luca Sodomaco for their valuable feedback on an earlier version of this paper.
\bibliographystyle{abbrv}
\bibliography{biblio}

\end{document}